\newcolumntype{L}{>{\raggedright\arraybackslash}X}
\numberwithin{equation}{section}
\newtheorem{theorem}{Theorem}[section]
\newtheorem{lemma}[theorem]{Lemma}
\newtheorem{remark}[theorem]{Remark}
\newtheorem{prop}[theorem]{Proposition}
\newtheorem{defin}[theorem]{Definition}
\newcommand{\cA}{\mathcal{A}}
\newcommand{\cT}{\mathcal{T}}
\newcommand{\R}{\mathbb{R}}
\newcommand{\N}{\mathbb{N}}
\DeclareMathOperator*{\Tr}{Tr}
\providecommand{\diag}{\mathrm{diag}}
\DeclareMathOperator*{\argmin}{argmin}
\renewcommand{\leq}{\leqslant}
\renewcommand{\geq}{\geqslant}
\providecommand{\angs}[2]{\left\langle{#1},{#2}\right\rangle}
\providecommand{\norm}[1]{\lVert{#1}\rVert}
\newcommand*{\prox}{\mathrm{prox}}
\newcommand{\blambda}{\bar{\lambda}}
\newcommand{\ulambda}{\underline{\lambda}}
\newcommand{\rec}{\mathrm{rec}}
\newcommand{\sparse}{\mathrm{sp}}
\newcommand{\lr}{\mathrm{lr}}
\newcommand{\walpha}{\widetilde{\alpha}}
\newcommand{\lhat}{\widehat{\lambda}}
\newcommand{\ltilde}{\widetilde{\lambda}}
\newcommand{\mtilde}{\widetilde{\mu}}
\newcommand{\gtilde}{\widetilde{\gamma}}
\newcommand{\stilde}{\widetilde{\sigma}}
\newcommand{\wt}[1]{\widetilde{#1}}
\def\blfootnote{\gdef\@thefnmark{}\@footnotetext}
\begin{document}
	 \title{Optimized methods for composite optimization: \\ a reduction perspective}
	\author{Jinho Bok \and Jason M. Altschuler}
	\date{Department of Statistics and Data Science, University of Pennsylvania \\[2ex] July 1, 2025}

	\maketitle
	
	\blfootnote{A preliminary version~\citep{proxgdsss} of this paper was accepted at the 38th Conference on Learning Theory (COLT 2025). This paper develops a general framework 
    that recovers the results of \citep{proxgdsss} as a special case.
    Email: \texttt{\string{jinhobok, alts\string}@upenn.edu}.} 
		\vspace{-0.5cm}
\begin{abstract}

Recent advances in convex optimization have leveraged computer-assisted proofs to develop \emph{optimized} first-order methods that improve over classical algorithms. 
However, each optimized method is specially tailored for a particular problem setting, and it is a well-documented challenge to extend optimized methods to other settings due to their highly bespoke design and analysis.
We provide a general framework that derives optimized methods for composite optimization \emph{directly} from those for unconstrained smooth optimization. The derived methods naturally extend the original methods, generalizing how proximal gradient descent extends gradient descent. The key to our result is certain algebraic identities that provide a unified and straightforward way of extending convergence analyses from unconstrained to composite settings. As concrete examples, we apply our framework to establish (1) the phenomenon of stepsize acceleration for proximal gradient descent;
(2) a convergence rate for the proximal optimized gradient method \citep{taylorcomposite} which is faster than FISTA \citep{fista}; (3) a new method that improves the state-of-the-art rate for minimizing gradient norm in the composite setting.

\end{abstract}

\newpage
\tableofcontents

\section{Introduction}
First-order methods such as gradient descent (GD) and Nesterov's accelerated gradient descent (AGD) \citep{agd} are widely popular in modern large-scale optimization. 
Recently, the optimization community has devoted significant effort to developing \emph{optimized} methods which have faster convergence rates than classical algorithms such as GD or AGD for fundamental settings such as unconstrained smooth convex optimization (i.e., $\min_x f(x)$ where $f$ is convex and smooth). A celebrated example is the \emph{optimized gradient method} (OGM) \citep{ogm}: in terms of worst-case convergence rate, this method is not only faster than AGD (by a constant factor), but moreover exactly achieves the best possible rate among all black-box first-order methods \citep{drori17}. Another notable example is \emph{stepsize-accelerated GD}, which incorporates time-varying stepsizes (but no momentum) and attains a provably faster rate than the traditional constant-stepsize GD \citep{altschulermsthesis,gdlongsteps, ap23a, ap23b, gsw24}.

However, it remains largely open whether optimized methods can be extended beyond the settings they were originally designed for.
A central challenge is that the design and analysis of optimized methods are heavily tailored to their particular settings and, as a result, often lack flexible structures or properties that are applicable to other settings. A related challenge is that the design and analysis of optimized methods are based on computer-assisted proofs which are typically difficult to interpret, let alone adapt to other settings. In contrast, simpler algorithms such as GD and AGD have been successfully extended to many settings \citep{karimi, gl16, cdhs17, bh18, km19,  garrigos2023handbook} due to versatile components of their design and analysis, for example the descent lemma~\citep{nesterovbook} and momentum~\citep{agd-ode, accelsurvey, fista, accel-lyapunov}. 
These challenges raise a fundamental question:

\begin{center}
    \emph{Is there a unified way to extend optimized methods to other settings?}
\end{center}

\subsection{Contribution}
We answer this question in the affirmative for the fundamental setting of \emph{composite convex optimization}, i.e., $\min_x f(x) + h(x)$ where $f$ is convex and smooth, and $h$ is convex (but potentially nonsmooth) and can be accessed via its proximal operator $\prox_{\alpha h}(x) = \argmin_z \{\alpha h(z) + \frac{1}{2}\norm{z-x}^2\}$. The composite setting is a strict generalization of unconstrained optimization (by letting $h = 0$) and constrained optimization (by letting $h = \iota_{\mathcal{K}}$ be the indicator of a constraint set $\mathcal{K}$), and captures regularization-based problems prevalent in machine learning, signal processing, and statistics \citep{boyd}.

Conceptually, we develop a reduction-style argument which shows that \emph{optimized methods in the unconstrained setting are also optimized in the composite setting}. This allows us to establish results of the following type:
if an algorithm $\cA$ for unconstrained convex optimization has convergence rate
\begin{align*}
    f(x_n) - f(x_*) &\leq \tau_n \norm{x_0 - x_*}^2\,,
\end{align*}
then there is a \emph{composite extension} $\cT(\cA)$ of this algorithm for composite optimization that has convergence rate
\begin{align*}
    f(x_n) + h(x_n) - f(x_*) - h(x_*) & \leq O(\tau_n) \norm{x_0 - x_*}^2\,.
\end{align*}
This reduction similarly applies to the alternative performance metric of gradient norm.
The new algorithm $\cT(\cA)$ extends the original algorithm $\cA$ in a simple and natural way that generalizes how proximal gradient descent extends gradient descent; see the technical overview section below.

We believe that this unified framework provides a useful viewpoint for studying optimized first-order methods, since it reduces the design and analysis of methods for one setting (composite) to another setting (unconstrained). This is in contrast to prior approaches, where the design and analysis of each optimized method are typically done in a case-by-case fashion for each particular setting; see the discussion of prior work in Section~\ref{subsec:priorworks}.

Our unified framework leads to convergence rates that are competitive with highly optimized algorithms and, in some cases, yields state-of-the-art complexity guarantees and answers open problems. As concrete examples, we apply our framework to establish the following:

\begin{table}
\centering
\begin{tabular}{lcc}
\multicolumn{3}{l}{\textbf{Unconstrained setting}} \\
\toprule
Algorithm $\setminus$ Performance Metric & Objective function & Gradient norm \\
\midrule
Stepsize-accelerated GD & $O(1/n^{\log_2 (1+\sqrt{2})})$ \citep{ap23b} & $O(1/n^{\log_2(1+\sqrt{2})})$ \citep{gsw24} \\
OGM(-G) & $O(1/n^2)$ \citep{ogm} & $O(1/n^2)$ \citep{ogm-g} \\
\cmidrule[\heavyrulewidth]{1-3}

\multicolumn{3}{c}{} \\[-0.3em] %
\multicolumn{3}{c}{\textbf{\textcolor{blue}{\Large $\Downarrow$ \normalsize Composite extension}} (Definition \ref{def:compositeextension})} \\
\multicolumn{3}{c}{} \\[-0.3em] %
\multicolumn{3}{l}{\textbf{Composite setting}} \\

\toprule
Algorithm $\setminus$ Performance Metric & Objective function & Gradient norm \\
\midrule
Stepsize-accelerated proximal GD & \textcolor{blue}{$O(1/n^{\log_2(1+\sqrt{2})})$ (Theorem \ref{thm:sss})} & \textcolor{blue}{$O(1/n^{\log_2(1+\sqrt{2})})$ (Theorem \ref{thm:sss-g})} \\
Proximal OGM(-G) & \textcolor{blue}{$O(1/n^2)$ (Theorem \ref{thm:ogm})} & \textcolor{blue}{$O(1/n^2)$ (Theorem \ref{thm:ogm-g})} \\
\bottomrule
\end{tabular}
\caption{\footnotesize Summary of applications. We develop a unified approach for extending optimized methods from unconstrained to composite settings, extending the asymptotic rate for each combination of algorithm and peformance metric.}\label{table:rates}
\end{table}

\begin{itemize}
    \item \textbf{Stepsize-based acceleration of proximal GD.} We answer the open questions of~\citep{ap23a, gdlongsteps} by showing that stepsize-based acceleration is possible for proximal GD. That is, we show that proximal GD can achieve accelerated rates with a judicious choice of stepsizes---without any other modifications to the algorithm (e.g., momentum). Previous results for stepsize-based acceleration were limited to the setting of GD for unconstrained smooth convex optimization, and it was unknown whether this phenomenon was possible in constrained or composite settings. See~\citep[Section 1]{proxgdsss} for a comprehensive discussion of this open problem and the challenges. We show a rate of $O(1/n^{\log_2(1+\sqrt{2})}) \approx O(1/n^{1.2716})$, which improves over the classical $O(1/n)$ guarantee that is tight for proximal GD with constant stepsizes \citep{tv23}. This asymptotic rate is conjecturally optimal even for the simpler setting of vanilla GD for unconstrained optimization~\citep{ap23a,ap23b,gsw24,gswcomposing}; hence, our rate is also conjecturally optimal. Appealingly, our framework enables us to use the same stepsizes that were developed for accelerating vanilla GD---an approach that is natural, yet was previously unclear how to analyze beyond the original setting.

    \item \textbf{Proximal OGM.} Applying our framework to OGM yields an accelerated rate of $O(1/n^2)$ with nearly-optimal constant factor. This rate is faster than all prior methods except OptISTA, an exactly optimal method with a computer-assisted design that was recently developed specifically for this setting~\citep{optista}.
    
    \item \textbf{Proximal OGM-G.} Applying our framework to OGM-G, a ``gradient norm'' version of OGM~\citep{ogm-g}, yields the state-of-the-art convergence rate for minimizing gradient norm in the composite setting. This rate improves over the previous best guarantee~\citep{hduality} by a factor of nearly $10$.

\end{itemize}
See Table \ref{table:rates} for a summary, and see Sections \ref{sec:app-func} and \ref{sec:app-grad} for formal statements and further details.

\subsection{Overview of framework}\label{subsec:overview}

Here we overview our reduction-style approach for extending first-order methods from unconstrained to composite settings. We focus on the main conceptual ideas here; see Section \ref{sec:mainresult} for formal statements. 

\paragraph*{Design of composite extension.} Let $\cA$ be a first-order method for the unconstrained setting. We propose a simple, unified way of designing an algorithm $\cT(\cA)$ for the composite setting. The derived algorithm $\cT(\cA)$ extends $\cA$ in a way that generalizes how proximal GD extends GD. To explain this, recall that proximal GD updates as $x_{t+1} = \prox_{\alpha_t h}(x_t - \alpha_t \nabla f(x_t))$. By definition of the proximal operator, this is equivalent to
\[x_{t+1} = x_t - \alpha_t(\nabla f(x_t) + s_{t+1}),   \text{ where } s_{t+1} \in \partial h(x_{t+1})\,.
\]
One can view this proximal GD update as GD, except with $\nabla f(x_t)$ replaced by $\nabla f(x_t) + s_{t+1}$. Our proposed extension $\cT(\cA)$ generalizes this: whenever a gradient iterate $\nabla f(x_t)$ appears in the original method $\cA$, we replace it with $\nabla f(x_t) + s_{t+1}$ for the composite setting. 

\paragraph*{Analysis of composite extension.} While the \emph{design} of these composite algorithms is simple and natural, its \emph{analysis} is nontrivial. This is the main content of the paper. 
Our starting point is existing analyses for optimized methods in the unconstrained setting. These have been driven by a powerful technique known as the \emph{performance estimation problem} (PEP) \citep{dt14}; in this framework, a dual solution of a certain semidefinite program provides a proof of a given algorithm's convergence rate. Specifically, in order to establish a convergence rate of the form $f(x_n) - f(x_*) \leq \tau_n \norm{x_0 - x_*}^2$ for an algorithm $\cA$, a dual solution to the semidefinite program provides multipliers $\lambda_{ij} \geq 0$ and a sum-of-squares (SOS) quadratic polynomial $P \geq 0$ such that the following identity holds:
\begin{equation}\label{eq:vanilladual}
	\tau_n\norm{x_0 - x_*}^2 - (f(x_n) - f(x_*)) = \sum_{i, j} \lambda_{ij} Q_{ij} + P\,,
\end{equation}
where $Q_{ij} \geq 0$ is a quadratic polynomial in the iterates $x_i,x_j$ and the first-order information of $f$ at these points. 
This identity certifies the desired convergence rate since the right hand side is nonnnegative.

Our analysis is based on a crucial observation in this template \eqref{eq:vanilladual}: \emph{optimized methods have simple solutions}. In particular, for unconstrained optimized algorithms, the sum-of-squares term is often just a single square---rather than the sum of multiple squares that cannot be collapsed into a single square. In other words, the corresponding quadratic form is of rank $1$. This core property was observed in \citep[Chapter 5]{taylorhabilitation} for many different types of optimized methods, and it is an interesting open question in itself to understand if this phenomenon hints at an underlying general theory for optimized methods; see also the discussion of future work in Section \ref{sec:discussion}. 

Surprisingly, this common structure alone is informative enough for us to characterize a (candidate) dual solution in the composite setting. In this setting, a PEP-based approach tries to find multipliers $\lambda_{ij} \geq 0, \mu_{ij} \geq 0$ and an SOS quadratic polynomial $P \geq 0$ such that the following identity holds:
\begin{equation}\label{eq:proxdual}
	\tau_n \norm{x_0 - x_*}^2 - (f(x_n) + h(x_n)- f(x_*) - h(x_*)) = \sum_{i, j} \lambda_{ij} Q_{ij}^f + \sum_{i, j}\mu_{ij}Q^h_{ij} + P\,,
\end{equation}
where now we have pairs of valid inequalities $Q_{ij}^f \geq 0$ and $Q_{ij}^h \geq 0$ for the first-order information at $f$ and $h$, respectively.
The key difficulty in this extended template \eqref{eq:proxdual} is identifying the solutions, i.e., the quantities $\lambda_{ij}$, $\mu_{ij}$, and $P$. This is in large part because of the additional complexity from $h$, which appears in~\eqref{eq:proxdual} but not in~\eqref{eq:vanilladual}. 
Finding such PEP solutions is a well-documented challenge; for example, the pioneering work on PEP for the composite setting \citep{taylorcomposite} writes:
\begin{center}
	\emph{``... algorithmic analyses using [PEP] are intrinsically limited by our ability to solve semidefinite problems, both numerically ... or analytically .... Therefore, any idea leading to (convex) programs that are easier to solve while maintaining reasonable guarantees would be very advantageous.''}
\end{center}

We overcome this challenge by providing closed-form expressions for the quantities
in the composite identity~\eqref{eq:proxdual} in terms of the 
quantities
in the unconstrained identity~\eqref{eq:vanilladual}. This lets us avoid solving \eqref{eq:proxdual} from scratch, and instead we borrow solutions from \eqref{eq:vanilladual}. In particular, an important starting point is that we use the \emph{same} $\lambda_{ij}$ from the unconstrained setting. While this approach is seemingly straightforward, the corresponding analysis is rather subtle since the corresponding terms---namely, $\lambda_{ij} Q_{ij}$ in \eqref{eq:vanilladual} and $\lambda_{ij} Q_{ij}^{f}$ in \eqref{eq:proxdual}---are \emph{not} equal. Indeed, $Q_{ij}^{f}$ has additional terms involving $h$ in the composite setting (see Definition~\ref{def:coco} for details). Our analysis establishes that these differences can be offset with a careful construction of the multipliers $\mu_{ij}$ and the sum-of-squares term, both of which are new and different from the multipliers $\lambda_{ij}$ and the sum-of-squares term in the unconstrained setting.

\paragraph*{Candidate solution.}
Altogether, this provides a general reduction-style approach of obtaining convergence guarantees, since the resulting formulae for $\lambda_{ij}$, $\mu_{ij}$, $P$ are explicit, satisfy the identity~\eqref{eq:proxdual}, and can be applied to any method with the aforementioned rank-$1$ property. 

\par We refer to this as a ``reduction-style approach'' since it is not an end-to-end reduction. In particular, one still needs to check the feasibility constraints\footnote{Feasibility of $\lambda_{ij} \geq 0$ is automatically guaranteed from the re-use of $\lambda_{ij}$ in our approach.} that $\mu_{ij} \geq 0$ and that $P$ is actually SOS. Verifying these two conditions must be done in an algorithm-specific manner but is conceptually simple because of the closed-form expressions. See Sections~\ref{sec:app-func} and~\ref{sec:app-grad} for multiple examples. We emphasize that this is much simpler than solving~\eqref{eq:proxdual} from scratch---since that requires solving for $\lambda_{ij}$, $\mu_{ij}$, $P$; verifying the identity~\eqref{eq:proxdual}; and checking the feasibility constraints $\lambda_{ij} \geq 0$, $\mu_{ij} \geq 0$, and that $P$ is SOS.

\paragraph*{Sum-of-squares structure.} We comment in particular on the SOS verification of $P$, since our framework provides a conceptually new approach for accomplishing this. Given $\lambda_{ij}$ and $\mu_{ij}$, $P$ is defined as the residual in the identity~\eqref{eq:proxdual}. Verifying that $P$ is SOS is in general a key challenge for PEP-based analyses~\citep{dt14, taylorcomposite, optista}, especially in the composite setting since then $P$ is typically of \emph{high rank}, i.e., any decomposition as a sum of squares (if one exists) requires $\Omega(n)$ squares. Finding such a decomposition is challenging, as that amounts to verifying positive semidefiniteness of the corresponding coefficient matrix, the entries of which have complicated algebraic expressions.

\par We provide a new approach for this verification: we combine certain aspects of the analysis for the unconstrained setting \eqref{eq:vanilladual} and the proximal point method \citep{taylorcomposite}. In the former, as mentioned, the matrix is rank-$1$. In the latter, while the matrix has high rank, it has a simple Laplacian structure which implies positive semidefiniteness. We show that modulo a Schur complement, the matrix in our analysis is a sum of a rank-$1$ matrix and a Laplacian matrix. Ultimately, this reduces checking positive semidefiniteness of $P$ to checking that an explicit rank-$1$ perturbation of a Laplacian matrix remains Laplacian; in our applications, this amounts to simply comparing only a couple entries. We remark that more generally, such a combination of structures from different base algorithms may be useful for PEP-type analyses.

\paragraph*{Performance metrics.} So far, we have focused on convergence rates that are measured in terms of suboptimality of the objective function. Our approach applies in a nearly identical manner when convergence rates are instead measured in terms of approximate stationarity, i.e., making the (sub)gradient norm small. We show results of the following type: if an algorithm $\cA'$ for unconstrained convex optimization has convergence rate
\begin{align*}
	\norm{\nabla f(x_n)}^2 &\leq \tau_n' (f(x_0) - f(x_*))\,,
\end{align*}
then the composite extension $\mathcal{T}(\mathcal{A'})$ has convergence rate
\begin{align*}
	\norm{\nabla f(x_n) + s_n}^2 & \leq O(\tau_n') (f(x_0) + h(x_0) - f(x_*) - h(x_*)) 
\end{align*}
for $s_n \in \partial h(x_n)$. 
Note that we use the same composite extension $\cT$ for these results. 
For this setting of gradient norm minimization, the two building blocks we use for the SOS verification have slightly different forms: for the unconstrained setting, the residual term is zero rather than a single square; and for the proximal point method, the corresponding matrix is diagonally dominant rather than Laplacian~\citep{gu2023tight}. The rest of the analysis is conceptually identical.

\subsection{Prior work}\label{subsec:priorworks}
\paragraph*{Performance estimation problem (PEP).} The  PEP framework, pioneered by~\citep{dt14}, formulates the worst-case performance (i.e., convergence rate) of a given algorithm $\cA$ as a semidefinite program. In this auxiliary optimization problem, the objective function corresponds to the performance metric for $\cA$, and the constraints are on the function class and the iterates being updated by $\cA$. The primal searches over worst-case problem instances, while the dual searches over proofs of the convergence rate as described in the previous section. In many settings of convex optimization, this formulation as a semidefinite program is \emph{tight} in that any proof for convergence rate can be expressed as a solution of the dual \citep{thg17}.

Among numerous applications of PEP (see e.g., \citep{accelsurvey, taylorcomposite}), there have been two main streams of work. One line of work establishes tight rates for existing algorithms, such as gradient descent \citep{dt14, pep-polyak, rotaru2024gdexact, kim2025gdexact}, proximal point/gradient method \citep{taylorcomposite, pep-proximal}, splitting methods \citep{pep-operator}, ADMM \citep{pep-admm}, and Chambolle-Pock \citep{pep-quadratic}, among others. Another line of work develops new optimized methods that are either asymptotically or exactly optimal in each setting, including: smooth (strongly) convex optimization \citep{ogm, ogm-g, item}, nonsmooth convex optimization \citep{kelley, subgdrate, pep-quadub}, composite optimization \citep{optista}, fixed-point iteration \citep{pep-fpi},  minimax optimization \citep{pep-minimax}, and monotone inclusion \citep{pep-monotone}. 
Our framework is situated at the interface of these two directions: it yields nearly tight rates by extending existing optimized methods and their proofs to a new setting. In particular, we provide explicit formulae for the PEP-based proofs (a challenge for the first line of work) and show how optimized methods can be naturally extended to a more general setting (a challenge for the second line of work).

An emerging area of study which uses PEP is stepsize-accelerated GD \citep{altschulermsthesis, daccachemsthesis, eloimsthesis, gdlongsteps, ap23a, ap23b, gsw24, rppa, zhangcomposing, gswcomposing, anytimeaccel}, which seeks to improve the rate of GD by only changing the stepsizes. Classically, this was only known to be possible for minimizing convex quadratics~\citep{young}. The recent line of work extends this improvement beyond the quadratic setting by using time-varying stepsize schedules that are nonmonotone and use exceedingly large steps. In particular, \citep{ap23b} showed the ``silver convergence rate'' $O(1/n^{\log_2 (1+\sqrt{2})})$ which is conjectured to be asymptotically optimal among all possible stepsize schedules. See \citep[Section 1.3]{proxgdsss} for a recent overview of this very active literature on stepsize-based acceleration.

\paragraph*{Composite optimization.} 
Composite optimization arises in many applications due to the flexibility afforded by the non-smooth component $h$---in particular as a regularization penalty. For example, in the common setting where $h$ is the $\ell_1$ norm, proximal GD is known as ISTA \citep{ista}. The seminal work of \citep{fista} introduced FISTA, a widely popular algorithm with $O(1/n^2)$ rate. Since then, different variants of FISTA have been proposed \citep{mfista, fistaanotherlook}; see also \citep[Chapter 10]{beckbook} for an overview. 
Notably, the recent paper \citep{optista} presented an optimized method OptISTA whose convergence rate is faster than FISTA and moreover exactly matches the lower bound for black-box first-order methods. 
Several works have also made progress in the task of gradient norm minimization in the composite setting \citep{fista-g, hduality}, achieving $O(1/n^2)$ rate; however, the optimal constant factor is unknown.

Our framework provides a general way to design and analyze methods in this setting. This leads to new algorithms as well as new rates for existing algorithms. Here, we further contextualize the applications of our framework. Our composite extension of OGM recovers POGM, a method that appeared in the original paper on PEP for composite optimization \citep{taylorcomposite}. However, that paper only presents numerical bounds from PEP; we provide rigorous convergence analyses here. We remark that while the design and analysis are simpler for POGM than OptISTA, it does not achieve the exactly optimal rate. This is not just an artefact of our technique and it was numerically observed that the POGM is suboptimal, though only by a small multiplicative factor of roughly $1.12$ \citep{taylorcomposite}. Our theoretical result achieves a rate for POGM with a mild multiplicative factor of roughly $1.29$ compared to the exactly optimal rate. Furthermore, our composite extension of OGM-G achieves the state-of-the-art rate, improving over the result of \citep[Section D.4]{hduality} by a factor of roughly $9.30$. For stepsize-based acceleration, previous results were known only for GD (in the unconstrained setting), and it was an open problem if this phenomenon extends to projected GD (in the constrained setting) or proximal GD (in the composite setting). This was posed as an open problem in \citep{ap23a, gdlongsteps}; see \citep[Section 1]{proxgdsss} for a detailed discussion of the challenges. We resolve this question by showing that the silver stepsizes for vanilla GD enable the same asymptotic rates for proximal GD.\footnote{Part of these results appeared in the preliminary conference version \citep{proxgdsss} of the present paper. The proofs and rates in \citep{proxgdsss} are mathematically equivalent to Theorem \ref{thm:sss} here, but our proof here is much simpler and based on the general framework developed in this paper. This framework also lets us extend these results both to minimizing gradient norm (Theorem \ref{thm:sss-g}), as well to analyzing momentum-based methods (Theorems~\ref{thm:ogm} and~\ref{thm:ogm-g}).
}

\paragraph*{Connections between algorithms.} A recent line of PEP-related work has investigated relations between different algorithms. For example, \citep{fista-g} identified a common geometric structure in accelerated algorithms, and \citep{flocktogether} identified a common property in accelerated minimax algorithms. \citep{factorsqrt2} established a connection between AGD and OGM, along with certain extensions of OGM. \citep{hduality, mirrorduality} developed the notions of H-duality and mirror duality, which are one-to-one correspondences between certain algorithms, one of which is designed for minimizing the objective function and the other for minimizing the gradient norm.

This paper shares some similarities with H-duality (and mirror duality)~\citep{hduality, mirrorduality}, in that they also reduce the design and analysis of one method to another method. However, the details are quite different. Most importantly, our results are orthogonal---in fact complementary---to theirs as we relate algorithms for \emph{different problem settings with the same performance metric}, whereas H-duality relates algorithms for \emph{the same problem setting with different performance metrics}. There are also other differences. H-duality provides a fully-fledged reduction whereas our framework is not fully black-box; however, our framework is general enough to apply to both stepsize-accelerated GD and OGM, which have markedly different proof structures (in terms of the multipliers $\{\lambda_{ij}\}$), while H-duality only applies to methods with specific proof structures.\footnote{A certain analog of H-duality for stepsize-accelerated GD recently appeared in a different work \citep[Section 3.1]{gswcomposing}. However, it only applies to specific choices of stepsizes.}

\section{Main results}\label{sec:mainresult}

\subsection{Composite extension and preliminaries}
Throughout, we consider first-order methods with iterates $x_0, x_1, \dots, x_n$, where $n$ is the total number of iterations. We let $x_*$ denote an optimal point for the relevant optimization problem. For a smooth convex function $f$ and convex function $h$, we use the shorthands $F:= f+h$, $f_i := f(x_i)$, $g_i := \nabla f(x_i)$, $h_i := h(x_i)$, $F_i := F(x_i)$, and we let $s_i$ denote a subgradient of $h$ at $x_i$, for $i \in \{0, 1, \dots, n, *\}$.\footnote{Unless otherwise specified, $*$ is always included when referring to the set of ``all indices''. $*$ is never included when indices are compared by their values (e.g., $i \geq 0, i \leq n$), and indices $*-1,  *+1$ are defined to be equal to $*$.} Without loss of generality (by normalization), the smoothness parameter of $f$ is assumed to be 1 throughout. Vectors are always vertical, and we denote the sum of the entries of a vector $v$ by $\sum v$.

Our result is stated for a general class of first-order methods. In the literature, this class is known as fixed-step first-order methods.

\begin{defin}[Stepsize matrix and first-order methods]\label{def:vanillamethod}An \emph{$n$-stepsize matrix $H$} is an upper triangular matrix indexed as
\[H := \begin{bmatrix}
    \alpha_{1, 0} & \dots & \alpha_{n, 0} \\
    & \ddots & \vdots \\
    & & \alpha_{n, n-1}
\end{bmatrix}\,,
\]
where $\alpha_{k, k-1} \neq 0$ for all $1 \leq k \leq n$. The \emph{($n$-step) first-order method with $H$} is defined as
\begin{align*}
    x_1 &= x_0 - \alpha_{1, 0}g_0, \\
    x_2 &= x_1 - \alpha_{2, 0}g_0 - \alpha_{2, 1}g_1, \\
    &\ \vdots \\
    x_n &= x_{n-1} - \alpha_{n, 0}g_0 - \dots - \alpha_{n, n-1}g_{n-1}.
\end{align*}
\end{defin}

The condition $\alpha_{k, k-1} \neq 0$ ensures that the iterates are not redundant (i.e., not a linear combination of previous iterates) and $H$ is invertible. 

Given a first-order method for the unconstrained setting (i.e., a stepsize matrix $H$), we define its \emph{composite extension} by replacing $g_t$ with $g_t + s_{t+1}$. 

\begin{defin}[Composite extension]\label{def:compositeextension} 
Consider a first-order method with $n$-stepsize matrix $H$. Its \emph{composite extension} is
    \begin{align*}
    x_1 &= x_0 - \alpha_{1, 0}(g_0+s_1), \\
    x_2 &= x_1 - \alpha_{2, 0}(g_0+s_1) - \alpha_{2, 1}(g_1+s_2), \\
    &\  \vdots \\
    x_n &= x_{n-1} - \alpha_{n, 0}(g_0+s_1) - \dots - \alpha_{n, n-1}(g_{n-1} + s_n).
\end{align*}
\end{defin}

\begin{remark}[Implementation] An equivalent, implementable form of the composite extension (Definition \ref{def:compositeextension}) that only involves gradient and proximal oracles is
\begin{align*}
    x_1 &= \prox_{\alpha_{1, 0}h}(x_0 - \alpha_{1, 0} g_0), \\
    s_1 &= \frac{1}{\alpha_{1, 0}}(x_0 - x_1) - g_0,\\
    x_2 &= \prox_{\alpha_{2, 1}h}(x_1 - \alpha_{2, 0}(g_0+s_1) - \alpha_{2, 1}g_1), \\
    s_2 &= \frac{1}{\alpha_{2, 1}}(x_1 - x_2 - \alpha_{2, 0}(g_0 + s_1)) - g_1, \\
    &\ \vdots \\
    x_n &= \prox_{\alpha_{n, n-1}h}(x_{n-1} - \alpha_{n, 0}(g_0 + s_1) - \dots - \alpha_{n, n-1}g_{n-1}).
\end{align*}
An issue for practical implementation is efficiency: as written, this algorithm requires storing all previous (sub)gradients. Notably, this issue is not specific to our approach and already exists in the unconstrained setting. All of the algorithms in our results can be implemented efficiently; see Sections \ref{sec:app-func} and \ref{sec:app-grad}. 
\end{remark}

Following PEP-based analyses, we make use of \emph{co-coercivities}, which form a complete set of inequalities for certifying convergence rates \citep{thg17}. In the unconstrained setting, we only have a single set of co-coercivities $\{Q_{ij}\}$, whereas in the composite setting we have two sets of co-coercivities $\{Q^f_{ij}\}$ and $\{Q^h_{ij}\}$, respectively corresponding to the first-order information of $f$ and $h$.
\begin{defin}[Co-coercivities]\label{def:coco} Let $H$ be a stepsize matrix and let $\{x_i\}$ be the iterates of the first-order method with $H$. Then define
\[Q_{ij} := f_i - f_j - \angs{g_j}{x_i - x_j} - \frac{1}{2}\norm{g_i - g_j}^2\,.
\]
For the iterates $\{x_i\}$ of the composite extension with $H$, define
\begin{align*}
    Q_{ij}^f &:= f_i - f_j - \angs{g_j}{x_i - x_j} - \frac{1}{2}\norm{g_i - g_j}^2, \\
    Q_{ij}^h &:= h_i - h_j - \angs{s_j}{x_i - x_j}.
\end{align*}
Note here that the iterates $\{x_i\}$ in $Q_{ij}$ and $Q^f_{ij}$ are different, since they are generated by the algorithms in Definition~\ref{def:vanillamethod} (unconstrained) and Definition~\ref{def:compositeextension} (composite), respectively.
\end{defin}

These co-coercivities are sometimes called ``valid inequalities'' since they are positive for any $f$ and $h$. 

\begin{lemma}[{\citep[Theorem 4]{thg17}}] Let $f$ be convex and $1$-smooth, and let $h$ be convex. Then $Q_{ij} \geq 0, Q_{ij}^f \geq 0$ and $Q_{ij}^h \geq 0$ for all $i, j$.
\end{lemma}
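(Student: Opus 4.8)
The statement to prove is that $Q_{ij} \geq 0$, $Q_{ij}^f \geq 0$, and $Q_{ij}^h \geq 0$ for all indices $i,j$ (including $*$). Since $Q_{ij}$ and $Q_{ij}^f$ have the same algebraic form (just evaluated at iterates of different algorithms), it suffices to establish two facts: (a) for any convex $1$-smooth $f$ and any points $x_i, x_j$ with $g_k = \nabla f(x_k)$, the quantity $f_i - f_j - \langle g_j, x_i - x_j\rangle - \tfrac12\norm{g_i - g_j}^2$ is nonnegative; and (b) for any convex $h$, any points $x_i, x_j$, and any $s_k \in \partial h(x_k)$, the quantity $h_i - h_j - \langle s_j, x_i - x_j\rangle$ is nonnegative. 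Both are standard; the result is explicitly attributed to \citep[Theorem 4]{thg17}, so the cleanest route is simply to invoke that interpolation/co-coercivity theorem. But I will also sketch the self-contained arguments.

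For (b): convexity of $h$ means precisely that $h(x_i) \geq h(x_j) + \langle s_j, x_i - x_j\rangle$ for every subgradient $s_j \in \partial h(x_j)$; rearranging gives $Q_{ij}^h \geq 0$ directly. The only point needing care is the index $*$: by the footnote conventions, indices $*-1, *+1$ collapse to $*$, and the subgradient $s_*$ is simply some element of $\partial h(x_*)$, so the same inequality applies with no special treatment. For the trivial case $i = j$ one gets $Q_{ii}^h = 0 \geq 0$.

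For (a): this is the classical co-coercivity (firm nonexpansiveness) inequality for gradients of convex $L$-smooth functions with $L=1$. The standard derivation starts from the two facts that $f$ is convex and $\nabla f$ is $1$-Lipschitz, equivalently that $x \mapsto \tfrac12\norm{x}^2 - f(x)$ is convex, and then uses the two inequalities obtained by comparing $f$ at $x_i$ and $x_j$ via the quadratic upper/lower bounds. Concretely, apply the descent-type bound to the auxiliary convex function $\phi(x) := \tfrac12\norm{x}^2 - f(x)$: $\phi(x_j) \geq \phi(x_i) + \langle \nabla\phi(x_i), x_j - x_i\rangle$ and the symmetric inequality at the swapped points, add them, and rearrange. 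Alternatively one can cite \citep[Theorem 4]{thg17} wholesale, which is exactly the route the paper takes — that theorem states these co-coercivities form a complete (necessary and sufficient) set of conditions for a finite set of triples to be interpolable by a convex $1$-smooth function, and in particular they are necessary, i.e., nonnegative.

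The only mild subtlety — and the step I would flag as the one to be careful about rather than genuinely hard — is the bookkeeping around the special index $*$ and the fact that the iterates appearing in $Q_{ij}$ versus $Q_{ij}^f$ are generated by two different algorithms (Definition \ref{def:vanillamethod} versus Definition \ref{def:compositeextension}). This distinction is irrelevant for the nonnegativity claim: the inequalities (a) and (b) hold for \emph{arbitrary} points in the domain, regardless of how those points were produced, so both $Q_{ij}$ and $Q_{ij}^f$ are covered by the same argument applied to whichever sequence of iterates is in force. Thus there is no real obstacle here; the lemma is a direct consequence of convexity of $h$ together with the standard interpolation inequality of \citep{thg17} for convex $1$-smooth functions.
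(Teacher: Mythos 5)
Your proposal is correct and takes the same route as the paper, which offers no proof of its own and simply invokes \citep[Theorem 4]{thg17}; your observation that nonnegativity holds for arbitrary points regardless of which algorithm generated them (and that the index $*$ needs no special treatment) is exactly the right reading. One small caveat on your self-contained sketch for part (a): adding the two inequalities for $\phi(x)=\tfrac12\norm{x}^2-f(x)$ at swapped points yields only the \emph{symmetrized} co-coercivity $\langle g_i-g_j,x_i-x_j\rangle\geq\norm{g_i-g_j}^2$, not the one-sided bound $Q_{ij}\geq 0$ individually (the standard derivation of the latter minimizes $x\mapsto f(x)-\langle g_j,x\rangle$ via a descent step), but this does not affect your main argument since you defer to the cited theorem.
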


\subsection{Algebraic recipes for composite extension}

We state our main technical result, starting with objective function as the performance metric for the convergence rates. As overviewed in Section \ref{subsec:overview}, our result begins with a certificate (i.e., a dual PEP solution) for the unconstrained setting \eqref{eq:vanilla-func}, which has a single square residual. From this, we establish a structured algebraic identity \eqref{eq:prox-func} which provides a candidate solution for the dual PEP in the composite setting. Once it is checked that this candidate is indeed a valid solution via its explicit formulae (see the remark below), we directly obtain a formal proof for the convergence rate.

\begin{theorem}[Composite extension for objective function minimization]\label{thm:main-func} For a first-order method with $H$ and its corresponding cocoercivities $\{Q_{ij}\}$, assume that there exist $\lambda = \{\lambda_{ij}\geq 0: i \in \{0, 1, \dots, n, *\}, j \in \{0, 1, \dots, n\}\}$ and $\gamma = [\gamma_0, \gamma_1, \dots, \gamma_n]$ such that
\begin{equation}\label{eq:vanilla-func}
    \sum_{i, j} \lambda_{ij}Q_{ij} + \frac{1}{2}\norm{x_0 - x_* - \sum_{i=0}^n \gamma_i g_i}^2 = R_n(f_* - f_n) + \frac{1}{2}\norm{x_0 - x_*}^2\,.
\end{equation}
Then for the composite extension with $H$ and its corresponding cocoercivities $\{Q_{ij}^f\}$ and $\{Q_{ij}^h\}$, there exist $\mu = \{\mu_{ij}: i \in \{1, \dots, n, *\}, j \in \{1, \dots, n\}\}, \sigma, S$ (explicitly stated in Definition \ref{def:coef-func}) such that
\begin{equation}\label{eq:prox-func}
\begin{split}
   \sum_{i, j} \lambda_{ij}Q^f_{ij} + \sum_{i, j} \mu_{ij}Q^h_{ij} + \frac{1}{2}\norm{x_0 - x_* - u
   }^2 + \frac{1}{2}\Tr(VSV^T) = R_n(F_* - F_n) + \frac{1}{2}(1 + \xi)\norm{x_0 - x_*}^2\,, 
\end{split}    
\end{equation}
where $V := \begin{bmatrix}
    x_0 - x_* | s_1 | \dots | s_n | s_*
\end{bmatrix}$ is the columnwise-concatenated matrix, $u := \sum_{i=0}^n \gamma_i(g_{i} + s_{i+1})  - \gamma_n s_{n+1} +\sum_{i \in \{1, \dots, n,*\}} \sigma_is_i $, and $S := \begin{bmatrix}
    \xi & v^T \\
    v & L
\end{bmatrix}$ with $\sum v = 0$ and $L$ being Laplacian.\footnote{A symmetric matrix is Laplacian if all nondiagonal entries are nonpositive and all row (column) sums are 0. Laplacian matrices are positive semidefinite, since $x^TLx = \sum_{i<j} (-L_{ij})(x_i - x_j)^2 \geq 0$.} In particular, if
\begin{itemize}
    \item[(i)] $\mu_{ij} \geq 0$ for all $i, j$
    \item[(ii)] $S$ is positive semidefinite (e.g., implied if $\xi = v^TL^{\dagger}v$)
\end{itemize}
then the composite extension with $H$ has the following convergence guarantee:
\[F_n - F_* \leq \frac{1+\xi}{2R_n}\norm{x_0 - x_*}^2\,.
\]
\end{theorem}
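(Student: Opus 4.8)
The plan is to derive the composite identity \eqref{eq:prox-func} by taking the unconstrained identity \eqref{eq:vanilla-func} as a starting point and accounting for the algebraic discrepancies introduced by the nonsmooth component $h$. First I would substitute the composite iterates (Definition~\ref{def:compositeextension}) into the expression $\sum_{i,j}\lambda_{ij}Q_{ij}^f$, using the \emph{same} $\lambda_{ij}$ as in the unconstrained certificate. The key observation is that $Q_{ij}^f$ has the same functional form as $Q_{ij}$, but the iterate differences $x_i - x_j$ now expand in terms of $g_k + s_{k+1}$ rather than $g_k$. Writing $x_i - x_j = -\sum_k \alpha_{k}(g_k + s_{k+1}) = (x_i - x_j)_{\text{unc}} - \sum_k \alpha_k s_{k+1}$ where $(x_i-x_j)_{\text{unc}}$ is the ``unconstrained part'', one sees that $\sum_{i,j}\lambda_{ij}Q_{ij}^f$ equals $\sum_{i,j}\lambda_{ij}Q_{ij}$ (evaluated formally with the composite iterates' first-order data) plus a collection of cross terms of the form $\lambda_{ij}\angs{g_j}{s_{k+1}}$. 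The goal is then to show that these cross terms, together with the additional terms from $\frac{1}{2}\norm{x_0-x_*-u}^2$ (which differs from the unconstrained square by the correction $-\gamma_n s_{n+1} + \sum_i \sigma_i s_i$) and the new $\frac{1}{2}\Tr(VSV^T)$ term, can be absorbed into $\sum_{i,j}\mu_{ij}Q_{ij}^h$ plus the change in the right-hand side from $f_* - f_n$ to $F_* - F_n = (f_*-f_n) + (h_*-h_n)$ and from $\norm{x_0-x_*}^2$ to $(1+\xi)\norm{x_0-x_*}^2$.

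The main structural step is to organize this bookkeeping so that $\mu_{ij}$, $\sigma$, and $S$ come out in closed form (this is the content of Definition~\ref{def:coef-func}, which I would verify rather than re-derive). Concretely: the terms $h_i - h_j$ appearing when we convert the ``$f$'' data to ``$F$'' data on the right-hand side must be matched by the $h_i - h_j$ parts of $Q_{ij}^h$; the linear-in-$s$ terms $\angs{s_j}{x_i-x_j}$ in $Q_{ij}^h$ must cancel the cross terms $\lambda_{ij}\angs{g_j}{s_{k+1}}$ and the $\angs{s_i}{\,\cdot\,}$ terms coming from $u$; and the purely quadratic-in-$s$ terms (i.e., $\angs{s_i}{s_j}$ and $\angs{x_0-x_*}{s_i}$) that are left over get collected into the quadratic form $\frac{1}{2}\Tr(VSV^T)$ with $V = [x_0-x_* \mid s_1 \mid \cdots \mid s_n \mid s_*]$. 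I would then check that the off-diagonal/row-sum structure forces $S$ to have the claimed block form $\begin{bmatrix} \xi & v^T \\ v & L\end{bmatrix}$ with $\sum v = 0$ and $L$ Laplacian — this is where the combination of the rank-$1$ structure from the unconstrained square and the Laplacian structure from the proximal-point analysis (as flagged in the overview) enters. Establishing \eqref{eq:prox-func} is thus a (lengthy but mechanical) matching of coefficients once the right definitions are in place.

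Given \eqref{eq:prox-func}, the final convergence guarantee is immediate: by the Lemma citing \citep{thg17}, $Q_{ij}^f \geq 0$ and $Q_{ij}^h \geq 0$ for all $i,j$ whenever $f$ is convex $1$-smooth and $h$ is convex; the term $\frac{1}{2}\norm{x_0-x_*-u}^2 \geq 0$ trivially; and under hypothesis (i) the multipliers $\mu_{ij} \geq 0$ so $\sum_{i,j}\mu_{ij}Q_{ij}^h \geq 0$, while under hypothesis (ii) $S \succeq 0$ gives $\frac{1}{2}\Tr(VSV^T) = \frac{1}{2}\sum_k v_k^T S v_k \geq 0$ (summing over the rows $v_k$ of $V^T$). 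Hence the left-hand side of \eqref{eq:prox-func} is a sum of nonnegative quantities, so
\[
R_n(F_* - F_n) + \tfrac{1}{2}(1+\xi)\norm{x_0-x_*}^2 \geq 0\,,
\]
which rearranges to $F_n - F_* \leq \frac{1+\xi}{2R_n}\norm{x_0-x_*}^2$, as claimed. For the parenthetical sufficient condition in (ii), I would note that $S = \begin{bmatrix}\xi & v^T \\ v & L\end{bmatrix} \succeq 0$ iff $L \succeq 0$ (true since $L$ is Laplacian), $v \in \operatorname{range}(L)$, and the Schur complement $\xi - v^T L^\dagger v \geq 0$; choosing $\xi = v^T L^\dagger v$ makes this hold with equality, and $v \in \operatorname{range}(L)$ follows from $\sum v = 0$ together with the fact that $\operatorname{range}(L)$ is exactly the orthogonal complement of the all-ones vector when the underlying Laplacian's graph is connected (or, in general, $v$ having zero sum on each connected component, which will be ensured by the construction).

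The step I expect to be the main obstacle is verifying the closed-form identity \eqref{eq:prox-func} itself — specifically, confirming that after collecting all $s$-dependent cross terms there is nothing ``left over'' outside the span of the $Q_{ij}^h$'s and the quadratic form $\Tr(VSV^T)$, and that the resulting $S$ genuinely has the Laplacian-plus-bordering structure rather than merely being symmetric. This requires carefully tracking how the stepsize matrix $H$ couples the $s_{t+1}$ terms across iterates and exploiting the specific algebraic role played by the hypothesis that the unconstrained residual is a \emph{single} square $\frac{1}{2}\norm{x_0-x_*-\sum_i \gamma_i g_i}^2$ (the rank-$1$ property); a sum of several squares would break the clean Laplacian structure. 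The positivity checks (i) and (ii) are deferred to the per-algorithm applications and are not part of proving this theorem.
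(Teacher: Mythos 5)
Your proposal follows essentially the same route as the paper's proof in Appendix~\ref{appendix:proof-main-func}: formally substitute $g_i \mapsto g_i + s_{i+1}$ into the unconstrained identity (the paper's ``pseudo-co-coercivities''), reuse the same $\lambda_{ij}$, absorb the resulting $s$-dependent correction terms into $\sum \mu_{ij}Q^h_{ij}$ and $\Tr(VSV^T)$ via coefficient matching organized by vector type, and verify $\sum v = 0$ and the Laplacian structure of $L$ from the row-sum relations implied by \eqref{eq:vanilla-func}; the final deduction of the rate from nonnegativity and the Schur-complement justification of $\xi = v^T L^\dagger v$ are also as in the paper. The only caveat is that you defer the coefficient-matching computations (which constitute most of Appendix~\ref{appendix:proof-main-func}) as ``mechanical,'' but the ideas and organization you describe are exactly those the paper uses to carry them out.
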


The main technical challenge in establishing this theorem is the formulae for the multipliers and sum-of-squares term. For ease of exposition, these formulae are provided later in Definition \ref{def:coef-func}, since this requires additional notation. Given these expressions, the proof is based on matching the coefficients (for linear forms of $\{f_i\}, \{h_i\}$ and quadratic forms of $x_0 - x_*, \{g_i\}, \{s_i\}$) in \eqref{eq:vanilla-func} and \eqref{eq:prox-func}. See Appendix \ref{appendix:proof-main-func} for details.

\begin{remark}[Interpretation and instantiation] 
As overviewed in Section~\ref{subsec:overview}, Theorem \ref{thm:main-func} implies that only two statements---items (i) and (ii)---need to be checked in order to obtain a convergence guarantee for the composite extension. This verification is tractable because $\mu, S$ are given in closed form. 

\par The expression for $\xi$ in Theorem~\ref{thm:main-func}, while providing the tightest possible rate using our framework, requires computing the pseudoinverse of $L$. In our applications, we bypass this by instead choosing $\xi > 0$ as a small constant which only inflates the rate slightly.

For checking (i), we can often use structural properties of $\lambda$ and $H$ from \eqref{eq:vanilla-func}. For checking (ii), the key point is that ``most of $S$'' is already positive semidefinite; note that $S$ is of dimension $(n+2) \times (n+2)$, and $L$ is a positive semidefinite submatrix of dimension $(n+1) \times (n+1)$. In this sense, it suffices to show that the Schur complement $L - \frac{1}{\xi}vv^T$ (i.e., a small perturbation of $L$) is positive semidefinite. A tractable approach for this is to show that it is Laplacian, as we already know that the row and column sums are 0 from $\sum v = 0$. See the applications in Section \ref{sec:app-func} for details and concrete examples of checking (i) and (ii).
    
\end{remark}

We also show an analogous result when the performance metric is the (sub)gradient norm. Here the algebraic structure is simpler, as we begin with the sum-of-squares term being $0$ in the unconstrained setting. 

\begin{theorem}[Composite extension for gradient norm minimization]\label{thm:main-grad} For a first-order method with $H$ and its corresponding cocoercivities $\{Q_{ij}\}$, assume that there exists $\lambda' = \{\lambda_{ij}' \geq 0: i, j \in \{0, 1, \dots, n\}\}$ such that
\begin{equation}\label{eq:vanilla-grad}
    \sum_{i, j} \lambda_{ij}'Q_{ij} = -\frac{R'_n}{2}\norm{g_n}^2 + f_0 - f_n\,.
\end{equation}
Then for the composite extension with $H$ and its corresponding cocoercivities $\{Q_{ij}^f\}$ and $\{Q_{ij}^h\}$, there exist $\mu' = \{\mu_{ij}': i \in \{0, \dots, n\}, j \in \{1, \dots, n\}\}, S'$ (explicitly stated in Definition \ref{def:coef-grad}) such that 
\begin{equation}\label{eq:prox-grad}
   \sum_{i, j} \lambda_{ij}'Q^f_{ij} + \sum_{i, j} \mu_{ij}'Q^h_{ij} + \frac{1}{2}\Tr(V'S'(V')^T) = -\frac{R'_n(1-\xi')}{2}\norm{g_n + s_n}^2 + F_0 - F_n\,,
\end{equation}
where $V' := \begin{bmatrix}
    g_n | s_1 | \dots | s_n
\end{bmatrix}$. Furthermore, if
\begin{itemize}
    \item[(i)] $\mu'_{ij} \geq 0$ for all $i, j$ 
    \item[(ii)] $S'$ is positive semidefinite (e.g., implied if $\xi' = 1 - \frac{\lambda'_{n-1, n} + \lambda'_{n, n-1}}{R'_n}$)
\end{itemize}
then the composite extension with $H$ has the following convergence guarantee:
\[\norm{g_n + s_n}^2 \leq \frac{2}{R_n'(1-\xi')}(F_0 - F_n)\,.
\]
\end{theorem}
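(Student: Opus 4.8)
\textbf{Proof proposal for Theorem~\ref{thm:main-grad}.}

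The plan is to mirror the structure of Theorem~\ref{thm:main-func} but exploit the simpler algebraic setup: the unconstrained certificate~\eqref{eq:vanilla-grad} has \emph{zero} residual square, so there is no rank-1 square to carry over, only the Laplacian-type (here, diagonally dominant) contribution coming from the proximal point method's inequalities. First I would write down the target identity~\eqref{eq:prox-grad} as an equality between two quadratic-plus-linear forms in the ``primal variables'' $f_i, h_i$ (appearing linearly) and $g_n, s_1, \dots, s_n$ (appearing quadratically, collected in $V'$), together with the inner products $\langle g_i, x_j - x_k\rangle$ etc.\ that get substituted via the first-order-method recursion. The composite extension replaces $g_t$ by $g_t + s_{t+1}$ in every update, so the iterates $x_i$ of the composite method are affine functions of $g_0, \dots, g_{n-1}, s_1, \dots, s_n$ with the \emph{same} coefficient matrix $H$; this is the key bookkeeping fact that makes the reduction work. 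I would then substitute these expressions into $Q_{ij}^f$ and $Q_{ij}^h$ and compare, term by term, with the substituted form of the unconstrained identity~\eqref{eq:vanilla-grad}.

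The core of the argument is the definition of $\mu'$ and $S'$ (deferred to Definition~\ref{def:coef-grad}) and the verification that, with those choices, every coefficient matches. Concretely: (a) the linear-in-$f_i$ coefficients in~\eqref{eq:prox-grad} must reproduce exactly the $f_0 - f_n$ on the right, which follows because we reuse $\lambda'_{ij}$ verbatim and the $Q^f_{ij}$ have the same $f$-linear part as $Q_{ij}$; (b) the linear-in-$h_i$ coefficients must assemble into $h_0 - h_n$, which pins down the ``telescoping'' structure that $\mu'$ must satisfy and is where the proximal-point-method multipliers enter; (c) the quadratic part in $g_n$ alone must equal $-\frac{R_n'(1-\xi')}{2}\|g_n\|^2$ plus cross terms, and (d) all remaining quadratic terms—those involving the $s_i$'s and the cross terms $\langle g_n, s_j\rangle$—must be absorbed into $\tfrac12\Tr(V'S'(V')^T)$ with $-\tfrac{R_n'(1-\xi')}{2}\|g_n+s_n\|^2$ extracted. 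Matching (c)+(d) is exactly what forces the stated form of $S'$: its $(1,1)$ entry, its first row/column, and the $(n,n)$-ish block encode the $-\tfrac{R_n'(1-\xi')}{2}\|g_n+s_n\|^2$ term, while the rest is the diagonally dominant matrix coming from the proximal point inequalities (the analogue of the Laplacian $L$ in the function-value case, cf.\ \citep{gu2023tight}). Once the identity~\eqref{eq:prox-grad} is established, the final convergence bound is immediate: under (i) we have $\sum \mu'_{ij} Q^h_{ij} \geq 0$, under (ii) we have $\tfrac12\Tr(V'S'(V')^T) \geq 0$, and $\sum \lambda'_{ij} Q^f_{ij} \geq 0$ always; hence the left side of~\eqref{eq:prox-grad} is $\geq 0$, giving $\tfrac{R_n'(1-\xi')}{2}\|g_n+s_n\|^2 \leq F_0 - F_n$, which is the claim. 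The parenthetical ``$\xi' = 1 - \frac{\lambda'_{n-1,n}+\lambda'_{n,n-1}}{R_n'}$ implies (ii)'' should be checked by noting that this choice makes the relevant Schur complement of $S'$ a diagonally dominant (hence PSD) matrix, analogous to the Laplacian check in Theorem~\ref{thm:main-func}.

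I expect the main obstacle to be step (d): isolating precisely which combination of $s_i$-quadratic terms and $\langle g_n, s_j\rangle$ cross terms can be packaged as $-\tfrac{R_n'(1-\xi')}{2}\|g_n+s_n\|^2$ plus a PSD remainder, and verifying that the leftover remainder is genuinely diagonally dominant rather than merely symmetric. This requires carefully tracking how the stepsize matrix $H$ propagates the $s_i$'s through the iterates $x_i$ inside the $\langle s_j, x_i - x_k\rangle$ terms of $Q^h_{ij}$, and checking that the resulting quadratic form in $(g_n, s_1, \dots, s_n)$ has exactly the claimed structure. A secondary subtlety is the indexing mismatch: in~\eqref{eq:vanilla-grad} the indices run over $\{0,\dots,n\}$ with no $*$, whereas the composite co-coercivities $Q^h_{ij}$ involve $s_i$ for $i \in \{1,\dots,n\}$ (and one must be careful that $x_*$ and $s_*$ do not appear spuriously—indeed $V'$ omits them, unlike $V$ in Theorem~\ref{thm:main-func}). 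Handling these index conventions consistently, especially the convention that $s_{n+1}$ does not appear here, is where most of the routine-but-error-prone work lies; the conceptual content is entirely in the closed-form construction of $\mu'$ and $S'$.
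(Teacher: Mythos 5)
Your proposal follows essentially the same route as the paper's proof: reuse $\lambda'$ verbatim, exploit that the composite iterates are affine in $g_0+s_1,\dots,g_{n-1}+s_n$ with the same coefficient matrix $\widetilde{H}$, match coefficients of the linear forms in $\{f_i\},\{h_i\}$ and of the quadratic forms in $(g_n, s_1,\dots,s_n)$ to pin down $\mu'$ and $S'$ (the paper does this via ``pseudo-co-coercivities'' obtained by the substitution $g_i \mapsto g_i + s_{i+1}$), and conclude from nonnegativity of each term. The only minor deviation is in item (ii): the paper does not pass through a Schur complement here but shows directly that the first summand of $S'$ is diagonally dominant and that the rank-one subtraction $-R_n'(1-\xi')(e_1+e_{n+1})(e_1+e_{n+1})^T$ preserves diagonal dominance precisely when the $(1,n+1)$ entry $\lambda'_{n-1,n}+\lambda'_{n,n-1}-R_n'(1-\xi')$ stays nonnegative, which is what the stated choice of $\xi'$ guarantees.
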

The proof is similar to that of Theorem~\ref{thm:main-func} and is provided in Appendix \ref{appendix:proof-main-grad}. Note that whereas the positive semidefinite matrix $S$ in Theorem~\ref{thm:main-func} has Laplacian structure, the matrix $S'$ in Theorem~\ref{thm:main-grad} has diagonally dominant structure.\footnote{A symmetric matrix is diagonally dominant if for each row/column, the absolute value of the diagonal entry is at least the sum of the absolute values of the nondiagonal entries. For example, Laplacian matrices are diagonally dominant. Diagonally dominant matrices are positive semidefinite by the Gershgorin circle theorem.}

\begin{remark}[Performance metric for gradient norm minimization]
    In the literature on gradient norm minimization, results are often stated with respect to $f_0 - f_*$ instead of $f_0 - f_n$ (or in the composite setting, $F_0 - F_*$ rather than $F_0 - F_n$). It is easy to see that bounds with respect to the latter imply corresponding bounds with respect to the former. Furthermore, often the converse is also true, meaning that these performance metrics are essentially equivalent.\footnote{Details: combining \eqref{eq:vanilla-grad} with the inequality $Q_{n*} = f_n - f_* - \frac{1}{2}\norm{g_n}^2 \geq 0$ yields $-\frac{R'_n+1}{2}\norm{g_n}^2 + f_0 - f_* \geq 0$; combining \eqref{eq:prox-grad} with the inequality $Q^f_{n*} + Q^h_{n*} = F_n - F_* - \frac{1}{2}\norm{g_n + s_*}^2 \geq 0$ yields $-\frac{R'_n(1-\xi')}{2}\norm{g_n + s_n}^2 + F_0 - F_* \geq 0$. In many cases, the bounds with respect to $f_0 - f_*$ (or $F_0 - F_*$) are obtained precisely in this way. See also, for example, \citep[equation 3]{hduality}.} We state our results with respect to $f_0 - f_n$ and $F_0 - F_n$ because this yields slightly simpler formulations and also applies to settings where no finite minimizer exists.

    \end{remark}

\begin{remark}[Corollary for related performance metric]
    A standard trick is that by running methods for minimizing objective function and minimizing gradient norm, each for $n/2$ iterations, one obtains a final convergence rate for $n$ steps---comparing gradient norm to \emph{initial distance} (both squared)---which is the product of the two constituent rates, see e.g., \citep{fista-g}. For example, by using the composite extensions of OGM and OGM-G,
    \begin{align*}
        \norm{g_n + s_n}^2 &\leq O\left(\frac{1}{(n/2)^2}\right)(f(x_{n/2}) + h(x_{n/2}) - f(x_*) - h(x_*)) \\
        &\leq O\left(\frac{1}{(n/2)^2}\right) \times O\left(\frac{1}{(n/2)^2}\right) \norm{x_0 - x_*}^2 = O\left(\frac{1}{n^4}\right)\norm{x_0 - x_*}^2\,.
    \end{align*}
    Similarly, for stepsize-accelerated proximal GD, our results yield a rate of $O(1/n^{(\log_2 (1+\sqrt{2}))^2}) \approx O(1/n^{1.6168})$ for the final gradient norm compared to the initial distance.
\end{remark}

\subsection{Notation}

To express convergence rates in a simpler form, we often use the standard notation $a_n \sim b_n$ if $\lim_n (a_n/b_n) = 1$. We write $[A]_{i, j}$ to denote the $(i, j)$ entry of the matrix $A$. 
\par It is also convenient to introduce some notation for triangular matrices corresponding to stepsize parameters. We define $U(a_1, \dots, a_n)$ to be the $n \times n$ upper triangular matrix whose $(i, j)$ entry is $a_i$ if $i = j$ and $1$ if $i < j$. We denote $U(\bm{1}_n)$ as $U_n$; this is the $n \times n$ upper triangular matrix with $1$ in every entry on or above the diagonal. For a $n$-stepsize matrix $H$, we define $\widetilde{H} := HU_n$, i.e., 
\[\widetilde{H} = \begin{bmatrix}
    \widetilde{\alpha}_{1, 0} & \dots & \widetilde{\alpha}_{n, 0} \\
    & \ddots & \vdots \\
    & & \widetilde{\alpha}_{n, n-1}
\end{bmatrix}\,,
\]
where $\walpha_{i, j} := \sum_{k=j+1}^i \alpha_{k, j}$. Then it can be observed that the iterates $x_0, \dots, x_n$ of the first-order method with $H$ satisfy
\begin{align*}
    x_1 &= x_0 - \widetilde{\alpha}_{1, 0}g_0, \\
    x_2 &= x_0 - \widetilde{\alpha}_{2, 0}g_0 - \widetilde{\alpha}_{2, 1}g_1, \\
    &\ \ \vdots \\
    x_n &= x_0 - \widetilde{\alpha}_{n, 0}g_0 - \dots - \widetilde{\alpha}_{n, n-1}g_{n-1}.
\end{align*}

\section{Formulae for multipliers}\label{sec:algebraicform}

In this section, we formally define the multipliers in Theorems \ref{thm:main-func} and \ref{thm:main-grad}. We begin by introducing notation for certain simple linear transformations of the coefficients $\lambda$ and $\mu$ that frequently appear in our analysis. These definitions are stated for $\lambda$ and $\mu$, and apply analogously to $\lambda'$ and $\mu'$.

\begin{defin}[Multipliers] For $\lambda = \{\lambda_{i,j}: i \in \{0, 1, \dots, n, *\}, j \in \{0, 1, \dots, n\}\}$ and $\mu = \{\mu_{i,j}: i \in \{1, \dots, n, *\}, j \in \{1, \dots, n\}\}$, define
\[\lambda_{i, \bullet} := \sum_j \lambda_{i, j}, \quad \lambda_{\bullet, j} := \sum_i \lambda_{i, j}, \quad \mu_{\bullet, j} := \sum_i \mu_{i, j}\,, \\
\]
and
\begin{align*}
	\lhat &:= \begin{bmatrix}
		-(\lambda_{\bullet, 0} + \lambda_{0, \bullet}) & \dots & \lambda_{0, n-2} + \lambda_{n-2, 0} & \lambda_{0, n-1} + \lambda_{n-1, 0} \\
		\vdots & \ddots & \vdots & \vdots \\
		\lambda_{n-2, 0} + \lambda_{0, n-2} & \dots & -(\lambda_{\bullet, n-2} + \lambda_{n-2, \bullet}) & \lambda_{n-2, n-1} + \lambda_{n-1, n-2} \\
		\lambda_{n-1, 0} + \lambda_{0, n-1} & \dots & \lambda_{n-1, n-2} + \lambda_{n-2, n-1} & -(\lambda_{\bullet, n-1} + \lambda_{n-1, \bullet})
	\end{bmatrix} \in \R^{n \times n}, \\
	\ltilde &:= \begin{bmatrix}
		\lambda_{1, 0} & -\lambda_{\bullet, 1} & \lambda_{1, 2} & \dots & \lambda_{1, n-1} \\
		\lambda_{2, 0} & \lambda_{2, 1} & -\lambda_{\bullet, 2} & \dots & \lambda_{2, n-1} \\
		\vdots & \vdots & \ddots & \ddots & \vdots \\ 
		\lambda_{n-1, 0} & \lambda_{n-1, 1} & \dots & \lambda_{n-1, n-2} & -\lambda_{\bullet, n-1} \\
		\lambda_{n, 0} & \lambda_{n, 1} & \dots & \lambda_{n, n-2} & \lambda_{n, n-1}
	\end{bmatrix} \in \R^{n \times n}, \\
	\mtilde &:= \begin{bmatrix}
		-\mu_{\bullet, 1} & \dots & \mu_{1, n} \\
		\vdots & \ddots & \vdots \\
		\mu_{n, 1} & \dots & -\mu_{\bullet, n}
	\end{bmatrix} \in \R^{n \times n}.
\end{align*}
Similar notations are also defined with respect to $\lambda' = \{\lambda_{i, j}': i, j \in \{0, 1, \dots, n\}\}$ and $\mu' = \{\mu_{i, j}': i \in \{0, \dots, n\}, j \in \{1, \dots, n\}\}$.
\end{defin}

\begin{remark}[Reparameterization] Below, it is convenient to define $\mtilde$ and $\mtilde'$ respectively instead of $\mu$ and $\mu'$ (which are the actual multipliers in Theorems \ref{thm:main-func} and \ref{thm:main-grad}). 
	This is equivalent due to the one-to-one correspondence: given $\mtilde$, $\mu_{i, j} = 
	e_i^T \mtilde e_j \cdot \bm{1}_{\{i \neq j\}}$ uniquely defines $\mu$; similarly, given $\mtilde'$, $\mu_{i, j}' = 
	e_i^T \mtilde' e_j \cdot \bm{1}_{\{i \neq j\}}$ uniquely defines $\mu'$, where we use the shorthand $e_0:= -\bm{1}_n$ and $e_*:= -\bm{1}_n$.
\end{remark}

\subsection{Objective function minimization}\label{subsec:form-func}
In the setting of objective function minimization (Theorem \ref{thm:main-func}), the new coefficients $\sigma, \mu, S$ for the composite setting are formally defined as follows.

\begin{defin}\label{def:coef-func} In the setting of Theorem \ref{thm:main-func}, define $\sigma$ as
\begin{equation}\label{eq:sigma}
    \sigma := [\sigma_1, \dots, \sigma_n, \sigma_*], \quad \sigma_i := \frac{\lambda_{i-1, n} + \lambda_{n, i-1}}{\gamma_n}\,.
\end{equation}
For notational convenience, we also define $\gtilde := [\gamma_0, \dots, \gamma_{n-1}], \stilde := [\sigma_1, \dots, \sigma_n]$. Then $\mu, S$ are defined as
    \begin{align}
        \mtilde &:= -\widetilde{H}^{-1}((\widetilde{H}\ltilde)^T + \widetilde{\gamma}(\widetilde{\gamma} + \widetilde{\sigma})^T) = \widetilde{H}^{-1}(\lhat - \gtilde \stilde^T) + \ltilde,
        \label{eq:mutilde-func} \\
        S &:= \begin{bmatrix}
            \xi & v^T \\
            v & L
        \end{bmatrix}, \quad v := [v_1, \dots, v_n, v_*], \quad v_i := \sigma_i + \lambda_{*, i-1} - \mu_{*, i}, \quad L := -\begin{bmatrix}
      \lhat & \gtilde \\
      \gtilde^T & -\lambda_{*, \bullet}
    \end{bmatrix} - \sigma \sigma^T \label{eq:slack-func}.
    \end{align}    
\end{defin}
Note that the equality in \eqref{eq:mutilde-func} is due to \eqref{eq:vanilla-func}; for details, see \eqref{eq:vanilla-func-quad-mat}.

\subsection{Gradient norm minimization}\label{subsec:form-grad}

In the setting of gradient norm minimization (Theorem \ref{thm:main-grad}), the new coefficients $\mu', S'$ for the composite setting are formally defined as follows.

\begin{defin}\label{def:coef-grad} In the setting of Theorem \ref{thm:main-grad}, define $\mu', S'$ as
    \begin{align}
        \mtilde' &:= -\widetilde{H}^{-1}(\widetilde{H}\ltilde')^T, \label{eq:mutilde-grad} \\
        S' &:= \begin{bmatrix}
        R_n' & (v')^T \\
        v' & -\lhat'
    \end{bmatrix} - R'_n(1-\xi')(e_1 + e_{n+1})(e_1 + e_{n+1})^T, \quad v' := [v'_1, \dots, v'_n], \quad v'_i := \lambda_{i-1, n}' + \lambda_{n, i-1}' .\label{eq:slack-grad}
    \end{align}
\end{defin}

\section{Applications to objective function minimization}\label{sec:app-func}
Here we apply our main theorem for objective function minimization (Theorem \ref{thm:main-func}) to the composite extensions of stepsize-accelerated gradient descent and optimized gradient method. For each result, we only need to verify items (i) and (ii) of the theorem. The following subsections do this by using the formulae for the multipliers in Definition \ref{def:coef-func}.

\subsection{Stepsize-accelerated proximal GD}
Gradient descent (GD) is a first-order method with diagonal stepsize matrix $H$. Its composite extension is proximal GD with the same stepsizes.

For $k \in \N$ and $n = 2^k-1$, the silver stepsize schedule $\pi^{(k)}$ (of length $n$) is defined as
\begin{align*}
    \pi^{(k)}_i &:= 1+\rho^{\nu(i)-1} \text{ for } 1 \leq i \leq n\,,
\end{align*}
where $\rho := 1+\sqrt{2}$ is the silver ratio and $\nu(i)$ is the largest integer $j$ such that $2^j$ divides $i$. This can be equivalently defined in a recursive way as $\pi^{(1)} := [\sqrt{2}]$ and $\pi^{(k+1)} := [\pi^{(k)}, \rho^{k-1} + 1, \pi^{(k)}]$. The silver stepsize schedule deviates qualitatively from mainstream stepsize schedules: it is time-varying, nonmonotone, fractal-like, and uses arbitrarily large stepsizes that are in particular larger than $2$ (the threshold at which constant stepsize schedules make GD divergent). See Figure~\ref{fig:sss} for an illustration.

\begin{figure}
	\begin{minipage}[c]{0.4\textwidth}
		\includegraphics[width=\textwidth]{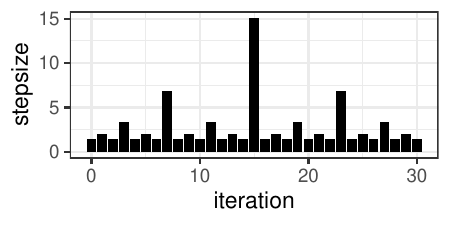}
	\end{minipage}
	\begin{minipage}[l]{0.6\textwidth}
		\caption{\footnotesize First $31$ stepsizes of the silver stepsize schedule. The stepsizes are time-varying and fractal-like.} \label{fig:sss}
	\end{minipage}
\end{figure}

\par ~\citep{ap23b} introduced the silver stepsizes and proved the (partially) accelerated rate of $O(1/n^{\log_2 \rho})$. The following choice of $\lambda$ and $\gamma$ from \citep[Theorem 5.2]{rppa} certifies this asymptotic rate with tight constant factor. The recursive definition of $\lambda$, in essence, combines the proofs for the constitutent stepsize schedules in the recursive definition of $\pi$---a technique known as \emph{recursive gluing} \citep{ap23a, ap23b}.

\begin{prop}[Multipliers for unconstrained GD; {\citep[Theorem 5.2]{rppa}}]\label{prop:sssknown} For $k \in \N$ and $n = 2^k-1$, GD with the silver stepsizes satisfies \eqref{eq:vanilla-func} with
\begin{align*}
    \lambda &= \lambda^{(k)}, \\
    \gamma &= [\pi^{(k)}, \rho^k], \\
    R_n &= 2\rho^k-1.
\end{align*}
Here, $\lambda^{(k)}$ is recursively defined as
\[\lambda_{i, j}^{(k)} := \blambda_{i, j}^{(k)}\bm{1}_{\{i \neq *\}} + \ulambda_j^{(k)}\bm{1}_{\{i = *\}}\,,
\]
with $\ulambda^{(k)} := [\pi^{(k)}, \rho^k]$ and
\begin{align*}
    \blambda^{(1)} &:= \ \begin{bNiceArray}{cc}[first-row, first-col]
        & 0 & 1 \\
        0 & 0 & \rho \\
        1 & 1 & 0 \\
    \end{bNiceArray}, \\
    \blambda^{(k+1)} &:= \underbrace{\blambda^{(k+1), \rec}}_{\text{recursion}} \;\;\;+ \underbrace{\blambda^{(k+1), \sparse}}_{\text{sparse correction}} + \underbrace{\blambda^{(k+1), \lr}}_{\text{low-rank correction}},
\end{align*}
where $\blambda^{(k+1), \rec}, \blambda^{(k+1), \sparse}, \blambda^{(k+1), \lr}$ are defined as
\begin{align*}
    \blambda^{(k+1), \rec}_{i, j} &:= \blambda^{(k)}_{i, j} \bm{1}_{\{0 \leq i, j \leq n\}} + \rho^2 \blambda^{(k)}_{i-n-1, j-n-1}\bm{1}_{\{n+1 \leq i, j \leq 2n + 1\}}, \\
    \blambda^{(k+1), \sparse}_{i, j} &:= \rho \bm{1}_{\{(i, j) = (n, 2n+1)\}} + \rho^k \bm{1}_{\{(i, j) = (2n+1, n)\}}, \\
    \blambda^{(k+1), \lr}_{i, j} &:= \rho \pi^{(k)}_{j-n} \bm{1}_{\{i = n, n+1 \leq j \leq 2n\}} + \rho \pi^{(k)}_{j-n} \bm{1}_{\{i = 2n+1, n+1 \leq j \leq 2n\}}.
\end{align*}
\end{prop}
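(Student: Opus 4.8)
Since Proposition~\ref{prop:sssknown} is a restatement of \citep[Theorem 5.2]{rppa}, I would reconstruct its proof by the \emph{recursive gluing} technique of \citep{ap23a, ap23b}, exploiting the fact that the definitions of $\pi^{(k)}$ and $\lambda^{(k)}$ recurse in parallel. First I would split the target identity \eqref{eq:vanilla-func} into two pieces by collecting coefficients: a \emph{linear part}, equating the coefficients of $f_0, \dots, f_n, f_*$ on the two sides, and a \emph{quadratic part}, an identity between quadratic forms in $x_0 - x_*$ and $g_0, \dots, g_n$ (the matrix reformulation \eqref{eq:vanilla-func-quad-mat}). The linear part is light bookkeeping: using $Q_{ij} = f_i - f_j - \cdots$, it amounts to $\lambda^{(k)}_{m,\bullet} = \lambda^{(k)}_{\bullet,m}$ for $0 \le m \le n-1$, $\lambda^{(k)}_{\bullet,n} - \lambda^{(k)}_{n,\bullet} = R_n$, and $\sum_j \lambda^{(k)}_{*,j} = R_n$, all of which follow from $\sum \pi^{(k)} = \rho^k - 1$ (an easy induction using $\rho^2 = 2\rho + 1$) and hence $\sum \ulambda^{(k)} = 2\rho^k - 1$. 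The substance is the quadratic part, which I would prove by induction on $k$.

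For the base case $k = 1$ ($n = 1$, stepsize $\sqrt 2$, $\gamma = [\sqrt 2, \rho]$, $R_1 = 2\rho - 1$), one substitutes the explicit $3 \times 3$ matrix $\blambda^{(1)}$ into both sides of \eqref{eq:vanilla-func}, expands using the update $x_1 = x_0 - \sqrt 2\, g_0$, and checks the finitely many resulting coefficients, which agree by repeated use of $\rho^2 = 2\rho + 1$. For the inductive step, I would write $\pi^{(k+1)} = [\pi^{(k)}, \rho^{k-1}{+}1, \pi^{(k)}]$ and decompose the $(2n{+}1)$-step trajectory into a first block $x_0, \dots, x_n$ (GD with $\pi^{(k)}$), a single long step to $x_{n+1}$, and a second block $x_{n+1}, \dots, x_{2n+1}$ (again GD with $\pi^{(k)}$, translated by $n{+}1$). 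Applying the inductive hypothesis to each block yields two instances of \eqref{eq:vanilla-func}, one for the iterates $x_0, \dots, x_n$ and one for $x_{n+1}, \dots, x_{2n+1}$ (with indices shifted). Combining these two identities with the scalings dictated by $\blambda^{(k+1), \rec}$—together with the additional cocoercivity terms indexed by $\blambda^{(k+1),\sparse}$ and $\blambda^{(k+1),\lr}$—should reproduce \eqref{eq:vanilla-func} for $k+1$, once one checks that the leftover square collapses to $\tfrac12\lVert x_0 - x_* - \sum_i \gamma^{(k+1)}_i g_i\rVert^2$ with $\gamma^{(k+1)} = [\pi^{(k+1)}, \rho^{k+1}]$ and that the scalar recursion gives $R_{2n+1} = 2\rho^{k+1} - 1$.

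The main obstacle is the gluing at the junction, which is precisely what the sparse and low-rank corrections exist to handle. The residual square $\tfrac12\lVert x_0 - x_* - \sum_i \gamma^{(k)}_i g_i\rVert^2$ from the first block must be merged with the initial-distance term $\tfrac12\lVert x_{n+1} - x_*\rVert^2$ coming from the second block's identity; expanding and recombining these squares generates cross terms between the gradients $g_0, \dots, g_{n-1}$ of the first block (equivalently $x_0 - x_n$, via the update rule) and the gradients $g_{n+1}, \dots, g_{2n}$ of the second block, plus terms involving the long step. The low-rank correction—the entries $\rho\,\pi^{(k)}_{j-n}$ with $i \in \{n, 2n{+}1\}$—and the sparse correction—the two entries at $(n, 2n{+}1)$ and $(2n{+}1, n)$—are engineered to absorb exactly these cross terms. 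Verifying this cancellation, and that what remains reassembles into the claimed single square with the claimed $\gamma^{(k+1)}$ and $R_{2n+1}$, is mechanical but delicate, relies repeatedly on $\rho^2 = 2\rho + 1$ and on the recursive shape of $\pi^{(k)}$, and is where essentially all the effort lies. (Alternatively one can bypass the gluing narrative and verify the quadratic identity \eqref{eq:vanilla-func-quad-mat} directly as a matrix equation, but the recursive structure makes this hard to do cleanly without the induction.)
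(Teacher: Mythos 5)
The paper does not prove Proposition~\ref{prop:sssknown} at all: it is imported verbatim from \citep[Theorem 5.2]{rppa}, and the surrounding text only remarks that the certificate is built by recursive gluing. So there is no in-paper proof to compare against. Your reconstruction of the external argument is on the right track: the split into a linear part and the quadratic identity \eqref{eq:vanilla-func-quad-mat}, the bookkeeping $\sum \pi^{(k)} = \rho^k - 1$ giving $\lambda_{*,\bullet} = 2\rho^k - 1 = R_n$, and the induction that glues two copies of the level-$k$ certificate across the long middle step $\rho^{k-1}+1$, with the $\rho^2$ scaling on the second block coming from $\blambda^{(k+1),\rec}$ — all of this is consistent with how \citep{ap23a, ap23b, rppa} actually proceed.

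That said, as written this is a proof plan rather than a proof. The entire mathematical content of the inductive step — showing that expanding $\frac12\lVert x_0 - x_* - \sum_i \gamma^{(k)}_i g_i\rVert^2$ against the second block's $\frac12\lVert x_{n+1} - x_*\rVert^2$ produces cross terms that are \emph{exactly} absorbed by the specific entries $\rho\,\pi^{(k)}_{j-n}$ in rows $n$ and $2n{+}1$ and by the two sparse entries $\rho$ and $\rho^k$, and that the leftover reassembles into a single square with $\gamma^{(k+1)} = [\pi^{(k+1)}, \rho^{k+1}]$ and $R_{2n+1} = 2\rho^{k+1}-1$ — is precisely the part you defer (``mechanical but delicate \dots where essentially all the effort lies''). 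One concrete point your sketch glosses over: the second block's inductive identity is stated relative to its own starting point $x_{n+1}$ and its own multipliers against $x_*$ (the row $\ulambda^{(k)}$), so the gluing must also reconcile the two blocks' $*$-rows into the single row $\ulambda^{(k+1)} = [\pi^{(k+1)}, \rho^{k+1}]$; this is not automatic and is part of what the corrections are tuned for. If you intend this as a standalone proof you would need to carry out that coefficient matching explicitly (or verify \eqref{eq:vanilla-func-quad-mat} entrywise); as a description of the route taken in the cited source, it is accurate.
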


Based on this, we show that proximal GD with the silver stepsizes also has an $O(1/n^{\log_2 \rho})$ rate. 
This improves over the $O(1/n)$ classical rate of proximal GD \citep[Theorem 7]{tv23} and establishes that the phenomenon of stepsize-based acceleration extends to the composite setting.

\begin{theorem}[Convergence rate of proximal GD]\label{thm:sss}
    For $k \in \N$ and $n = 2^k-1$, proximal GD with the silver stepsizes satisfies
    \[F(x_n) - F(x_*) \leq \frac{\rho}{\sqrt{2}(4\rho^k-2)}\norm{x_0 - x_*}^2 \sim \frac{\rho}{4\sqrt{2}n^{\log_2 \rho}}\norm{x_0 - x_*}^2\,.
    \]
\end{theorem}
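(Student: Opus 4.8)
The plan is to apply Theorem~\ref{thm:main-func} to the composite extension of gradient descent with the silver stepsizes. Since GD has the diagonal stepsize matrix $H=\diag(\pi^{(k)})$, its composite extension (Definition~\ref{def:compositeextension}) is exactly proximal GD with the same stepsizes, so it suffices to obtain the guarantee of Theorem~\ref{thm:main-func} for this $H$. We feed in the unconstrained certificate of Proposition~\ref{prop:sssknown}---namely $\lambda=\lambda^{(k)}$, $\gamma=[\pi^{(k)},\rho^k]$, $R_n=2\rho^k-1$---and read off $\sigma$, $\mu$, and $S$ from Definition~\ref{def:coef-func}. By the theorem, all that remains is to verify (i) $\mu_{ij}\ge 0$ for all $i,j$ and (ii) $S\succeq 0$; once both hold we obtain $F_n-F_*\le\frac{1+\xi}{2R_n}\norm{x_0-x_*}^2$. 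Taking $\xi=1/\sqrt2$ makes $\frac{1+\xi}{2R_n}=\frac{\rho/\sqrt2}{2(2\rho^k-1)}=\frac{\rho}{\sqrt2(4\rho^k-2)}$, which is the stated bound, and the asymptotic equivalence follows from $2\rho^k-1\sim2\rho^k$ and $\rho^k=(2^k)^{\log_2\rho}\sim n^{\log_2\rho}$.

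For (i), I would compute $\mtilde^{(k)}$ from the identity $\mtilde=\widetilde H^{-1}(\lhat-\gtilde\stilde^T)+\ltilde$ of \eqref{eq:mutilde-func}. Because $H$ is diagonal, $\widetilde H=HU_n$ has the bidiagonal inverse $\widetilde H^{-1}=U_n^{-1}\diag(\pi^{(k)})^{-1}$, so each entry of $\mtilde^{(k)}$ is an explicit (if intricate) expression in the silver stepsizes $\pi^{(k)}_i$ and the entries of $\lambda^{(k)}$, together with $\sigma$ (itself explicit via \eqref{eq:sigma}). The natural strategy is induction on $k$: using $\pi^{(k+1)}=[\pi^{(k)},\rho^{k-1}+1,\pi^{(k)}]$ and the decomposition $\blambda^{(k+1)}=\blambda^{(k+1),\rec}+\blambda^{(k+1),\sparse}+\blambda^{(k+1),\lr}$ (plus the $\ulambda^{(k)}$ rows), one shows that $\mtilde^{(k)}$ inherits an analogous block-recursive structure, so nonnegativity of the newly created blocks reduces to finitely many scalar inequalities among powers of $\rho$, all valid since $\rho>1$. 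The sparse and low-rank corrections in $\lambda^{(k)}$ are what make this bookkeeping delicate, but they contribute only $O(1)$ terms per level.

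For (ii), with $\xi=1/\sqrt2$ fixed I would use the Schur-complement criterion: since $\xi>0$, $S\succeq0$ is equivalent to positive semidefiniteness of $L-\tfrac1\xi vv^T=L-\sqrt2\,vv^T$. The matrix $L$ is Laplacian (by Theorem~\ref{thm:main-func}), and $vv^T$ has vanishing row and column sums because $\sum v=0$, so $L-\sqrt2\,vv^T$ again has vanishing row and column sums; hence it suffices to check that its off-diagonal entries are nonpositive, i.e., $L_{ab}\le\sqrt2\,v_av_b$ for every $a\ne b$. From \eqref{eq:sigma} and \eqref{eq:slack-func} both $v$ (with entries $v_i=\sigma_i+\lambda_{*,i-1}-\mu_{*,i}$) and $L$ are explicit in $\lambda^{(k)}$, $\gamma$, and $\sigma$, and---as anticipated in the remark following Theorem~\ref{thm:main-func}---the recursive structure collapses this check to comparing only a couple of entries.

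The step I expect to be the main obstacle is the verification of (i): tracking how the fractal, recursively-glued structure of $\lambda^{(k)}$ and $\pi^{(k)}$ propagates through the linear map $\lambda\mapsto\mtilde$ and confirming that every entry of $\mtilde^{(k)}$ remains nonnegative at each level of the recursion. This is where the bespoke combinatorics of the silver stepsizes re-enters the analysis; by contrast, item (ii) and the closing arithmetic are comparatively routine once the closed-form expressions are in hand.
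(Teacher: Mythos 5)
Your overall route is exactly the paper's: reduce to items (i) and (ii) of Theorem~\ref{thm:main-func} with the certificate of Proposition~\ref{prop:sssknown}, take $\xi=1/\sqrt2$, and the closing arithmetic $\frac{1+\xi}{2R_n}=\frac{\rho}{\sqrt2(4\rho^k-2)}$ is right. The genuine gap is in item (i), which you correctly flag as the main obstacle but leave entirely unresolved: ``$\mtilde^{(k)}$ inherits an analogous block-recursive structure'' is not something that can be read off from the recursion for $\lambda^{(k)}$, because the map $\lambda\mapsto\mtilde$ involves $\widetilde H^{-1}=U_n^{-1}\diag(\pi^{(k)})^{-1}$, and so the entries of $\mtilde$ are \emph{differences of scaled partial column sums} of $\lambda$ (for diagonal $H$, $[-\widetilde H^{-1}A^T\widetilde H^T]_{i,j}=\alpha_j(\tfrac{\sum_{l\ge j}A_{l,i+1}}{\alpha_{i+1}}-\tfrac{\sum_{l\ge j}A_{l,i}}{\alpha_i})$), not recursive blocks of $\lambda$ itself. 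The missing idea that makes the verification go through is an \emph{entrywise} monotonicity across adjacent columns of $\lambda^{(k)}$ rescaled by the stepsizes, namely $\frac{\lambda_{i,j-1}}{\gamma_{j-1}}-\frac{\lambda_{i,j}}{\gamma_j}\ge 0$ for $i\le j-2$ and $\le 0$ for $i\ge j+1$ (proved by induction through the recursive gluing); summing these entrywise inequalities then controls the partial sums that appear in $\mtilde$. Moreover, the last row of $\mtilde$ does not follow from this comparison alone because of the correction term $\gtilde\stilde^T$: it requires a separate inductive inequality of the form $\pi^{(k)}_j\sum_{l=1}^{j-1}\lambda^{(k)}_{l,n}-\lambda^{(k)}_{j-1,n}-\lambda^{(k)}_{n,j-1}\ge 0$. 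Neither of these is present in your proposal, and without them the ``finitely many scalar inequalities among powers of $\rho$'' claim is unsubstantiated.

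A smaller but related gap is in item (ii): your reduction to checking $L_{ab}\le\sqrt2\,v_av_b$ off the diagonal is correct, but the claim that this ``collapses to comparing only a couple of entries'' depends on knowing that $v$ is sparse. That sparsity is itself an output of the item-(i) computation: one must show $[\mu_{*,1},\dots,\mu_{*,n}]=e_1+\gtilde+\stilde$ (using $\lambda_{1,0}=1$ and $\lambda_{i,0}=0$ for $i\ge2$), which gives $v=-e_1+e_{n+1}$ and reduces the Laplacian check to the single $(1,n+1)$ entry. As written, your argument for (ii) presupposes a computation you have not performed.
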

\begin{proof} 
	The result follows from an application of Theorem~\ref{thm:main-func} once we verify items (i) and (ii) there. 
	For item (i), from $H = \diag(\gtilde) = \diag(\gamma_0, \dots, \gamma_{n-1})$ we have
\[\mtilde = -\widetilde{H}^{-1}\ltilde^T\widetilde{H}^T - \begin{bmatrix}
    \bm{0}_{(n-1) \times n} \\
    (\gtilde + \stilde)^T
\end{bmatrix}\,.
\] 
The main technical portion is to calculate the first summand. In general, the individual entry of such matrix (in the form $-\widetilde{H}^{-1}A^T\widetilde{H}^T$) can be expressed as a scaled partial sum as follows.

\begin{lemma}\label{lem:partialsum}
    Let $H = \diag(\alpha_1, \dots, \alpha_n)$ where $\alpha_i \neq 0$ for all $1 \leq i \leq n$. Then
    \[[-\widetilde{H}^{-1}A^T\widetilde{H}^T]_{i, j} = \begin{cases}
        \alpha_j \left(\frac{\sum_{l \geq j}[A]_{l, i+1}}{\alpha_{i+1}} - \frac{\sum_{l \geq j}[A]_{l, i}}{\alpha_i}\right) & i \leq n-1 \\
        -\alpha_j\frac{\sum_{l \geq j}[A]_{l, n}}{\alpha_n} & i = n\,.
    \end{cases}
    \]
\end{lemma}
\begin{proof} From $-\widetilde{H}^{-1}A^T \widetilde{H}^T = -U_n^{-1}\diag(1/\alpha_1, \dots, 1/\alpha_n)A^T U_n^T \diag(\alpha_1, \dots, \alpha_n)$ and $[U_n^{-1}]_{i, j} = \begin{cases} 1 & i = j \\ -1 & j = i+1 \\ 0 & \text{else} \end{cases}$,
\[[-U_n^{-1}\diag(1/\alpha_1, \dots, 1/\alpha_n)A^T]_{i, j} = \begin{cases}
     \frac{[A]_{j, i+1}}{\alpha_{i+1}} - \frac{[A]_{j, i}}{\alpha_i} & i \leq n-1 \\
     -\frac{[A]_{j, n}}{\alpha_n} & i = n\,.
\end{cases}
\]
Thus (post)multiplying $U_n^T \diag(\alpha_1, \dots, \alpha_n)$ to this matrix yields the result.
\end{proof}

In this sense, to establish nonnegativity we need to compare partial sums in adjacent columns of $\ltilde$. The key lemma here is that, fortunately, we can compare the individual entries and sum the differences. Below, recall that $[\gamma_0, \dots, \gamma_{n-1}] = \pi^{(k)}$ from Proposition \ref{prop:sssknown}, and $\gamma_n = \rho^k$.

\begin{lemma}\label{lem:ssspartialsum}
    Let $\lambda = \lambda^{(k)}$. Then
    \[\frac{\lambda_{i, j-1}}{\gamma_{j-1}} - \frac{\lambda_{i, j}}{\gamma_j} \begin{cases}
        \geq 0 & 0 \leq i \leq j-2 \\
        \leq 0 & i \geq j+1
    \end{cases}
    \]
    for all $1 \leq j \leq n-1$. In particular, for $j = n$ the inequality is valid for $\gamma_n = 1$.\footnote{For $j = n$, this immediately implies a corresponding inequality for $\gamma_n = \rho^k$. This result is only needed for the induction argument.}
\end{lemma}
\begin{proof}
    We use induction on $k$. For $k = 1$, no such entry exists; for $k = 2$, most of the comparisons are trivial from either $\lambda_{i, j-1} = 0$ or $\lambda_{i, j} = 0$, and the nontrivial ones are: $\frac{\lambda_{3, 1}}{\gamma_1} - \frac{\lambda_{3, 2}}{\gamma_2} = \frac{\rho}{2} - \frac{\rho^2 + \rho + 1}{\sqrt{2}} \leq 0$, $\frac{\lambda_{1, 2}}{\gamma_2} - \frac{\lambda_{1, 3}}{\gamma_3} = \frac{\rho+1}{\sqrt{2}} - \rho = 0$ (letting $\gamma_3 = 1$).

    Now assume that the result holds for $k$, and consider $k+1$ with $\lambda = \lambda^{(k+1)}$ and corresponding $\gamma$. Then for $1 \leq j \leq n$, the inequalities hold from the recursive definition of $\lambda$ and induction hypothesis, with $\frac{\lambda_{2n+1, n-1}}{\gamma_{n-1}} - \frac{\lambda_{2n+1, n}}{\gamma_n} = 0 - \frac{\lambda^{(k+1), \sparse}_{2n+1, n}}{\gamma_n} \leq 0$. For $j = n+1$, note that $\lambda_{i, n} = 0$ for all $n+1 \leq i \leq 2n$ and $\lambda_{i, n+1} = 0$ for all $0 \leq i \leq n-1$; thus it only suffices to consider $i = 2n+1$, where $\frac{\lambda_{2n+1, n}}{\gamma_n} - \frac{\lambda_{2n+1, n+1}}{\gamma_{n+1}} = \frac{\rho^k}{\rho^{k-1} + 1} - \frac{\sqrt{2}\rho}{\sqrt{2}} \leq 0$. For $n+2 \leq j \leq 2n+1$ the inequality readily follows from the induction hypothesis, with $\frac{\lambda_{n, 2n}}{\gamma_{2n}} - \frac{\lambda_{n, 2n+1}}{\gamma_{2n+1}} = \frac{\rho^2}{\rho} - \rho = 0$ (letting $\gamma_{2n+1} = 1$).
\end{proof}

Establishing these, now we return to item (i), i.e., the nonnegativity of $\mu$. First we consider the first $n-1$ rows of $\mtilde$. For $2 \leq i \leq n-1$, we have 
\[\mu_{i, 1} = \gamma_0\left(-\frac{\lambda_{*, i}}{\gamma_{i}} + \frac{\lambda_{*, i-1}}{\gamma_{i-1}}\right) = 0\,,
\]
since $\lambda_{*, j} = \gamma_j$ for all $0 \leq j \leq n-1$. Thus for $1 \leq j < i \leq n-1$, by Lemma \ref{lem:ssspartialsum},
\[\mu_{i, j} = \gamma_{j-1}\left(-\frac{\sum_{l=1}^{j-1}\lambda_{l, i}}{\gamma_i} + \frac{\sum_{l=1}^{j-1} \lambda_{l, i-1}}{\gamma_{i-1}}\right) \geq 0\,,
\]
and similarly for $1 \leq i \leq n-1$ and $j > i$,
\[\mu_{i, j} = \gamma_{j-1}\left(\frac{\sum_{l = j}^n\lambda_{j, i}}{\gamma_i} - \frac{\sum_{l=j}^n \lambda_{j, i-1}}{\gamma_{i-1}} \right) \geq 0\,.
\]
For the $n$th row of $\mtilde$, for $1 \leq j \leq n-1$,
\begin{align*}
    \mu_{n, j} &= \frac{\gamma_{j-1}}{\gamma_{n-1}}(\lambda_{*, n-1} + \sum_{l=1}^{j-1}\lambda_{l, n-1}) - (\gamma_{j-1} + \sigma_j) \\
    &= \frac{\gamma_{j-1}}{\gamma_{n-1}}\sum_{l=1}^{j-1}\lambda_{l, n-1} - \frac{\lambda_{j-1, n} + \lambda_{n, j-1}}{\gamma_n} \\
    &\geq \frac{1}{\gamma_n}(\gamma_{j-1}\sum_{l=1}^{j-1} \lambda_{l, n} - \lambda_{j-1, n} - \lambda_{n, j-1}) \geq 0\,,
\end{align*}
where the second equality is from $\lambda_{*, n-1} = \gamma_{n-1}$ and the definition of $\sigma$, and the final inequality is from the following lemma.

\begin{lemma}
	For $k \geq 2$ and $n = 2^k-1$, $t_j^{(k)} := \pi_{j}^{(k)} \sum_{l=1}^{j-1}\lambda_{l, n}^{(k)} - \lambda_{j-1, n}^{(k)} - \lambda_{n, j-1}^{(k)} \geq 0$
	for all $1 \leq j \leq n-1$.
\end{lemma}
\begin{proof}
    For $k = 2$, $t_1^{(2)} = 0$ and $t_2^{(2)} = 2\lambda_{1, 3}^{(2)} - \lambda_{1, 3}^{(2)} - \lambda_{3, 1}^{(2)} = \rho - \rho = 0$.

    Assume that the result holds for $k$, and consider $2n+1 = 2^{k+1}-1$. Then clearly $t_j^{(k+1)} = 0$ for all $1 \leq j \leq n$, and $t_{n+1}^{(k+1)} = (1+\rho^{k-1})\rho - \rho - \rho^k = 0$. For $n+2 \leq j \leq 2n$,
    \begin{align*}
        t_j^{(k+1)} &=  \pi_j^{(k+1)}\sum_{l=1}^{j-1}\lambda_{l, 2n+1}^{(k+1)} - \lambda_{j-1, 2n+1}^{(k+1)} - \lambda_{2n+1, j-1}^{(k+1)} \\
        &= \pi_{j-n-1}^{(k)}\left(\rho + \sum_{l=1}^{(j-n-1)-1}\rho^2 \lambda_{l, n}^{(k)}\right) - \rho^2\lambda_{(j-n-1)-1, 2n+1}^{(k)} - (\rho^2\lambda_{n, (j-n-1)-1}^{(k)} + \rho\pi_{j-n-1}^{(k)}) \\
        &= \rho^2t^{(k)}_{j-n-1} \geq 0\,.
    \end{align*}
\end{proof}

Furthermore, from $\lambda_{1, 0} = 1$ and $\lambda_{2, 0} = \dots = \lambda_{n, 0} = 0$, $\begin{bmatrix}
        \mu_{*, 1} & \dots & \mu_{*, n}
    \end{bmatrix} = -\bm{1}_n^T \mtilde 
    = e_1 + \gtilde + \stilde$ which proves item (i) with $v_i = \begin{cases}
    -1 & i = 1 \\
    1 & i = * \\
    0 & \text{else}
\end{cases}$.

For item (ii), it suffices to choose $\xi > 0$ such that $L - \frac{1}{\xi}vv^T$ is Laplacian. Since $v = -e_1 + e_{n+1}$, we only need to check that the $(1, n+1)$ entry of this matrix $-\sqrt{2} -\sigma_1\sigma_* + \frac{1}{\xi}$ is nonpositive. For this we can take $\xi = \frac{1}{\sqrt{2}}$, (note that for $k \geq 2$, $\sigma_1 = 0$ and thus the entry becomes 0), proving item (ii).

\end{proof}

\subsection{Proximal OGM}

Optimized gradient method (OGM) \citep{ogm} is a ($n$-step) first-order method with stepsize matrix $H$, whose entry is recursively defined as
\begin{equation}\label{eq:ogmstep}
\alpha_{i+1, j} = \begin{cases}
    \frac{\theta_i-1}{\theta_{i+1}}\alpha_{i, j} & 0 \leq j \leq i-2 \\
    \frac{\theta_i-1}{\theta_{i+1}}(\alpha_{i, i-1}-1) & j = i-1 \\
    1 + \frac{2\theta_i-1}{\theta_{i+1}} & j = i\,.
\end{cases}
\end{equation}
Here, for each $n\in \N$ the sequence $\{\theta_i: 0 \leq i \leq n\}$ is defined as
\begin{equation}\label{eq:theta}
\begin{split}
    \theta_i &= \begin{cases}
        1 & i = 0 \\
    \frac{1+\sqrt{1+4\theta_{i-1}^2}}{2} &  1 \leq i \leq n-1 \\
    \frac{1+\sqrt{1+8\theta_{i-1}^2}}{2} & i = n\,.
    \end{cases}
\end{split}
\end{equation}
It is straightforward to check that $\theta_{i+1}^2 - \theta_{i+1} -\theta_i^2 =0$ for $ 0 \leq i \leq n-2$, $\theta_n^2 - \theta_n - 2\theta_{n-1}^2 = 0$ and $\theta_n^2 \sim n^2/2$ (see e.g., \citep{factorsqrt2}). The certificate for the convergence rate of OGM is rather simple, as presented in the following proposition. 

\begin{prop}[Multipliers for OGM; {\citep[Theorem 2]{ogm}}]
    OGM satisfies \eqref{eq:vanilla-func} with
    \begin{align*}
        \lambda_{i, j} &= \begin{cases}
            2\theta_i^2 & j = i+1, 0 \leq i \leq n-1 \\
            2\theta_j & i = *, 0 \leq j \leq n-1 \\
            \theta_j & i = *, j = n \\
            0 & \text{else}\,,
        \end{cases}\\
        \gamma &= [2\theta_{0}, \dots, 2\theta_{n-1}, \theta_{n}]\,, \\
        R_n &= \theta_n^2 \,.
    \end{align*}
\end{prop}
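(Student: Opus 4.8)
The plan is to verify the stated identity \eqref{eq:vanilla-func} directly by matching coefficients; its content is essentially that of \citep[Theorem 2]{ogm}, recast in the present notation. First I would substitute $g_*=\nabla f(x_*)=0$ (valid since $x_*$ is optimal for $f$) and the reduced iterate form $x_i = x_0 - \sum_{k<i}\walpha_{i,k}g_k$, so that $x_i - x_* = (x_0-x_*) - \sum_{k<i}\walpha_{i,k}g_k$ while any difference $x_i-x_j$ of two iterates lies in $\spann\{g_0,\dots,g_{n-1}\}$. Since the prescribed $\lambda$ is supported only on the consecutive co-coercivities $Q_{i,i+1}$ (weight $2\theta_i^2$) and the ones anchored at the optimum $Q_{*,j}$ (weight $2\theta_j$ for $j\le n-1$, weight $\theta_n$ for $j=n$), both sides of \eqref{eq:vanilla-func} collapse to short sums, and I would compare coefficients in three groups: linear in $\{f_i\}\cup\{f_*\}$; bilinear in $\angs{x_0-x_*}{g_j}$; and purely quadratic in $\{g_i\}$ (the $\tfrac12\norm{x_0-x_*}^2$ terms match on sight).

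For the linear part, the coefficient of $f_i$ on the left is $2\theta_i^2-2\theta_{i-1}^2-2\theta_i$ for $1\le i\le n-1$ (with $\theta_{-1}:=0$), which vanishes by $\theta_i^2-\theta_i-\theta_{i-1}^2=0$; the coefficient of $f_0$ is $2\theta_0^2-2\theta_0=0$; the coefficient of $f_n$ is $-2\theta_{n-1}^2-\theta_n=-\theta_n^2=-R_n$ by $\theta_n^2-\theta_n-2\theta_{n-1}^2=0$; and the coefficient of $f_*$ is $2\sum_{j=0}^{n-1}\theta_j+\theta_n$, which equals $\theta_n^2=R_n$ once one uses $\sum_{j=0}^{n-1}\theta_j=\theta_{n-1}^2$ (obtained by telescoping $\theta_j^2-\theta_j=\theta_{j-1}^2$). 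For the bilinear part, $\angs{x_0-x_*}{g_j}$ appears on the left only through $-\lambda_{*,j}\angs{g_j}{x_*-x_j}$, with coefficient $\lambda_{*,j}=\gamma_j$, which is exactly cancelled by the cross term $-\angs{x_0-x_*}{\sum_i\gamma_i g_i}$ produced by expanding the square; the right side has no such term, so this group matches.

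The remaining and only nontrivial part is the purely-quadratic-in-gradients identity: after the cancellations above, what must be checked is that
\begin{equation*}
-\sum_{i=0}^{n-1}2\theta_i^2\,\angs{g_{i+1}}{x_i-x_{i+1}}\;-\;\sum_{j=0}^{n}\lambda_{*,j}\Bigl(\tfrac12\norm{g_j}^2+\sum_{k<j}\walpha_{j,k}\angs{g_j}{g_k}\Bigr)\;-\;\sum_{i=0}^{n-1}\theta_i^2\norm{g_i-g_{i+1}}^2\;+\;\tfrac12\norm{\sum_{i=0}^n\gamma_i g_i}^2\;=\;0
\end{equation*}
holds identically in $g_0,\dots,g_n$, where $x_i-x_{i+1}=\sum_{k\le i}\alpha_{i+1,k}g_k$. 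The natural route is to realize this as a telescoping sum of $n$ per-step identities: for $0\le i\le n-2$ the $i$-th identity involves only the quantities at iterations $i$ and $i+1$ and closes using $\theta_{i+1}^2-\theta_{i+1}=\theta_i^2$ together with the OGM recursion \eqref{eq:ogmstep}, while the final ($i=n-1$) identity additionally invokes $\theta_n^2-\theta_n=2\theta_{n-1}^2$; summing over $i$ and dropping boundary terms gives the display. Equivalently, one can exhibit a potential $\phi_i$ that is nonincreasing along OGM (a nonnegative combination of co-coercivities per step) and identify $\phi_0$ and $\phi_n$ with the two sides of \eqref{eq:vanilla-func}. Either way, the main obstacle is precisely this per-step verification: once \eqref{eq:ogmstep} and the $\theta$-relations are substituted, one must check that the coefficient of every inner product $\angs{g_k}{g_\ell}$ cancels---a finite but delicate bookkeeping exercise in how the accumulated stepsizes $\walpha_{i,k}$, the per-step stepsizes $\alpha_{i+1,k}$, and the weights $2\theta_i^2$, $\lambda_{*,j}$, $\gamma_i$ interact. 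This is the kind of algebra that motivated the computer-assisted design of OGM in the first place, but here it reduces to a verification rather than a search.
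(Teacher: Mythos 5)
You should first note that the paper does not prove this proposition at all: it is imported verbatim from \citep[Theorem 2]{ogm} (the dual PEP certificate found there), so there is no in-paper argument to compare against. Judged on its own terms, your coefficient-matching setup is the right one, and the parts you actually carry out are correct: the coefficients of $f_0,\dots,f_{n-1}$ vanish by $\theta_i^2-\theta_i-\theta_{i-1}^2=0$, the coefficient of $f_n$ equals $-\theta_n^2=-R_n$ by $\theta_n^2-\theta_n-2\theta_{n-1}^2=0$, the coefficient of $f_*$ equals $\theta_n^2$ via the telescoped identity $\sum_{j=0}^{n-1}\theta_j=\theta_{n-1}^2$, and the $\angs{x_0-x_*}{g_j}$ terms cancel precisely because $\lambda_{*,j}=\gamma_j$ (this is exactly \eqref{eq:vanilla-func-quad-star}).

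The gap is that the entire mathematical content of the certificate sits in the purely-quadratic-in-gradients identity you display, and you do not verify it. Moreover, the telescoping you propose---``the $i$-th identity involves only the quantities at iterations $i$ and $i+1$''---cannot work as literally stated: a cross term $\angs{g_k}{g_\ell}$ with $\ell-k\geq 2$ receives contributions from $\lambda_{*,\ell}\walpha_{\ell,k}$, from $\gamma_k\gamma_\ell$ in the expanded square, and from the full-history steps $x_i-x_{i+1}=\sum_{k\leq i}\alpha_{i+1,k}g_k$, so no decomposition into blocks indexed by consecutive iterations can cancel these blockwise. The known per-step proofs of OGM's rate handle this with a potential containing $\theta_i^2(f_i-f_*)$ plus $\tfrac12\norm{z_i-x_*}^2$ for the auxiliary sequence $z_i=x_0-2\sum_{k<i}\theta_k g_k$, which aggregates all past gradients and absorbs exactly those long-range cross terms (see \citep{factorsqrt2}); in the matrix language of this paper, the verification amounts to checking \eqref{eq:vanilla-func-quad-mat}--\eqref{eq:vanilla-func-quad-scalar} for this $\lambda,\gamma,H$, which in turn needs the identity $\sum_{j=k+1}^{i}\alpha_{j,k}+\theta_{i+1}\alpha_{i+1,k}=2\theta_k$ of \citep[Lemma 4]{ogm} (equivalently, the factorization of $H$ in Lemma \ref{lem:ogmHmatrix}). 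Until that computation is carried out, your proposal establishes only the easy coefficients and defers the heart of the theorem.
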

We establish a comparable rate for its composite extension as follows. Notably, this convergence rate is faster than that of FISTA and only differs by a small constant factor from the exactly optimal rate.

\begin{theorem}[Convergence rate of proximal OGM]\label{thm:ogm}
    For $n \geq 2$, the composite extension of OGM satisfies\footnote{For $n = 1$, $F(x_n) - F(x_*) \leq \frac{2}{3\theta_n^2}\norm{x_0 - x_*}^2 = \frac{1}{6}\norm{x_0 - x_*}^2$ holds, which is tight. For $n \geq 3$, the constant factor in the convergence rate in Theorem~\ref{thm:ogm} can be slightly improved; see the proof and Footnote \ref{footnote:tightconst-func} for details. }
    \[F(x_n) - F(x_*) \leq \frac{3+\sqrt{5}}{8\theta_n^2}\norm{x_0 - x_*}^2 \sim \frac{3+\sqrt{5}}{4n^2}\norm{x_0 - x_*}^2 \,.
    \]
\end{theorem}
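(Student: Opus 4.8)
The plan is to invoke Theorem~\ref{thm:main-func} with the OGM certificate $(\lambda,\gamma,R_n)$ from the preceding proposition, and to verify its two hypotheses --- (i) $\mu_{ij}\ge 0$ and (ii) $S\succeq 0$ --- using the closed-form expressions of Definition~\ref{def:coef-func}. The case $n=1$ is treated separately (as noted in the footnote to the statement), so throughout I assume $n\ge 2$.

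First I would compute the auxiliary quantities. Because the OGM multipliers $\lambda_{ij}$ are supported only on the superdiagonal, $\lambda_{i,i+1}=2\theta_i^2$, together with the row $i=*$, the vector $\sigma$ collapses almost completely: $\sigma_i=0$ for $1\le i\le n-1$, and
\[\sigma_n=\frac{\lambda_{n-1,n}}{\gamma_n}=\frac{2\theta_{n-1}^2}{\theta_n}=\theta_n-1,\qquad \sigma_*=\frac{\lambda_{*,n}}{\gamma_n}=1,\]
where I used the relation $\theta_n^2-\theta_n=2\theta_{n-1}^2$. For the same reason $\ltilde$ is upper bidiagonal (with vanishing last row and first column) and $\lhat$ is tridiagonal. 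This sparsity is what keeps the matrix products defining $\mtilde$, $v$, and $L$ manageable.

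The heart of the argument is computing $\mu$ and checking (i). I would write $\widetilde H=HU_n$ explicitly --- OGM's cumulative stepsizes $\walpha_{i,j}$ have closed forms in terms of the $\theta_i$ --- invert it, and expand
\[\mtilde=-\widetilde H^{-1}\bigl((\widetilde H\ltilde)^{T}+\gtilde(\gtilde+\stilde)^{T}\bigr),\]
simplifying throughout with the recursions $\theta_{i+1}^2-\theta_{i+1}=\theta_i^2$ and $\theta_n^2-\theta_n=2\theta_{n-1}^2$. I expect $\mu$ to come out sparse (concentrated in the last row and column and in the row $i=*$) and manifestly nonnegative, just as in the analogous step of the proof of Theorem~\ref{thm:sss}; in particular, evaluating the row vector $[\mu_{*,1}\ \cdots\ \mu_{*,n}]=-\bm{1}_n^{T}\mtilde$ simultaneously verifies the remaining part of (i) and pins down $v$ via $v_i=\sigma_i+\lambda_{*,i-1}-\mu_{*,i}$.

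For (ii), I use that $L=-\left[\begin{smallmatrix}\lhat & \gtilde\\ \gtilde^{T} & -\lambda_{*,\bullet}\end{smallmatrix}\right]-\sigma\sigma^{T}$ is already Laplacian and that $v$ has only a couple of nonzero entries. Rather than taking the tight value $\xi=v^{T}L^{\dagger}v$, I would choose the clean constant $\xi=\tfrac{1}{1+\sqrt5}=\tfrac{\sqrt5-1}{4}$ (admissible uniformly in $n$), so that $1+\xi=\tfrac{3+\sqrt5}{4}$ and $\tfrac1\xi=1+\sqrt5$; it then suffices to show that the Schur complement $L-(1+\sqrt5)\,vv^{T}$ is Laplacian, and since $\sum v=0$ already forces its row and column sums to vanish, this reduces to checking that its few off-diagonal entries remain nonpositive --- a comparison of one or two entries, exactly as in the proof of Theorem~\ref{thm:sss}. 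With (i) and (ii) established, Theorem~\ref{thm:main-func} yields
\[F(x_n)-F(x_*)\le\frac{1+\xi}{2R_n}\norm{x_0-x_*}^2=\frac{3+\sqrt5}{8\theta_n^2}\norm{x_0-x_*}^2,\]
and the asymptotic form follows from $\theta_n^2\sim n^2/2$; a marginally sharper constant for $n\ge 3$ comes from a slightly smaller admissible $\xi$. The main obstacle is the middle step --- inverting $\widetilde H$ for OGM and massaging the resulting formulas for $\mu$ (and for $L$, $v$) into a form where nonnegativity and the Laplacian property are transparent; the rest is bookkeeping with the $\theta$-recursions.
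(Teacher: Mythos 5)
Your skeleton is exactly the paper's: invoke Theorem~\ref{thm:main-func} with the OGM certificate, verify (i) and (ii) from the closed forms in Definition~\ref{def:coef-func}, take $\xi=\tfrac{\sqrt5-1}{4}$ so that $\tfrac{1+\xi}{2R_n}=\tfrac{3+\sqrt5}{8\theta_n^2}$, and conclude via $\theta_n^2\sim n^2/2$. Your computation $\sigma=[\bm{0}_{n-1},\,\theta_n-1,\,1]$ is correct. However, the two structural predictions on which your verification plan rests are both false, and they are precisely where the work lies. First, $\mu$ does \emph{not} come out sparse or concentrated in the last row/column: for each column $j$, the entries $\mu_{1,j},\dots,\mu_{j-1,j}$ and $\mu_{j+1,j}$ are all strictly positive (only $\mu_{i,j}$ for $i\ge j+2$ vanish), and nothing about their signs is manifest from the formula for $\mtilde$. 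The paper gets the signs by factoring $H=\diag(2\theta_0,\dots,2\theta_{n-1})\,U(\varphi_1,\dots,\varphi_n)\,U(\theta_1,\dots,\theta_n)^{-1}$ and then solving, column by column, a bidiagonal system $U(\varphi_1,\dots,\varphi_n)x=\cdots$ whose solution has one negative entry $x_{j+1}$ followed by positive entries $x_j,\dots,x_1$ (Lemma~\ref{lem:ogmls}); this sign analysis is the bulk of item (i) and cannot be waved through as bookkeeping.

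Second, $v$ does \emph{not} have only a couple of nonzero entries: one finds $v_j=(\varphi_1-\theta_1)\,x_1(j)<0$ for every $1\le j\le n$ (contrast with Theorem~\ref{thm:sss}, where $v=-e_1+e_{n+1}$). Consequently the perturbation $-\tfrac1\xi vv^T$ degrades \emph{all} $n$ off-diagonal entries in the last column of $L$, and one needs quantitative bounds on every $v_i$: the paper shows $-\tfrac12<v_i\le 0$ for $i\ge 2$ and, crucially, evaluates $v_1=-2(\sqrt5-2)\bigl(1+\tfrac{\sqrt5-1}{4\varphi_2}\bigr)\in\bigl[-\tfrac{\sqrt5-1}{2},-\tfrac12\bigr)$ explicitly. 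The admissibility of $\xi=\tfrac{\sqrt5-1}{4}$ is therefore not a ``clean choice'' you get for free --- it is forced by $v_1$, since the $(1,n+1)$ entry $-2\theta_0-v_1/\xi\le -2+\tfrac{(\sqrt5-1)/2}{(\sqrt5-1)/4}=0$ holds with equality in the worst case; this computation is where the constant $\tfrac{3+\sqrt5}{8}$ in the theorem actually comes from. So while the route is right, the two items you defer constitute the entire proof, and the shortcuts you propose for them would not survive the computation.
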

\begin{proof}
    As before, we show items (i) and (ii) of Theorem \ref{thm:main-func}. For item (i), first we use the following formulation of the stepsize matrix for OGM.
    \begin{lemma}[Factorization of $H$ for OGM]\label{lem:ogmHmatrix}
        Let $H$ be the $n$-stepsize matrix for OGM. Then 
        \[H = \diag(2\theta_0, \dots, 2\theta_{n-1}) U(\varphi_1, \dots, \varphi_n)  U(\theta_1, \dots, \theta_n)^{-1}\,,
        \]
        where $\varphi_i := \begin{cases}
        1 + \frac{\theta_{i-1}}{2\theta_i} & i < n\ \\
        1 + \frac{\theta_{n-1}}{\theta_n} & i = n\,.
    \end{cases}$
    \end{lemma}
    \begin{proof}
        Consider an equivalent equality (note that each matrix is upper triangular)
        \begin{equation}\label{eq:ogmHmatrix}
            H U(\theta_1, \dots, \theta_n) = \diag(2\theta_0, \dots, 2\theta_{n-1})U(\varphi_1, \dots, \varphi_n)\,.
        \end{equation}
        From \citep[Lemma 4]{ogm}, we have $\sum_{j=k+1}^i \alpha_{j, k} + \theta_{i+1} \alpha_{i+1, k} = 2\theta_k$ for all $0 \leq k < i \leq n-1$, which is precisely the equality corresponding to the $(k+1, i+1)$ entry of \eqref{eq:ogmHmatrix}. Similarly, from the same lemma we have $\theta_{i+1}\alpha_{i+1, i} = \theta_{i+1} + 2\theta_i-1$ for all $0 \leq i \leq n-1$, which is precisely the equality corresponding to the $(i+1, i+1)$ entry of \eqref{eq:ogmHmatrix}, from $2\theta_i \varphi_{i+1} = \theta_{i+1}+2\theta_i-1$ which follows from the recursive definition of $\{\theta_i\}$.
    \end{proof}

    Now we calculate the term $(\widetilde{H}\ltilde)^T$ in \eqref{eq:mutilde-func}. From $$[\ltilde]_{i, j} = \begin{cases}
        -2\theta_i^2 & j = i+1 \\
        2\theta_i^2 & j = i+2 \\
        0 & \text{else}\,,
    \end{cases} \qquad \text{ thus } \qquad [U_n \ltilde]_{i, j} = \begin{cases}
        -2\theta_i^2 & j = i+1 \\
        -2\theta_{j-1} & j \geq i + 2 \\
        0 & \text{else}\,,
    \end{cases}$$ it can be observed that
    \[U_n \ltilde = U(\theta_1, \dots, \theta_n)\begin{bmatrix}
        \bm{0}_{n-1} & \diag(-2\theta_1, \dots, -2\theta_{n-1}) \\
        0 & \bm{0}_{n-1}^T
    \end{bmatrix}\,,
    \]
    which implies
    \begin{align*}
        \widetilde{H}\ltilde &= \diag(2\theta_0, \dots, 2\theta_{n-1})U(\varphi_1, \dots, \varphi_n)\begin{bmatrix}
        \bm{0}_{n-1} & \diag(-2\theta_1, \dots, -2\theta_{n-1}) \\
        0 & \bm{0}_{n-1}^T
    \end{bmatrix} \\
    &= \diag(2\theta_0, \dots, 2\theta_{n-1})\left(\begin{bmatrix}
        \bm{0}_{n-1} & \diag(-\theta_0, \dots, -\theta_{n-2}) \\
        0 & \bm{0}_{n-1}^T
    \end{bmatrix} + \begin{bmatrix}
        & -2\theta_1 & -2\theta_2 & \dots & -2\theta_{n-1} \\
        & & -2\theta_2 & \dots & -2\theta_{n-1} \\
        \bm{0}_{n-1} & & & \ddots & \vdots \\
        & & & & -2\theta_{n-1} \\
        0 & & \bm{0}_{n-1}^T & &
    \end{bmatrix}\right)\,.
    \end{align*}
Also, from \eqref{eq:sigma} we obtain $\sigma = [\bm{0}_{n-1}, \theta_{n}-1, 1]$, which implies that
\[[(\widetilde{H}\ltilde)^T + \gtilde(\gtilde + \stilde)^T]_{i, j} = \begin{cases}
    4\theta_{i-1}\theta_{j-1} & i \leq j \leq n-1 \\
    -2\theta_{j-1}^2 & i = j+1, j \leq n-1 \\
    2(2\theta_{n-1} + \theta_n - 1)\theta_{i-1} & j = n \\
    0 & \text{else} \,.
\end{cases}
\]
Thus 
\begin{align*}
    \mtilde &= -\widetilde{H}^{-1}((\widetilde{H}\ltilde)^T + \gtilde(\gtilde + \stilde)^T) \\
    &= -U_n^{-1}U(\theta_1, \dots, \theta_n)U(\varphi_1, \dots, \varphi_n)^{-1}\diag(2\theta_0, \dots, 2\theta_{n-1})^{-1}((\widetilde{H}\ltilde)^T + \gtilde(\gtilde + \stilde)^T)\,,
\end{align*}
where 
\[[\diag(2\theta_0, \dots, 2\theta_{n-1})^{-1}((\widetilde{H}\ltilde)^T + \gtilde(\gtilde + \stilde)^T)]_{i, j} = \begin{cases}
    2\theta_{j-1} & i \leq j \leq n-1 \\
    -(\theta_j - 1) & i = j+1, j \leq n-1 \\
    2\theta_{n-1} + \theta_n - 1 & j = n \\
    0 & \text{else}\,.
\end{cases}
\]

Now we consider each column of $\mtilde$, which boils down to solving the following equation. The proof is straightforward by solving the equation in the order of $x_n, x_{n-1}, \dots, x_1$.
\begin{lemma}[Solving linear system for $j$th column]\label{lem:ogmls}
    For $1 \leq j \leq n$, define $x(j)$ to be the unique solution $x = [x_1, \dots, x_n]$ of
    \[U(\varphi_1, \dots, \varphi_n)x = 2\theta_{j-1}\begin{bmatrix}
        \bm{1}_{j} \\
        \bm{0}_{n-j}
    \end{bmatrix} -(\theta_{j}-1)e_{j+1}\,,
    \]
    where for here, $e_{n+1} := \bm{0}_n$. Then
    \begin{align*}
        x_n &= \dots = x_{j+2} = 0, \\
        x_{j+1} &= -\frac{\theta_j-1}{\varphi_{j+1}} < 0 , \\
        x_j &= \frac{1}{\varphi_j}(2\theta_{j-1}-x_{j+1}) > 0, \\
        x_i &= \frac{\varphi_{i+1}-1}{\varphi_i}x_{i+1} = \frac{\theta_{i+1}-1}{\theta_{i-1} + 2\theta_i}x_{i+1} > 0, 1 \leq i \leq j-1.
    \end{align*}
\end{lemma}

Now fix $1 \leq j \leq n-1$. Then for $x = x(j)$, by Lemma \ref{lem:ogmls} we have $x_{j+1} < 0$ and $x_i > 0$ for all $1 \leq i \leq j$. Thus with $[U_n^{-1}U(\theta_1, \dots, \theta_n)]_{i, j} = \begin{cases}
    \theta_i & i = j \\
    1 - \theta_j & j = i + 1 \\
    0 & \text{else}\,,
\end{cases}$ we have
\[\begin{bmatrix}
\mu_{1, j} \\
\vdots \\
\mu_{j-1, j} \\
-\mu_{\bullet, j} \\
\mu_{j+1,j }\\
\vdots \\
\mu_{n, j}
\end{bmatrix} = -\begin{bmatrix}
    \theta_1 & 1 - \theta_2 & & &   \\
    & \theta_2 & 1-\theta_3 & & \\
    & & \ddots & \ddots  & \\
    & & & \theta_{n-1} & 1 - \theta_n \\
    & & & & \theta_n
\end{bmatrix}\begin{bmatrix}
    x_1 \\
    \vdots \\
    x_n
\end{bmatrix}\,,
\]
or equivalently,
\begin{align*}
    \mu_{n, j} &= \dots = \mu_{j+2, j} = 0, \\
    \mu_{j+1, j} &= -\theta_{j+1}x_{j+1} > 0, \\
    \mu_{i, j} &= -(\theta_i x_i + (1-\theta_{i+1})x_{i+1}) = -\theta_ix_i + (\theta_{i-1} + 2\theta_i)x_i = (\theta_{i-1} + \theta_i)x_i > 0,  1 \leq i \leq j-1,
\end{align*}
and
\[\mu_{*, j} = \mu_{\bullet, j} - \sum_{i \notin \{j, *\}}\mu_{i, j} = \theta_1x_1 + x_2 + \dots + x_n = (\theta_1 - \varphi_1)x_1 + 2\theta_{j-1} = (\theta_1 - \varphi_1)x_1 + \sigma_j + \lambda_{*, j-1}\,,
\]
which proves that $\mu_{*, j} > 0$ and $v_j < 0$. It can be analogously shown that $\mu_{1, n}, \dots, \mu_{n-1, n} > 0$ and $v_n = \sigma_n + \lambda_{*, n-1} - \mu_{*, n} < 0$, where the only difference is that the factor $2\theta_{j-1}$ in the linear system in Lemma \ref{lem:ogmls} is replaced by $2\theta_{n-1} + \theta_n - 1$, proving item (i).

For item (ii), first note that $L$ is a symmetric matrix such that
\[[L]_{i, j} = \begin{cases}
    4\theta_{i-1}^2 & i = j \leq n-1 \\
    \theta_n^2 - 1 & i = j \in \{n, n+1\} \\
    -2\theta_{i-1}^2 & j = i+1 \leq n \\
    -2\theta_{n-1} - \theta_n + 1 & j=i+1=n+1 \\
    -2\theta_{i-1} & i \leq n-1, j = n+1 \\
    0 & \text{else}\,.
\end{cases}
\]
Also, for all $n \geq 2$, from
\begin{equation}\label{eq:veeone}
\begin{split}
    v_1 &= -2\theta_0(\theta_1 - \varphi_1)\frac{1}{\varphi_1}\left(1+\frac{\theta_1 - 1}{2\theta_0 \varphi_2}\right) \\
    &= -2\left(\frac{1+\sqrt{5}}{2}-\frac{3+\sqrt{5}}{4}\right)(3-\sqrt{5})\left(1+\frac{\sqrt{5}-1}{4\varphi_2}\right) \\
    &= -2(\sqrt{5}-2)\left(1+\frac{\sqrt{5}-1}{4\varphi_2}\right)
\end{split}
\end{equation}
and $1 \leq \varphi_2 < 2$ we have $-\frac{\sqrt{5}-1}{2} \leq v_1 < -\frac{1}{2}$. Thus choosing $\xi = \frac{\sqrt{5}-1}{4}$, $L - \frac{1}{\xi}vv^T$ is Laplacian: it only suffices to check whether the nondiagonal entries on the $(n+1)$th column are still nonpositive, which is true because the $(1, n+1)$ entry is $-2\theta_0 - \frac{v_1}{\xi} \leq -2 + \frac{\sqrt{5}-1}{2\xi} = 0$ and the $(i, n+1)$ entry for $2 \leq i \leq n$ is $-2\theta_{i-1} - \frac{v_i}{\xi} \leq -2 + \frac{1}{2\xi} < 0$ from $-\frac{1}{2} < v_i \leq 0$. Thus $S$ is positive semidefinite, proving item (ii).\footnote{For $n = 1$, from $L = \begin{bmatrix} 3 & -3 \\ -3 & 3
\end{bmatrix}$ and $v_1 = -1$ we can choose $\xi = \frac{1}{3}$. For $n \geq 3$, the smallest value of $\xi$ that our approach can take is $\xi = -\frac{v_1}{2} = \frac{15\sqrt{5}-17 - \sqrt{1942 - 862\sqrt{5}}}{44} \approx 0.2894$ from \eqref{eq:veeone}, slightly smaller than $\frac{\sqrt{5}-1}{4} \approx 0.3090$.\label{footnote:tightconst-func}}
\end{proof}

As noted in Section \ref{subsec:priorworks}, the composite extension of OGM is equal to the proximal OGM (POGM) introduced in \citep{taylorcomposite}, by the following proposition. POGM updates as\footnote{In \citep{taylorcomposite}, an additional notation of $\gamma_k$ is introduced there instead of $\alpha_{k+1, k}$; by \eqref{eq:ogmstep}, they are identical.} (with $y_0 = z_0 = x_0$)
\begin{align*}
    y_{k+1} &= x_{k} - \nabla f(x_k), \\
    z_{k+1} &= y_{k+1} + \frac{\theta_k-1}{\theta_{k+1}}\left(y_{k+1}-y_k + \frac{z_k-x_k}{\alpha_{k, k-1}}\right) + \frac{\theta_{k}}{\theta_{k+1}}(y_{k+1}-x_k), \\
    x_{k+1} &= \prox_{\alpha_{k+1, k}h}(z_{k+1}),
\end{align*}
for $0 \leq k \leq n-1$ (recall that $\alpha_{k, k-1} = 1 + \frac{2\theta_{k-1}-1}{\theta_k}$; for $k = 0$, $\alpha_{k, k-1}$ is not used as $z_k-x_k = 0$), and outputs the final iterate $x_n$.

We note that the equivalence between POGM and the composite extension of OGM is natural, as they share common principles. As explained in \citep[Section 4.3]{taylorcomposite}, POGM is constructed based on two principles: the algorithm being equivalent to OGM when $h \equiv 0$, and the algorithm staying at the optimum point (i.e., $x_{k-1} = x_k = x_*$ implies $x_{k+1} = x_*$). One can deduce that both of these are satisfied by composite extension.

\begin{prop}[Efficient form as POGM]\label{prop:ogmeff} Fix $n \in \N$, and let $\{(\widetilde{x}_k, \widetilde{y}_k, \widetilde{z}_k): 0 \leq k \leq n\}$ be the iterates of POGM and $\{x_k: 0 \leq k \leq n\}$ be the iterates of the composite extension of OGM, with $\widetilde{x}_0 = x_0$. Then $\widetilde{x}_k = x_k$ for all $1 \leq k \leq n$. 
\end{prop}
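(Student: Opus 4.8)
The plan is a strong induction on $k$ with a strengthened hypothesis that also tracks the proximal inputs. Using the implementable form of the composite extension (the Remark after Definition~\ref{def:compositeextension}), abbreviate $w_j := g_j + s_{j+1}$ for $0 \le j \le n-1$ and let $\zeta_k := x_{k-1} - \sum_{j=0}^{k-2}\alpha_{k,j}w_j - \alpha_{k,k-1}g_{k-1}$ denote the argument of the $k$-th $\prox$, so that $x_k = \prox_{\alpha_{k,k-1}h}(\zeta_k)$; a one-line rearrangement of the formula for $s_k$ in that Remark gives the identity $\zeta_k - x_k = \alpha_{k,k-1}s_k$. I would prove $\widetilde{x}_k = x_k$ for all $k \ge 0$ and $\widetilde{z}_k = \zeta_k$ for all $k \ge 1$. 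Since the two algorithms use the same proximal parameter $\alpha_{k+1,k}$ at step $k+1$, once $\widetilde{z}_{k+1} = \zeta_{k+1}$ is established it follows immediately that $\widetilde{x}_{k+1} = \prox_{\alpha_{k+1,k}h}(\widetilde{z}_{k+1}) = x_{k+1}$; hence the crux is to propagate the identity $\widetilde{z}_k = \zeta_k$ by one step.

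\textbf{Inductive step.} Substitute $\widetilde{y}_{k+1} = \widetilde{x}_k - \nabla f(\widetilde{x}_k) = x_k - g_k$, $\widetilde{y}_k = x_{k-1} - g_{k-1}$, and $(\widetilde{z}_k - \widetilde{x}_k)/\alpha_{k,k-1} = s_k$ (all from the hypothesis and the identity above) into the POGM update for $\widetilde{z}_{k+1}$, obtaining
\[
\widetilde{z}_{k+1} = (x_k - g_k) + \tfrac{\theta_k-1}{\theta_{k+1}}\bigl((x_k - x_{k-1}) - (g_k - g_{k-1}) + s_k\bigr) - \tfrac{\theta_k}{\theta_{k+1}} g_k .
\]
Then I would replace $x_k - x_{k-1}$ by the composite-extension update $-\sum_{j=0}^{k-1}\alpha_{k,j}w_j$ and use $g_{k-1}+s_k = w_{k-1}$ to rewrite the middle bracket as $-\sum_{j=0}^{k-2}\alpha_{k,j}w_j - (\alpha_{k,k-1}-1)w_{k-1} - g_k$. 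At this point the OGM stepsize recursion~\eqref{eq:ogmstep}---specifically $\alpha_{k+1,j} = \frac{\theta_k-1}{\theta_{k+1}}\alpha_{k,j}$ for $j \le k-2$ and $\alpha_{k+1,k-1} = \frac{\theta_k-1}{\theta_{k+1}}(\alpha_{k,k-1}-1)$---turns the $w_j$ contributions into $-\sum_{j=0}^{k-1}\alpha_{k+1,j}w_j$, while the $g_k$ contributions collapse via $1 + \frac{\theta_k-1}{\theta_{k+1}} + \frac{\theta_k}{\theta_{k+1}} = 1 + \frac{2\theta_k-1}{\theta_{k+1}} = \alpha_{k+1,k}$. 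This yields exactly $\widetilde{z}_{k+1} = x_k - \sum_{j=0}^{k-1}\alpha_{k+1,j}w_j - \alpha_{k+1,k}g_k = \zeta_{k+1}$, closing the induction.

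\textbf{Base case and main obstacle.} For the base case I would check $k=0,1$ directly: the initialization gives $\widetilde{x}_0 = \widetilde{y}_0 = \widetilde{z}_0 = x_0$, and because $\theta_0 = 1$ the coefficient $\frac{\theta_0-1}{\theta_1}$ vanishes, which both disposes of the formally undefined term $\frac{z_0 - x_0}{\alpha_{0,-1}}$ and reduces the recursion to $\widetilde{z}_1 = (x_0 - g_0) - \frac{1}{\theta_1}g_0 = x_0 - \alpha_{1,0}g_0 = \zeta_1$, hence $\widetilde{x}_1 = \prox_{\alpha_{1,0}h}(\zeta_1) = x_1$. I expect the only genuinely delicate point to be the handling of the index $j = k-1$: the term $-\alpha_{k,k-1}w_{k-1}$ arising from $x_k - x_{k-1}$ must be split as $-(\alpha_{k,k-1}-1)w_{k-1} - w_{k-1}$, so that the first piece lines up with the ``$-1$'' shift built into the $j = k-1$ branch of~\eqref{eq:ogmstep} while the second piece $-w_{k-1} = -g_{k-1} - s_k$ cancels the $+g_{k-1}$ and $+s_k$ already present in the bracket; once this is seen the remaining manipulations are routine. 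Finally, single-valuedness of $\prox_{\alpha h}$ for convex $h$ makes all iterates and all subgradients $s_k$ unambiguous, so no choices are involved, and the claimed equality $\widetilde{x}_k = x_k$ for $1 \le k \le n$ follows.
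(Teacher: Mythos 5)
Your proof is correct and follows essentially the same route as the paper's: a strong induction that tracks both $\widetilde{x}_k = x_k$ and the proximal input $\widetilde{z}_k = \zeta_k$, where your $\zeta_k = x_k + \alpha_{k,k-1}s_k$ is exactly the paper's auxiliary quantity $x_k^{-}$, and the inductive step is closed by the same substitutions and the same use of the stepsize recursion~\eqref{eq:ogmstep}. The only differences are notational, including your explicit handling of the $j=k-1$ split, which the paper performs implicitly in its chain of equalities.
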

\begin{proof}
    Define $x_0^{-} := x_0$ and $x_k^{-} := x_k + \alpha_{k, k-1}s_k$ for $1 \leq k \leq n$. We use (strong) induction on $k$ to show that 
    \[\widetilde{x}_k = x_k, \widetilde{z}_k = x_k^{-}
    \]
    for all $k$ (the case $k = 0$ holds by definition).

    For $k = 1$, we have
    \[\widetilde{z}_1 = \widetilde{y}_1 + \frac{\theta_0}{\theta_1}(\widetilde{y}_1 - \widetilde{x}_0) = \widetilde{x}_0 - \left(1 + \frac{\theta_0}{\theta_1}\right) \nabla f(\widetilde{x}_0) = x_0 - \alpha_{1, 0}g_0 = x_1^{-}\,,
    \]
    and thus $\widetilde{x}_1 = \prox_{\alpha_{1, 0}h}(x_1^{-}) = x_1$.
    Assume that the result hold for $1, \dots, k$. Then
    \begin{align*}
        \widetilde{z}_{k+1} &= \widetilde{y}_{k+1} + \frac{\theta_k-1}{\theta_{k+1}}\left(\wt{y}_{k+1}-\wt{y}_k + \frac{\wt{z}_k-\wt{x}_k}{\alpha_{k, k-1}}\right) + \frac{\theta_{k}}{\theta_{k+1}}(\wt{y}_{k+1}-\wt{x}_k) \\
        &= x_k - g_k + \frac{\theta_{k}-1}{\theta_{k+1}}(x_k -x_{k-1} - (g_k - g_{k-1}) + s_k) -\frac{\theta_k}{\theta_{k+1}}g_k \\
        &= x_k + \frac{\theta_{k}-1}{\theta_{k+1}}(g_{k-1} + s_k) - \left(1 + \frac{2\theta_{k} - 1}{\theta_{k+1}}\right)g_k - \frac{\theta_{k}-1}{\theta_{k+1}}\sum_{j=0}^{k-1}\alpha_{k, j}(g_j + s_{j+1}) \\
        &= x_k- \sum_{j=0}^{k-2}\frac{\theta_k-1}{\theta_{k+1}}\alpha_{k, j}(g_j + s_{j+1}) - \frac{\theta_{k}-1}{\theta_{k+1}}(\alpha_{k, k-1}-1)(g_{k-1} + s_k) - \left(1+\frac{2\theta_k-1}{\theta_{k+1}}\right)g_k \\
        &= x_{k+1}^{-}\,,
    \end{align*}
    where the first equality is from the induction hypothesis and the last equality is from the definition of $\alpha_{k+1, j}$. Thus, $\widetilde{x}_{k+1} = x_{k+1}$.
\end{proof}

\section{Applications to gradient norm minimization}\label{sec:app-grad}

We now consider the performance metric of gradient norm rather than objective function. As in  Section \ref{sec:app-func}, we consider composite extensions of both stepsize-accelerated GD and OGM-G (i.e., a gradient norm version of OGM). For each result, it only suffices to prove item (i) of Theorem \ref{thm:main-grad}, as item (ii) is guaranteed by a certain choice of $\xi'$ (see the details therein). As the arguments are fairly similar to that in Section \ref{sec:app-func}, we only present the main conceptual steps and defer the details to Appendix \ref{appendix:appgrad}.

\subsection{Stepsize-accelerated proximal GD}

To the best of our knowledge, the silver stepsize schedule does not admit coefficients $\lambda'$ satisfying \eqref{eq:vanilla-grad}. However, a slightly modified stepsize presented in \citep{gsw24} satisfies the condition. Compared to the silver stepsizes, this stepsize schedule has larger values in its first half.

\begin{defin}
    For $k \in \N$, define $\tau_1 := 4$ and (recall $\rho = 1+\sqrt{2}$)
    \[\tau_{k+1} := \frac{1}{2}\left(\tau_k + 4\rho^k + \sqrt{\tau_k^2 + 8\rho^k \tau_k}\right)\,.
    \]
    Also, define $\eta_k := 1 + \frac{\sqrt{\tau_k^2 + 8\rho^k\tau_k}-\tau_k}{4}$. Finally, define the stepsize $w^{(k)}$ (of length $n = 2^k-1$) as $w^{(1)} = [3/2], $
    \[w^{(k+1)} := [w^{(k)}, \eta_k, \pi^{(k)}]\,.
    \]
\end{defin}

The multipliers $\lambda' = \lambda'^{(k)}$ for stepsize $w^{(k)}$ are similar to those for silver stepsizes in terms of their recursive definition.

\begin{prop}[Multipliers for unconstrained GD; {\citep[Proposition 1]{gsw24}}]\label{prop:gswknown} For $k \in \N$ and $n = 2^k-1$, GD with stepsize $w^{(k)}$ satisfies \eqref{eq:vanilla-grad} with
\begin{align*}
    \lambda' &= \lambda'^{(k)} , \\
    R'_n &= \tau_k - 1.
\end{align*}
Here, $\lambda'^{(k)}$ is recursively defined as
\begin{align*}
    \lambda'^{(1)} &:= \ \begin{bNiceArray}{cc}[first-row, first-col]
        & 0 & 1 \\
        0 & 0 & 2 \\
        1 & 1 & 0 \\
    \end{bNiceArray}, \\
    \lambda'^{(k+1)} &:= \underbrace{\lambda'^{(k+1), \rec}}_{\text{recursion}} \;\;\;+ \underbrace{\lambda'^{(k+1), \sparse}}_{\text{sparse correction}} + \underbrace{\lambda'^{(k+1), \lr}}_{\text{low-rank correction}},
\end{align*}
where $\lambda'^{(k+1), \rec}, \lambda'^{(k+1), \sparse}, \lambda'^{(k+1), \lr}$ are defined as (with $\blambda^{(k)}$ as previously defined in Proposition \ref{prop:sssknown})
\begin{align*}
    \lambda'^{(k+1), \rec}_{i, j} &:= \lambda'^{(k)}_{i, j} \bm{1}_{\{0 \leq i, j \leq n\}} + \frac{\tau_{k+1}}{\rho^{2k}} \blambda^{(k)}_{i-n-1, j-n-1}\bm{1}_{\{n+1 \leq i, j \leq 2n + 1\}}, \\
    \lambda'^{(k+1), \sparse}_{i, j} &:= \frac{\tau_{k+1}}{2\rho^{2k}} \bm{1}_{\{(i, j) = (n, 2n+1)\}} + \left(\frac{\tau_{k+1}}{2\rho^k}-1\right) \bm{1}_{\{(i, j) = (2n+1, n)\}}, \\
    \blambda'^{(k+1), \lr}_{i, j} &:= \frac{\tau_{k+1}}{2\rho^{2k}} \pi^{(k)}_{j-n} \bm{1}_{\{i = n, n+1 \leq j \leq 2n\}} + \frac{\tau_{k+1}}{2\rho^{2k}} \pi^{(k)}_{j-n} \bm{1}_{\{i = 2n+1, n+1 \leq j \leq 2n\}}.
\end{align*}
\end{prop}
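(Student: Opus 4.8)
The plan is to prove the algebraic identity \eqref{eq:vanilla-grad} by induction on $k$, mirroring the recursive ``gluing'' structure of the stepsizes $w^{(k)}$ and of the multipliers $\lambda'^{(k)}$; the nonnegativity $\lambda'^{(k)}\geq 0$ implicit in the statement is tracked in parallel. Since \eqref{eq:vanilla-grad} is a polynomial identity, it suffices to expand each $Q_{ij}$, substitute the recursion $x_i = x_{i-1} - \sum_l \alpha_{i,l} g_l$ to eliminate iterate differences, and then match coefficients of two objects: the linear form in $\{f_i\}$, and the quadratic form in $\{g_i\}$ (there is no dependence on $x_0$ itself, unlike in \eqref{eq:vanilla-func}). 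The base case $k=1$ is a one-line check: with $x_1 = x_0 - \tfrac32 g_0$, $\lambda'^{(1)}_{0,1}=2$, $\lambda'^{(1)}_{1,0}=1$, one verifies $2Q_{01}+Q_{10} = f_0 - f_1 - \tfrac32\norm{g_1}^2$, matching $R_1' = \tau_1 - 1 = 3$.

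For the inductive step, write $n = 2^k-1$ and split the iterates of GD with $w^{(k+1)} = [w^{(k)}, \eta_k, \pi^{(k)}]$ into a first sub-run $x_0,\dots,x_n$ (GD with $w^{(k)}$), the bridge step $x_{n+1} = x_n - \eta_k g_n$, and a second sub-run which, after relabeling $\hat x_j := x_{n+1+j}$, is GD with the silver stepsizes $\pi^{(k)}$. Partitioning $\lambda'^{(k+1)} = \lambda'^{(k+1),\rec} + \lambda'^{(k+1),\sparse} + \lambda'^{(k+1),\lr}$, the sum $\sum_{i,j}\lambda'^{(k+1)}_{i,j}Q_{i,j}$ decomposes accordingly. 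The first recursion block reproduces $\sum\lambda'^{(k)}_{ij}Q_{ij}$ over indices $\{0,\dots,n\}$, which by the induction hypothesis equals $f_0 - f_n - \tfrac{\tau_k-1}{2}\norm{g_n}^2$. The second recursion block is $\tfrac{\tau_{k+1}}{\rho^{2k}}\sum\blambda^{(k)}_{i'j'}\hat Q_{i'j'}$; here I would invoke Proposition \ref{prop:sssknown} for the silver run, isolate the $\blambda^{(k)}$ portion by subtracting the $\ulambda^{(k)}$ portion (using $\ulambda^{(k)}=\gamma$ and $\sum\ulambda^{(k)} = 2\rho^k-1$), and observe that the free vectors $\hat x_0$ and $\hat x_*$ cancel identically, leaving a closed-form expression in $\{\hat f_j\}$ and $\{\hat g_j\}$ only (whose quadratic part is, after a further cross-term cancellation, purely diagonal of the form $\tfrac12\sum_j(\gamma_j-\gamma_j^2)\norm{\hat g_j}^2$). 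The remaining contributions---the sparse corrections $Q_{n,2n+1}, Q_{2n+1,n}$ and the low-rank corrections $Q_{n,n+1+j'}, Q_{2n+1,n+1+j'}$ weighted by $\pi^{(k)}$---are expanded using the bridge relation $x_n - x_{n+1+j'} = \eta_k g_n + \sum_{l<j'}\pi^{(k)}_{l+1}\hat g_l$.

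Collecting coefficients then splits into two tasks. The linear ($\{f_i\}$) part is a short combinatorial identity on $\lambda'^{(k+1)}$: one checks that the total coefficient of $f_0$ is $1$, of $f_{2n+1}$ is $-1$, and of every other $f_i$ is $0$, using $\sum\pi^{(k)} = \rho^k-1$ and the recursion for $\lambda'^{(k)}$; for instance, cancellation of the $-f_n$ leftover of the first block against the corrections uses $\tfrac{\tau_{k+1}}{2\rho^{2k}}\bigl(1 + (\rho^k-1)\bigr) = \tfrac{\tau_{k+1}}{2\rho^k}$. The quadratic ($\{g_i\}$) part is the main obstacle: one must show that the leftover $-\tfrac{\tau_k-1}{2}\norm{g_n}^2$ from the first block, the folded silver quadratic form in $\hat g_0,\dots,\hat g_n$, and the diagonal and cross contributions of the bridge and the sparse and low-rank corrections all combine so that every $\angs{g_i}{g_j}$ with $\{i,j\}\neq\{2n+1\}$ vanishes, while $\norm{g_{2n+1}}^2$ acquires coefficient exactly $-\tfrac{\tau_{k+1}-1}{2}$. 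The definitions of $\eta_k$ and $\tau_{k+1}$ via the nested radicals are precisely the scalar roots of the resulting quadratic equations forcing this cancellation, and the delicate bookkeeping is that $\pi^{(k)}$ appears simultaneously as the stepsizes of the second sub-run and as the weights in the low-rank correction, so its two roles must be reconciled entry by entry. Finally, nonnegativity of $\lambda'^{(k+1)}$ follows from the induction hypothesis together with $\blambda^{(k)}\geq 0$ and $\tau_{k+1}\geq 2\rho^k$ (immediate from the definition of $\tau_{k+1}$), which makes the sparse coefficient $\tfrac{\tau_{k+1}}{2\rho^k}-1$ nonnegative.
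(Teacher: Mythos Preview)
The paper does not give its own proof of this proposition: it is quoted verbatim from \citep[Proposition 1]{gsw24} and used as a black-box input to the composite-extension framework. There is therefore no in-paper argument to compare your attempt against.

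That said, your recursive-gluing induction is the natural strategy and matches what the cited source does. Your intermediate claims are correct. In particular, your key reduction for the second sub-run is right: subtracting the $\ulambda^{(k)}$ row from the silver identity of Proposition~\ref{prop:sssknown} and using $\hat x_0 - \hat x_j = \sum_{l<j}\gamma_l \hat g_l$ (with $\gamma = [\pi^{(k)},\rho^k]$), the cross terms from $-\tfrac12\lVert\sum_i\gamma_i\hat g_i\rVert^2$ cancel exactly against those from $\sum_j\gamma_j\langle \hat g_j,\hat x_0-\hat x_j\rangle$, leaving
\[
\sum_{i\neq *,j}\blambda^{(k)}_{ij}\hat Q_{ij}
= -(2\rho^k-1)\hat f_n + \sum_{j}\gamma_j \hat f_j + \tfrac12\sum_j(\gamma_j-\gamma_j^2)\lVert \hat g_j\rVert^2\,,
\]
as you claimed. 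The base case and the linear-form bookkeeping (using $\sum\pi^{(k)} = \rho^k-1$) are also correct, as is the nonnegativity argument via $\tau_{k+1}\geq 2\rho^k$. The one part you have identified but not carried out is the quadratic-form cancellation at the bridge, where the scalar identities $\eta_k = 1 + \rho^k(1 - 2\rho^k/\tau_{k+1})$ and the defining quadratic for $\tau_{k+1}$ are exactly what force the remaining $\langle g_i,g_j\rangle$ coefficients to vanish and leave $-\tfrac{\tau_{k+1}-1}{2}\lVert g_{2n+1}\rVert^2$; this is straightforward once the diagonal form above is in hand.
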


Using these multipliers, we obtain an $O(1/n^{\log_2 \rho})$ rate for proximal GD with the same stepsizes. This establishes the phenomenon of stepsize acceleration for the composite setting, when measured in gradient norm---previously only known for the unconstrained setting \citep{gsw24}.

\begin{theorem}[Convergence rate of proximal GD]\label{thm:sss-g}
    For $k \in \N$ and $n = 2^k-1$, proximal GD with stepsize $w^{(k)}$ satisfies
    \[\norm{\nabla f(x_n) + s_n}^2 \leq \frac{2\sqrt{2}}{\tau_k}(F_0 - F_n) \sim \frac{\sqrt{2}(\rho-\sqrt{\rho})}{n^{\log_2\rho}}(F_0 - F_n) \,,
    \]
    where $s_n \in \partial h(x_n)$.
\end{theorem}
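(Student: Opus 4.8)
The plan is to invoke Theorem~\ref{thm:main-grad} with the certificate of Proposition~\ref{prop:gswknown}, i.e., $\lambda'=\lambda'^{(k)}$ and $R'_n=\tau_k-1$; the multipliers $\mu'$ and the matrix $S'$ are then furnished in closed form by Definition~\ref{def:coef-grad}, and only items (i) and (ii) of the theorem must be verified (and item (i) here is just $\mu'_{ij}\geq 0$, since $v'$ is given outright in Definition~\ref{def:coef-grad}). Item (ii) is handled by the permitted choice $\xi'=1-\frac{\lambda'_{n-1,n}+\lambda'_{n,n-1}}{R'_n}$, for which $R'_n(1-\xi')=\lambda'^{(k)}_{n-1,n}+\lambda'^{(k)}_{n,n-1}>0$, so the theorem's guarantee becomes $\norm{g_n+s_n}^2\leq\frac{2}{\lambda'^{(k)}_{n-1,n}+\lambda'^{(k)}_{n,n-1}}(F_0-F_n)$; the stated non-asymptotic rate then follows once we check $\lambda'^{(k)}_{n-1,n}+\lambda'^{(k)}_{n,n-1}\geq\tau_k/\sqrt{2}$. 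Tracking those two entries through the recursion in Proposition~\ref{prop:gswknown} (they receive contributions only from $\lambda'^{(k),\rec}$, plus a single $\lambda'^{(k),\lr}$ term for the $(n,n-1)$ entry) reduces this to the scalar $b_k:=\blambda^{(k)}_{n-1,n}+\blambda^{(k)}_{n,n-1}$ attached to the silver multipliers, which obeys $b_{k+1}=\rho^2 b_k+\rho\,\pi^{(k)}_{2^k-1}=\rho^2 b_k+\sqrt{2}\rho$ with $b_1=\rho+1$, hence $b_k=(\rho^{2k}-1)/\sqrt{2}$; substituting back gives $\lambda'^{(k)}_{n-1,n}+\lambda'^{(k)}_{n,n-1}=\tau_k/\sqrt{2}$ for $k\geq 2$ (and $=3\geq 2\sqrt{2}=\tau_1/\sqrt{2}$ for $k=1$). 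The asymptotic form $\frac{2\sqrt{2}}{\tau_k}\sim\frac{\sqrt{2}(\rho-\sqrt{\rho})}{n^{\log_2\rho}}$ comes from plugging the ansatz $\tau_k\sim c\,\rho^k$ into $\tau_{k+1}=\tfrac12\bigl(\tau_k+4\rho^k+\sqrt{\tau_k^2+8\rho^k\tau_k}\bigr)$, which forces $c=2/(\rho-\sqrt{\rho})$, together with $\rho^k\sim n^{\log_2\rho}$ since $n=2^k-1$.

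The substance of the argument is item (i), $\mu'_{ij}\geq 0$. Exactly as for proximal GD in Theorem~\ref{thm:sss}, GD has diagonal stepsize matrix $H=\diag(w^{(k)})$, so \eqref{eq:mutilde-grad} gives $\mtilde'=-\widetilde{H}^{-1}(\widetilde{H}\ltilde')^T=-\widetilde{H}^{-1}(\ltilde')^T\widetilde{H}^T$, which is precisely the form treated by Lemma~\ref{lem:partialsum}: every entry of $\mtilde'$ is a stepsize-weighted difference of partial column sums of two adjacent columns of $\ltilde'$. Hence $\mu'\geq 0$ reduces to a sign analysis for adjacent columns of $\ltilde'$, which (mirroring Lemma~\ref{lem:ssspartialsum}, up to the bookkeeping relating columns of $\ltilde'$ to entries of $\lambda'^{(k)}$) amounts to the comparison
\[
\frac{\lambda'^{(k)}_{i,j-1}}{w^{(k)}_{j-1}}-\frac{\lambda'^{(k)}_{i,j}}{w^{(k)}_{j}}\ \begin{cases}\geq 0 & i\leq j-2\\ \leq 0 & i\geq j+1\end{cases}
\]
with the usual boundary convention at $j=n$. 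Granting this, $\mu'_{ij}\geq 0$ follows by summing the signed differences along each column, handling the last row of $\mtilde'$ and the diagonal $-\mu'_{\bullet,j}$ entries separately, just as in the proof of Theorem~\ref{thm:sss}.

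The main obstacle is establishing this comparison for $\lambda'^{(k)}$, which I would prove by induction on $k$ (deferred to Appendix~\ref{appendix:appgrad}). The base cases $k=1,2$ reduce to finitely many explicit inequalities. For the inductive step, split $\lambda'^{(k+1)}=\lambda'^{(k+1),\rec}+\lambda'^{(k+1),\sparse}+\lambda'^{(k+1),\lr}$: on the two diagonal blocks the $\rec$ part inherits the required sign pattern from the induction hypothesis applied to $\lambda'^{(k)}$ (first block, stepsizes $w^{(k)}$) and to $\blambda^{(k)}$ (second block, stepsizes $\pi^{(k)}$, for which the analogous comparison is Lemma~\ref{lem:ssspartialsum} itself), after rescaling by $\tau_{k+1}/\rho^{2k}$; the $\sparse$ and $\lr$ corrections live on a handful of positions near the seam index $n=2^k-1$, where the residual inequalities are checked directly via the recursions for $\pi^{(k)}$, $\eta_k$, and $\tau_k$. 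The only genuinely new feature relative to Lemma~\ref{lem:ssspartialsum} is the asymmetry of the schedule $w^{(k+1)}=[w^{(k)},\eta_k,\pi^{(k)}]$ and the attendant mixing of $\lambda'^{(k)}$ and $\blambda^{(k)}$ across the two blocks, which lengthens the case analysis but leaves it conceptually routine.
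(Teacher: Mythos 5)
Your proposal is correct and follows essentially the same route as the paper: invoke Theorem~\ref{thm:main-grad} with the certificate of Proposition~\ref{prop:gswknown}, reduce item (i) to the adjacent-column comparison inequality for $\lambda'^{(k)}$ proved by induction (this is exactly the paper's Lemma~\ref{lem:sss-gpartialsum}, with the same block/seam case split), and handle item (ii) via $\xi'=1-\frac{\lambda'_{n-1,n}+\lambda'_{n,n-1}}{R'_n}$ together with the computation $\lambda'^{(k)}_{n-1,n}+\lambda'^{(k)}_{n,n-1}=\tau_k/\sqrt{2}$ for $k\geq 2$. The only cosmetic differences are that you track the seam entries through a single closed-form recursion $b_k=(\rho^{2k}-1)/\sqrt{2}$ rather than two separate recursions, and you derive the asymptotics of $\tau_k$ from a fixed-point ansatz where the paper cites \citep[Lemma 4]{gsw24}.
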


\begin{proof}[Proof sketch] The proof is similar to that of Theorem \ref{thm:sss}. A key to that result was a structured inequality in $\lambda^{(k)}$ (Lemma \ref{lem:ssspartialsum}), which also holds for $\lambda'^{(k)}$ from its recursive definition. See Appendix \ref{appendix:sss-g} for details. 
    
\end{proof}

\subsection{Proximal OGM-G}
OGM-G \citep{ogm-g} is a ($n$-step) first-order method with stepsize matrix $H$, whose entry is recursively defined as (recall the definition of $\{\theta_i: 0 \leq i \leq n\}$ from \eqref{eq:theta})
\[\alpha_{i+1, j} = \begin{cases}
    \frac{\theta_{n-j-1}-1}{\theta_{n-j}}\alpha_{i+1, j+1} & 0 \leq j \leq i-2 \\
    \frac{\theta_{n-j-1}-1}{\theta_{n-j}}(\alpha_{i+1, i}-1) & j = i-1 \\
    1 + \frac{2\theta_{n-i-1}-1}{\theta_{n-i}} & j = i\,.
\end{cases}
\]

As in the case of OGM, the multipliers $\lambda'$ for OGM-G are simple. Indeed, these can be considered as certain transformation of the multipliers $\lambda$ for OGM in an appropriate sense \citep{hduality}.

\begin{prop}[Multipliers for OGM-G; {\citep[Theorem 6.1]{ogm-g}}]
    OGM-G satisfies \eqref{eq:vanilla-grad} with
    \begin{align*}
        (1/\theta_n^2)\lambda'_{i, j} &= \begin{cases}
            \frac{1}{2\theta_{n-j}^2} & j = i+1, 0 \leq i \leq n-1 \\
            \frac{1}{2\theta_{n-j-1}^2} - \frac{1}{2\theta_{n-j}^2} & i = n, 1 \leq j \leq n-1  \\
            \frac{1}{2\theta_{n-1}^2} - \frac{1}{\theta_n^2} & i = n, j = 0\\
            0 & \text{else},
        \end{cases}\\
        R_n' &= \theta_n^2-1 .
    \end{align*}
\end{prop}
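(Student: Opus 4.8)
The statement is exactly the assertion that the PEP identity \eqref{eq:vanilla-grad}, namely $\sum_{i,j}\lambda'_{ij}Q_{ij} = -\tfrac{R'_n}{2}\norm{g_n}^2 + f_0 - f_n$, holds for the OGM-G iterates with the stated $\lambda'$ and $R'_n = \theta_n^2-1$. Since this is precisely \citep[Theorem 6.1]{ogm-g}, the shortest route is to cite it; below I describe how I would instead verify it from scratch, following the coefficient-matching methodology used elsewhere in the paper. The plan is to expand $Q_{ij} = f_i - f_j - \angs{g_j}{x_i-x_j} - \tfrac12\norm{g_i-g_j}^2$, substitute $x_i - x_j = -\sum_{l<i}\widetilde{\alpha}_{i,l}g_l + \sum_{l<j}\widetilde{\alpha}_{j,l}g_l$ from the $\widetilde{H}$-notation (the $x_0$ terms cancel in each difference, and here $\lambda'$ carries no $*$-index), and thereby write the left-hand side as a linear form in $\{f_m\}_{m=0}^n$ plus a quadratic form in $\{g_l\}_{l=0}^n$. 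It then suffices to check (a) the coefficient of each $f_m$ equals $\mathbf{1}_{\{m=0\}} - \mathbf{1}_{\{m=n\}}$, and (b) the quadratic form equals $-\tfrac{\theta_n^2-1}{2}\norm{g_n}^2$.

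For (a), the coefficient of $f_m$ is $\lambda'_{m,\bullet} - \lambda'_{\bullet,m}$. Using that $\lambda'$ is supported only on the superdiagonal pairs $(i,i+1)$ for $0\le i\le n-1$ and on the last-row pairs $(n,j)$ for $0\le j\le n-1$, one finds for $1\le m\le n-1$ that $\lambda'_{m,\bullet} = \lambda'_{m,m+1}$ while $\lambda'_{\bullet,m} = \lambda'_{m-1,m} + \lambda'_{n,m}$, and the stated formulas give the telescoping cancellation $\tfrac{1}{2\theta_{n-m-1}^2} = \tfrac{1}{2\theta_{n-m}^2} + \big(\tfrac{1}{2\theta_{n-m-1}^2} - \tfrac{1}{2\theta_{n-m}^2}\big)$ after dividing by $\theta_n^2$, so this coefficient vanishes; the boundary cases $m=0$ and $m=n$ reduce, using the definition of $\lambda'_{n,0}$ and the telescoping of the full last row, to $\theta_n^2\cdot\tfrac{1}{\theta_n^2} = 1$ and $-1$ respectively. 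This step is short.

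The real work is (b): after the substitution, the quadratic part is an explicit symmetric $(n{+}1)\times(n{+}1)$ matrix in $\{g_l\}$ built from $\lambda'$ and the $\widetilde{\alpha}_{i,l}$, and I must show it equals $-\tfrac{\theta_n^2-1}{2}\,e_n e_n^T$. I see two ways to do this. The direct route is to insert the OGM-G stepsize recursion together with the $\theta$-identities $\theta_{i+1}^2 - \theta_{i+1} - \theta_i^2 = 0$ (for $0\le i\le n-2$) and $\theta_n^2 - \theta_n - 2\theta_{n-1}^2 = 0$, and track the cancellations by induction on $n$; this mirrors the nonincreasing-potential argument of \citep[Theorem 6.1]{ogm-g}, where telescoping a suitable Lyapunov function produces the identity directly. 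The cleaner route, anticipated by the remark preceding the proposition, is to invoke H-duality \citep{hduality}: OGM-G is the H-dual of OGM, the multipliers transform by the explicit rule taking $\lambda$ (for OGM) to $\lambda'$ (for OGM-G), and H-duality carries the already-established OGM certificate \citep[Theorem 2]{ogm}---including its quadratic residual---over to the claimed OGM-G certificate. Either way, (b) reduces to the already-known OGM identity or to a mechanical $\theta$-recursion induction.

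The main obstacle, as with every PEP dual certificate, is precisely step (b): the linear check is essentially a one-line telescoping computation, but confirming the complete cancellation in the quadratic form is where the content lies. Exploiting H-duality---or, equivalently, the potential-function bookkeeping of \citep{ogm-g}---is what keeps this from degenerating into a brute-force $(n{+}1)\times(n{+}1)$ matrix identity.
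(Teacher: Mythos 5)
Your proposal is correct and matches the paper's treatment: the paper offers no proof of this proposition, stating it purely as a citation of \citep[Theorem 6.1]{ogm-g}, which is exactly the route you identify as primary. Your supplementary verification sketch is also sound---the linear (telescoping) check agrees with the identities the paper later records in \eqref{eq:vanilla-grad-linear}, and the H-duality route for the quadratic part is precisely the connection the paper alludes to in the sentence preceding the proposition.
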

Using these multipliers, we establish the following convergence guarantee for the composite extension of OGM-G. This bound is almost $1/10$ of the previously known best result in \citep{hduality}.

\begin{theorem}[Convergence rate of proximal OGM-G]\label{thm:ogm-g}
    For $n \geq 2$, the composite extension of OGM-G satisfies\footnote{For $n = 1$, $\norm{\nabla f(x_n) + s_n}^2 \leq \frac{8}{3\theta_n^2}(F_0 - F_n) = \frac{2}{3}(F_0 - F_n) $ holds. See the proof for details.}
    \[\norm{\nabla f(x_n) + s_n}^2\leq \frac{2(\sqrt{5}-1)}{\theta_n^2}(F_0 - F_n) \sim \frac{4(\sqrt{5}-1)}{n^2}(F_0-F_n) \,,
    \]
    where $s_n \in \partial h(x_n)$.
\end{theorem}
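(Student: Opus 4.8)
The plan is to invoke Theorem~\ref{thm:main-grad} with the OGM-G multipliers $\lambda'$ and $R_n' = \theta_n^2-1$ from the preceding proposition, and then check its two hypotheses (i) and (ii). Hypothesis (ii) holds for free once we take $\xi' = 1 - \tfrac{\lambda'_{n-1,n}+\lambda'_{n,n-1}}{R_n'}$ (the choice flagged in the theorem, which makes the relevant Schur complement of $S'$ diagonally dominant), so essentially all of the substantive work is the entrywise nonnegativity check in hypothesis (i); the constant factor then falls out of the identity $R_n'(1-\xi') = \lambda'_{n-1,n}+\lambda'_{n,n-1}$ together with the guarantee $\norm{g_n+s_n}^2 \le \tfrac{2}{R_n'(1-\xi')}(F_0-F_n)$.

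First I would nail down the constant. From the stated formulas, $\lambda'_{n-1,n} = \theta_n^2/(2\theta_0^2) = \theta_n^2/2$ and $\lambda'_{n,n-1} = \theta_n^2\bigl(\tfrac{1}{2\theta_0^2}-\tfrac{1}{2\theta_1^2}\bigr)$; using $\theta_0 = 1$ and $\theta_1^2 = \theta_1+1 = \tfrac{3+\sqrt5}{2}$ gives $\lambda'_{n,n-1} = \theta_n^2\cdot\tfrac{\sqrt5-1}{4}$, hence $R_n'(1-\xi') = \lambda'_{n-1,n}+\lambda'_{n,n-1} = \theta_n^2\cdot\tfrac{\sqrt5+1}{4}$. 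Plugging into the guarantee of Theorem~\ref{thm:main-grad} yields $\norm{g_n+s_n}^2 \le \tfrac{8}{(\sqrt5+1)\theta_n^2}(F_0-F_n) = \tfrac{2(\sqrt5-1)}{\theta_n^2}(F_0-F_n)$, and the asymptotic form follows from $\theta_n^2 \sim n^2/2$. The case $n=1$ is degenerate (the formula for $\lambda'_{n,n-1}$ above requires $n\ge 2$) and would be treated by a short direct computation, as recorded in the footnote to the statement.

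The main obstacle is verifying (i), i.e., that every entry of $\mtilde' = -\widetilde H^{-1}(\widetilde H\ltilde')^{T}$ is nonnegative. I would follow the template of the proof of Theorem~\ref{thm:ogm}. The first ingredient is a factorization of the OGM-G stepsize matrix in the spirit of Lemma~\ref{lem:ogmHmatrix}, obtained from the known algebraic identities for the OGM-G stepsizes (the analog of \citep[Lemma 4]{ogm}); since the OGM-G recursion is the row/column-and-$\theta$-index reversed counterpart of the OGM recursion, I expect $H$ to factor through a diagonal matrix and unit upper-triangular matrices built from ratios of the form $\theta_{n-i-1}/\theta_{n-i}$. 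Combining this with the explicit banded form of $\ltilde'$ (supported on essentially a single off-diagonal plus the last row, mirroring the sparsity of $\lambda'$) should reduce the computation of $-\widetilde H^{-1}(\widetilde H\ltilde')^T$, column by column, to solving an upper-triangular linear system with an explicit right-hand side, exactly as in Lemma~\ref{lem:ogmls}. Solving that system from the bottom up should reveal that each solution vector has a single negative coordinate with all other coordinates positive; multiplying by the remaining bidiagonal factor and invoking monotonicity of $\{\theta_i\}$ should then turn these into nonnegative $\mu'_{ij}$, with the column sums producing $\mu'_{*,j}$ via an $\ell_1$-type telescoping as in the OGM case. I expect this sign bookkeeping---pinning down the correct OGM-G factorization, tracking the lone negative coordinate through the bidiagonal multiplication, and controlling the $\mu'_{*,j}$ sums---to be the most delicate part; as a possible shortcut, one might instead transport the structure already established for OGM through the H-duality correspondence between the OGM and OGM-G multipliers \citep{hduality}.
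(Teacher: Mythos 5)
Your high-level strategy matches the paper exactly: invoke Theorem~\ref{thm:main-grad} with the OGM-G multipliers, take $\xi' = 1 - \tfrac{\lambda'_{n-1,n}+\lambda'_{n,n-1}}{R_n'}$ so that item (ii) is automatic, and read off the constant from $R_n'(1-\xi') = \lambda'_{n-1,n}+\lambda'_{n,n-1} = \tfrac{\sqrt5+1}{4}\theta_n^2$. That computation, and the resulting rate $\tfrac{2(\sqrt5-1)}{\theta_n^2}$, are correct and identical to the paper's. The factorization of $H$ you anticipate is also right in spirit: the paper uses $H = U(\theta_n,\dots,\theta_1)^{-1}U(\varphi_n,\dots,\varphi_1)\diag(2\theta_{n-1},\dots,2\theta_0)$, the index-reversed analogue of Lemma~\ref{lem:ogmHmatrix}.

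The gap is in item (i), where you predict the verification goes ``exactly as in Lemma~\ref{lem:ogmls}'': that each column's linear-system solution has a single negative coordinate, and that nonnegativity of $\mu'_{ij}$ then follows from the bidiagonal multiplication alone. Neither holds here. For a typical column $j$, the solution $x'(j)$ of the triangular system has sign pattern $x'_{j+1}>0$, $x'_j<0$, $x'_{j-1}>0$, and then $x'_k<0$ for \emph{all} $1\le k\le j-2$ (Lemma~\ref{lem:ogmgls1}(a),(b)), so there are many negative coordinates. More importantly, the two summands in $\mtilde' = \ltilde' + \widetilde H^{-1}\lhat'$ are \emph{not} separately nonnegative: the $(j-1,j)$ entry of $\ltilde'$ is $-\tfrac{1}{2\theta_{n-j}^2}<0$ and must be absorbed by the corresponding entry of $\widetilde H^{-1}\lhat'$, while the $(j-2,j)$ entry of $\widetilde H^{-1}\lhat'$ is negative and must be absorbed by the $+\tfrac{1}{2\theta_{n-j+1}^2}$ entry of $\ltilde'$. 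Establishing these two cancellations is the real content of the proof (Lemma~\ref{lem:ogmgls1}(c),(d)), and it requires the quantitative bounds $\theta_{i+1}-\theta_i\in(\tfrac12,\tfrac34)$ and the pinning of $\varphi_i$ near $\tfrac32$ from Lemma~\ref{lem:thetaphi}; monotonicity of $\{\theta_i\}$ alone does not suffice. Separate arguments are also needed for the special columns $j\in\{1,n\}$ and for the row $\mu'_{0,j}$ (note the index here is $0$, not $*$, in the gradient-norm setting), where again the conclusion only holds for the sum of the two matrices. Your fallback of transporting the OGM structure via H-duality is not what the paper does and would face the same issue, since H-duality relates the $\lambda$'s, not the sign structure of the derived $\mu'$'s.
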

\begin{proof}[Proof sketch] The overall proof structure is similar to that of proximal OGM (Theorem \ref{thm:ogm}). The proof is more technical here (although straightforward) due to each component in the formula $\mu' = \ltilde' + \widetilde{H}^{-1}\lhat'$ (Definition \ref{def:coef-grad}) being complicated. In particular, for the nonnegativity of $\mu'$, some entries are nonnegative only through the sum of $\ltilde'$ and $\widetilde{H}^{-1}\lhat'$: for each $j$, the $(j-1, j)$ entry of $\ltilde'$ is negative and $(j-2, j)$ entry of $\widetilde{H}^{-1}\lhat'$ is negative. For details, see Appendix \ref{appendix:ogm-g}.
\end{proof}

We also derive the following efficient representation of the composite extension of OGM-G, which we call \emph{P-OGM-G} (proximal OGM-G). The derivation and analysis of this representation closely resemble those of POGM (Proposition \ref{prop:ogmeff}). P-OGM-G updates as (with $y_0 = z_0 = x_0$)
\begin{align*}
    y_{k+1} &= x_{k} - \nabla f(x_k), \\
    z_{k+1} &= y_{k+1} + \frac{(\theta_{n-k}-1)(2\theta_{n-k-1}-1)}{\theta_{n-k}(2\theta_{n-k}-1)}\left(y_{k+1}-y_k + \frac{z_k-x_k}{\alpha_{k, k-1}}\right) + \frac{2\theta_{n-k-1}-1}{2\theta_{n-k}-1}(y_{k+1}-x_k),\\
    x_{k+1} &= \prox_{\alpha_{k+1, k}h}(z_{k+1}),
\end{align*}
for $0 \leq k \leq n-1$ (recall that $\alpha_{k, k-1} = 1 + \frac{2\theta_{n-k}-1}{\theta_{n-k+1}}$; for $k = 0$, $\alpha_{k, k-1}$ is not used as $z_k - x_k = 0$), and outputs the final iterate $x_n$.

\begin{prop}[Efficient form as P-OGM-G] Fix $n \in \N$, and let $\{(\widetilde{x}_k, \widetilde{y}_k, \widetilde{z}_k): 0 \leq k \leq n\}$ be the iterates of P-OGM-G and $\{x_k: 0 \leq k \leq n\}$ be the iterates of the composite extension of OGM-G, with $\widetilde{x}_0 = x_0$. Then $\widetilde{x}_k = x_k$ for all $1 \leq k \leq n$. 
\end{prop}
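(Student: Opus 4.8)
The plan is to follow the proof of Proposition~\ref{prop:ogmeff} almost verbatim, replacing the OGM stepsize recursion by that of OGM-G and the POGM momentum coefficients $\tfrac{\theta_k-1}{\theta_{k+1}},\tfrac{\theta_k}{\theta_{k+1}}$ by the P-OGM-G coefficients $c_k := \frac{(\theta_{n-k}-1)(2\theta_{n-k-1}-1)}{\theta_{n-k}(2\theta_{n-k}-1)}$ and $d_k := \frac{2\theta_{n-k-1}-1}{2\theta_{n-k}-1}$. Concretely, I would set $x_0^{-} := x_0$ and $x_k^{-} := x_k + \alpha_{k, k-1}s_k$ for $1 \leq k \leq n$, where $\{x_k\}$ and $\{s_k\}$ are the iterates and subgradients of the composite extension of OGM-G (in its implementable form), and prove by strong induction on $k$ that $\wt{x}_k = x_k$ and $\wt{z}_k = x_k^{-}$, the case $k=0$ being immediate. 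The point is that once $\wt{z}_{k+1} = x_{k+1}^{-}$ is established, single-valuedness of the proximal operator together with $x_{k+1}^{-} - x_{k+1} = \alpha_{k+1, k}s_{k+1} \in \alpha_{k+1, k}\partial h(x_{k+1})$ gives $\wt{x}_{k+1} = \prox_{\alpha_{k+1, k}h}(x_{k+1}^{-}) = x_{k+1}$, closing the induction.

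For the inductive step I would expand $\wt{z}_{k+1}$ via the P-OGM-G update and substitute the induction hypotheses $\wt{y}_{k+1} = x_k - g_k$, $\wt{y}_k = x_{k-1} - g_{k-1}$, and $\frac{\wt{z}_k - \wt{x}_k}{\alpha_{k, k-1}} = \frac{x_k^{-} - x_k}{\alpha_{k, k-1}} = s_k$, so that the bracketed term becomes $x_k - x_{k-1} - (g_k - g_{k-1}) + s_k$ and $\wt{y}_{k+1} - \wt{x}_k = -g_k$; then substitute $x_k - x_{k-1} = -\sum_{j=0}^{k-1}\alpha_{k, j}(g_j + s_{j+1})$ from Definition~\ref{def:compositeextension} and regroup by the coefficient of each $g_j + s_{j+1}$. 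Since $x_{k+1}^{-} = x_k - \sum_{j=0}^{k-1}\alpha_{k+1, j}(g_j + s_{j+1}) - \alpha_{k+1, k}g_k$, the equality $\wt{z}_{k+1} = x_{k+1}^{-}$ reduces to the three coefficient identities $\alpha_{k+1, j} = c_k \alpha_{k, j}$ for $0 \leq j \leq k-2$, $\alpha_{k+1, k-1} = c_k(\alpha_{k, k-1} - 1)$, and $\alpha_{k+1, k} = 1 + c_k + d_k$; the two initial steps $k = 0,1$ should be checked by direct computation as in Proposition~\ref{prop:ogmeff}.

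Verifying these three identities is where the main (routine) work lies. The third follows from the diagonal case $\alpha_{k+1,k} = 1 + \frac{2\theta_{n-k-1}-1}{\theta_{n-k}}$ of the OGM-G recursion after the elementary simplification $c_k + d_k = (2\theta_{n-k-1}-1)\bigl(\frac{\theta_{n-k}-1}{\theta_{n-k}(2\theta_{n-k}-1)} + \frac{1}{2\theta_{n-k}-1}\bigr) = \frac{2\theta_{n-k-1}-1}{\theta_{n-k}}$. The second follows from the subdiagonal case $\alpha_{k+1,k-1} = \frac{\theta_{n-k}-1}{\theta_{n-k+1}}(\alpha_{k+1,k}-1)$ together with $\alpha_{k+1,k}-1 = \frac{2\theta_{n-k-1}-1}{\theta_{n-k}}$ and $\alpha_{k,k-1}-1 = \frac{2\theta_{n-k}-1}{\theta_{n-k+1}}$. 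Finally $\alpha_{k+1,j} = c_k\alpha_{k,j}$ for $j \leq k-2$ follows by a short downward induction on $j$: the base case $j=k-2$ uses $\alpha_{k+1,k-2} = \frac{\theta_{n-k+1}-1}{\theta_{n-k+2}}\alpha_{k+1,k-1}$, $\alpha_{k,k-2} = \frac{\theta_{n-k+1}-1}{\theta_{n-k+2}}(\alpha_{k,k-1}-1)$, and the previous identity, while the step from column $j+1$ to $j$ multiplies both $\alpha_{k+1,\cdot}$ and $\alpha_{k,\cdot}$ by the same factor $\frac{\theta_{n-j-1}-1}{\theta_{n-j}}$. The main thing to keep straight is the reversed indexing ($\theta_{n-j}$ in place of $\theta_j$) in the OGM-G recursion; apart from that, the computation is elementary, and the analysis then proceeds exactly as for POGM. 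As a sanity check one can also verify directly from Definition~\ref{def:compositeextension} that P-OGM-G reduces to OGM-G when $h\equiv 0$ and stays at the optimum, the two design principles shared with POGM.
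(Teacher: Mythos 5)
Your proposal is correct and follows essentially the same route as the paper: define $x_k^- := x_k + \alpha_{k,k-1}s_k$, induct on $\widetilde{x}_k = x_k$ and $\widetilde{z}_k = x_k^-$, expand the P-OGM-G update via the induction hypothesis, and match coefficients of $g_j + s_{j+1}$ against the OGM-G stepsize recursion (the three identities you state are exactly the stepsize relations the paper invokes by citing equation 32 of the OGM-G paper, and your verifications of them, including $c_k + d_k = \frac{2\theta_{n-k-1}-1}{\theta_{n-k}}$, are correct). The only difference is that you spell out the coefficient identities that the paper delegates to a citation.
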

\begin{proof}
    As in the proof of Proposition \ref{prop:ogmeff},
    by defining $x_0^{-} := x_0$ and $x_k^{-} := x_k + \alpha_{k, k-1}s_k$ for $1 \leq k \leq n$, we inductively show that $\widetilde{x}_k = x_k$ and $\widetilde{z}_k = x_k^{-}$. For $k = 0$ this holds by definition. For $k = 1$, we have
    \[\widetilde{z}_1 = \widetilde{y}_1 + \frac{(\theta_n-1)(2\theta_{n-1}-1)}{\theta_n(2\theta_n-1)}(\widetilde{y}_1 - \widetilde{y}_0) + \frac{2\theta_{n-1}-1}{2\theta_{n}-1}(\wt{y}_1 - \wt{x}_0) = \wt{x}_0 - \left(1+\frac{2\theta_{n-1}-1}{\theta_n}\right)\nabla f(\wt{x}_0) = x_0 - \alpha_{1, 0}g_0 = x_1^{-}\,,
    \]
    and thus $\widetilde{x}_1 = x_1$. Assume that the result holds for $1, \dots, k$. Then
    \begin{align*}
        \wt{z}_{k+1} &= \wt{y}_{k+1} + \frac{(\theta_{n-k}-1)(2\theta_{n-k-1}-1)}{\theta_{n-k}(2\theta_{n-k}-1)}\left(\wt{y}_{k+1}-\wt{y}_k + \frac{\wt{z}_k-\wt{x}_k}{\alpha_{k, k-1}}\right) + \frac{2\theta_{n-k-1}-1}{2\theta_{n-k}-1}(\wt{y}_{k+1}-\wt{x}_k) \\
        &= x_k - g_k + \frac{(\theta_{n-k}-1)(2\theta_{n-k-1}-1)}{\theta_{n-k}(2\theta_{n-k}-1)}(x_k - x_{k-1} - (g_k - g_{k-1}) + s_k) - \frac{2\theta_{n-k-1}-1}{2\theta_{n-k}-1}g_k \\
        &= x_k +  \frac{(\theta_{n-k}-1)(2\theta_{n-k-1}-1)}{\theta_{n-k}(2\theta_{n-k}-1)}(g_{k-1}+s_k) - \left(1+\frac{2\theta_{n-k-1}-1}{\theta_{n-k}}\right)g_k \\
        &\quad- \frac{(\theta_{n-k}-1)(2\theta_{n-k-1}-1)}{\theta_{n-k}(2\theta_{n-k}-1)}\sum_{j=0}^{k-1}\alpha_{k, j}(g_j + s_{j+1}) \\
        &= x_k - \sum_{j=0}^{k-2}\frac{(\theta_{n-k}-1)(2\theta_{n-k-1}-1)}{\theta_{n-k}(2\theta_{n-k}-1)}\alpha_{k, j}(g_j + s_{j+1}) \\
        &\quad- \frac{(\theta_{n-k}-1)(2\theta_{n-k-1}-1)}{\theta_{n-k}(2\theta_{n-k}-1)}(\alpha_{k, k-1}-1)(g_{k-1}+s_k) - \left(1+\frac{2\theta_{n-k-1}-1}{\theta_{n-k}}\right)g_k \\
        &= x_{k+1}^{-},
    \end{align*}
    where the first equality is from the induction hypothesis and the last equality is from \citep[equation 32]{ogm-g}. Thus, $\widetilde{x}_{k+1} = x_{k+1}$.
\end{proof}

\section{Discussion}\label{sec:discussion}
In this paper, we developed a general-purpose result for extending optimized first-order methods from the unconstrained setting to the composite setting, and we applied this to different combinations of algorithms and performance metrics. Our work suggests several directions for further inquiry, such as the following.

\paragraph*{Different classes of methods.} Our result connects methods in unconstrained and composite settings; can more general connections can be made? These can be, for example, between different problem settings (as in this paper), between different performance metrics (as in H-duality), or between more general algorithms (e.g., that use line search or adaptive updates). Developing these connections is a fundamental question in its own right and could help unify the design and analysis of algorithms.

\paragraph*{Further understanding of optimized methods.} The starting point for this work is the observation that optimized methods admit simple proof structures, namely the sum-of-squares term is of rank 0 or 1 (see the technical overview in Section \ref{subsec:overview}). This phenomenon holds beyond the optimized methods we covered here \citep[Chapter 5]{taylorhabilitation}, and for specific choices of stepsizes for GD it admits an equivalent characterization in terms of worst-case functions \citep[Proposition 4]{gswcomposing}. On the other hand, it is unclear whether proofs for ``non-optimized'' methods such as constant-stepsize GD or AGD admit such structures (see e.g., \citep{dt14, ogm}). It is an interesting open question to understand the generality of this phenomenon and whether it hints at an underlying unified theory for optimized methods.

\paragraph*{A full reduction.} While our main results (Theorems \ref{thm:main-func} and \ref{thm:main-grad}) provide reduction-style approaches that are much simpler to invoke than proving convergence rates from scratch, these reductions are not fully black-box as one needs to verify items (i) and (ii). 
It would be helpful for the design and analysis of algorithms if these reductions could be made fully black-box, or more generally 
if there are other unified approaches for solving the semidefinite programs arising from PEP-type analyses.

\section*{Acknowledgements} This work was partially supported by a Sloan Research Fellowship and a Seed Grant Award from Apple.

\addcontentsline{toc}{section}{References}
\bibliographystyle{plain}
\bibliography{ref}

\begin{thebibliography}{10}

\bibitem{altschulermsthesis}
Jason~M. Altschuler.
\newblock Greed, hedging, and acceleration in convex optimization.
\newblock Master's thesis, Massachusetts Institute of Technology, 2018.

\bibitem{ap23b}
Jason~M. Altschuler and Pablo~A. Parrilo.
\newblock Acceleration by stepsize hedging: Silver stepsize schedule for smooth convex optimization.
\newblock {\em Math. Program.}, 2024.

\bibitem{ap23a}
Jason~M. Altschuler and Pablo~A. Parrilo.
\newblock Acceleration by stepsize hedging: Multi-step descent and the silver stepsize schedule.
\newblock {\em J. ACM}, 72(2):1--38, 2025.

\bibitem{bh18}
Rina~Foygel Barber and Wooseok Ha.
\newblock Gradient descent with non-convex constraints: local concavity determines convergence.
\newblock {\em Inf. Inference}, 7(4):755--806, 2018.

\bibitem{pep-polyak}
Mathieu Barr\'e, Adrien Taylor, and Alexandre d'Aspremont.
\newblock Complexity guarantees for {P}olyak steps with momentum.
\newblock In {\em Conference on Learning Theory}, volume 125, pages 452--478. PMLR, 2020.

\bibitem{beckbook}
Amir Beck.
\newblock {\em First-order methods in optimization}, volume~25 of {\em MOS-SIAM Series on Optimization}.
\newblock Society for Industrial and Applied Mathematics, 2017.

\bibitem{mfista}
Amir Beck and Marc Teboulle.
\newblock Fast gradient-based algorithms for constrained total variation image denoising and deblurring problems.
\newblock {\em IEEE Trans. Image Process.}, 18(11):2419--2434, 2009.

\bibitem{fista}
Amir Beck and Marc Teboulle.
\newblock A fast iterative shrinkage-thresholding algorithm for linear inverse problems.
\newblock {\em SIAM J. Imaging Sci.}, 2(1):183--202, 2009.

\bibitem{proxgdsss}
Jinho Bok and Jason~M. Altschuler.
\newblock Accelerating proximal gradient descent via silver stepsizes.
\newblock {\em arXiv preprint arXiv:2412.05497, to appear in Conference on Learning Theory}, 2025.

\bibitem{pep-quadratic}
Nizar Bousselmi, Julien~M. Hendrickx, and Fran\c{c}ois Glineur.
\newblock Interpolation conditions for linear operators and applications to performance estimation problems.
\newblock {\em SIAM J. Optim.}, 34(3):3033--3063, 2024.

\bibitem{boyd}
Stephen Boyd and Lieven Vandenberghe.
\newblock {\em Convex optimization}.
\newblock Cambridge University Press, 2004.

\bibitem{cdhs17}
Yair Carmon, John~C. Duchi, Oliver Hinder, and Aaron Sidford.
\newblock ``{C}onvex until proven guilty'': Dimension-free acceleration of gradient descent on non-convex functions.
\newblock In {\em International Conference on Machine Learning}, volume~70, pages 654--663. PMLR, 2017.

\bibitem{daccachemsthesis}
Antoine Daccache.
\newblock Performance estimation of the gradient method with fixed arbitrary step sizes.
\newblock Master's thesis, UCL - Ecole polytechnique de Louvain, 2019.

\bibitem{ista}
Ingrid Daubechies, Michel Defrise, and Christine~De Mol.
\newblock An iterative thresholding algorithm for linear inverse problems with a sparsity constraint.
\newblock {\em Comm. Pure Appl. Math.}, 57(11):1413--1457, 2004.

\bibitem{drori17}
Yoel Drori.
\newblock The exact information-based complexity of smooth convex minimization.
\newblock {\em J. Complexity}, 39:1--16, 2017.

\bibitem{dt14}
Yoel Drori and Marc Teboulle.
\newblock Performance of first-order methods for smooth convex minimization: a novel approach.
\newblock {\em Math. Program.}, 145(1–2):451–482, 2014.

\bibitem{kelley}
Yoel Drori and Marc Teboulle.
\newblock An optimal variant of {K}elley's cutting-plane method.
\newblock {\em Math. Program.}, 160(1-2):321--351, 2016.

\bibitem{accelsurvey}
Alexandre d’Aspremont, Damien Scieur, and Adrien Taylor.
\newblock Acceleration methods.
\newblock {\em Foundations and Trends® in Optimization}, 5(1-2):1--245, 2021.

\bibitem{eloimsthesis}
Diego Eloi.
\newblock Worst-case functions for the gradient method with fixed variable step sizes.
\newblock Master's thesis, UCL - Ecole polytechnique de Louvain, 2022.

\bibitem{garrigos2023handbook}
Guillaume Garrigos and Robert~M. Gower.
\newblock Handbook of convergence theorems for (stochastic) gradient methods.
\newblock {\em arXiv preprint arXiv:2301.11235}, 2023.

\bibitem{gl16}
Saeed Ghadimi and Guanghui Lan.
\newblock Accelerated gradient methods for nonconvex nonlinear and stochastic programming.
\newblock {\em Math. Program.}, 156(1-2):59--99, 2016.

\bibitem{pep-quadub}
Baptiste Goujaud, Adrien Taylor, and Aymeric Dieuleveut.
\newblock Optimal first-order methods for convex functions with a quadratic upper bound.
\newblock {\em arXiv preprint arXiv:2205.15033}, 2022.

\bibitem{gdlongsteps}
Benjamin Grimmer.
\newblock Provably faster gradient descent via long steps.
\newblock {\em SIAM J. Optim.}, 34(3):2588--2608, 2024.

\bibitem{gswcomposing}
Benjamin Grimmer, Kevin Shu, and Alex~L. Wang.
\newblock Composing optimized stepsize schedules for gradient descent.
\newblock {\em arXiv preprint arXiv:2410.16249}, 2024.

\bibitem{gsw24}
Benjamin Grimmer, Kevin Shu, and Alex~L. Wang.
\newblock Accelerated objective gap and gradient norm convergence for gradient descent via long steps.
\newblock {\em INFORMS J. Optim.}, 7(2):156--169, 2025.

\bibitem{gu2023tight}
Guoyong Gu and Junfeng Yang.
\newblock Tight convergence rate in subgradient norm of the proximal point algorithm.
\newblock {\em arXiv preprint arXiv:2301.03175}, 2023.

\bibitem{optista}
Uijeong Jang, Shuvomoy~Das Gupta, and Ernest~K. Ryu.
\newblock Computer-assisted design of accelerated composite optimization methods: Opt{ISTA}.
\newblock {\em arXiv preprint arXiv:2305.15704}, 2024.

\bibitem{karimi}
Hamed Karimi, Julie Nutini, and Mark Schmidt.
\newblock Linear convergence of gradient and proximal-gradient methods under the {P}olyak-{{\L}}ojasiewicz condition.
\newblock In {\em Machine Learning and Knowledge Discovery in Databases}, pages 795--811. Springer, 2016.

\bibitem{km19}
Koulik Khamaru and Martin~J. Wainwright.
\newblock Convergence guarantees for a class of non-convex and non-smooth optimization problems.
\newblock {\em J. Mach. Learn. Res.}, 20(154):1--52, 2019.

\bibitem{pep-monotone}
Donghwan Kim.
\newblock Accelerated proximal point method for maximally monotone operators.
\newblock {\em Math. Program.}, 190(1-2):57--87, 2021.

\bibitem{ogm}
Donghwan Kim and Jeffrey~A. Fessler.
\newblock Optimized first-order methods for smooth convex minimization.
\newblock {\em Math. Program.}, 159(1-2):81--107, 2016.

\bibitem{fistaanotherlook}
Donghwan Kim and Jeffrey~A. Fessler.
\newblock Another look at the fast iterative shrinkage/thresholding algorithm ({FISTA}).
\newblock {\em SIAM J. Optim.}, 28(1):223--250, 2018.

\bibitem{ogm-g}
Donghwan Kim and Jeffrey~A. Fessler.
\newblock Optimizing the efficiency of first-order methods for decreasing the gradient of smooth convex functions.
\newblock {\em J. Optim. Theory Appl.}, 188(1):192--219, 2021.

\bibitem{hduality}
Jaeyeon Kim, Asuman Ozdaglar, Chanwoo Park, and Ernest~K. Ryu.
\newblock Time-reversed dissipation induces duality between minimizing gradient norm and function value.
\newblock In {\em Advances in Neural Information Processing Systems}, volume~36, pages 23389--23440, 2023.

\bibitem{mirrorduality}
Jaeyeon Kim, Chanwoo Park, Asuman Ozdaglar, Jelena Diakonikolas, and Ernest~K. Ryu.
\newblock Mirror duality in convex optimization.
\newblock {\em arXiv preprint arXiv:2311.17296}, 2023.

\bibitem{kim2025gdexact}
Jungbin Kim.
\newblock A proof of the exact convergence rate of gradient descent.
\newblock {\em arXiv preprint arXiv:2412.04427}, 2025.

\bibitem{fista-g}
Jongmin Lee, Chanwoo Park, and Ernest Ryu.
\newblock A geometric structure of acceleration and its role in making gradients small fast.
\newblock In {\em Advances in Neural Information Processing Systems}, volume~34, pages 11999--12012, 2021.

\bibitem{agd}
Yu.~E. Nesterov.
\newblock A method for solving the convex programming problem with convergence rate {$O(1/k\sp{2})$}.
\newblock {\em Dokl. Akad. Nauk SSSR}, 269(3):543--547, 1983.

\bibitem{nesterovbook}
Yurii Nesterov.
\newblock {\em Lectures on convex optimization}, volume 137 of {\em Springer Optimization and Its Applications}.
\newblock Springer, 2018.

\bibitem{factorsqrt2}
Chanwoo Park, Jisun Park, and Ernest~K. Ryu.
\newblock Factor-{$\sqrt{2}$} acceleration of accelerated gradient methods.
\newblock {\em Appl. Math. Optim.}, 88(77):1--38, 2023.

\bibitem{pep-fpi}
Jisun Park and Ernest~K. Ryu.
\newblock Exact optimal accelerated complexity for fixed-point iterations.
\newblock In {\em International Conference on Machine Learning}, volume 162, pages 17420--17457. PMLR, 2022.

\bibitem{rotaru2024gdexact}
Teodor Rotaru, Fran{\c{c}}ois Glineur, and Panagiotis Patrinos.
\newblock Exact worst-case convergence rates of gradient descent: a complete analysis for all constant stepsizes over nonconvex and convex functions.
\newblock {\em arXiv preprint arXiv:2406.17506}, 2024.

\bibitem{pep-operator}
Ernest~K. Ryu, Adrien~B. Taylor, Carolina Bergeling, and Pontus Giselsson.
\newblock Operator splitting performance estimation: tight contraction factors and optimal parameter selection.
\newblock {\em SIAM J. Optim.}, 30(3):2251--2271, 2020.

\bibitem{agd-ode}
Weijie Su, Stephen Boyd, and Emmanuel~J. Cand\`es.
\newblock A differential equation for modeling {N}esterov's accelerated gradient method: theory and insights.
\newblock {\em J. Mach. Learn. Res.}, 17(153):1--43, 2016.

\bibitem{taylorhabilitation}
Adrien Taylor.
\newblock {\em {Towards principled and systematic approaches to the analysis and design of optimization algorithms}}.
\newblock Habilitation {\`a} diriger des recherches, {Universit{\'e} Paris Sciences \& Lettres}, 2024.

\bibitem{item}
Adrien Taylor and Yoel Drori.
\newblock An optimal gradient method for smooth strongly convex minimization.
\newblock {\em Math. Program.}, 199(1-2):557--594, 2023.

\bibitem{taylorcomposite}
Adrien~B. Taylor, Julien~M. Hendrickx, and Fran\c{c}ois Glineur.
\newblock Exact worst-case performance of first-order methods for composite convex optimization.
\newblock {\em SIAM J. Optim.}, 27(3):1283--1313, 2017.

\bibitem{thg17}
Adrien~B. Taylor, Julien~M. Hendrickx, and Fran\c{c}ois Glineur.
\newblock Smooth strongly convex interpolation and exact worst-case performance of first-order methods.
\newblock {\em Math. Program.}, 161(1-2):307--345, 2017.

\bibitem{pep-proximal}
Adrien~B. Taylor, Julien~M. Hendrickx, and Fran\c{c}ois Glineur.
\newblock Exact worst-case convergence rates of the proximal gradient method for composite convex minimization.
\newblock {\em J. Optim. Theory Appl.}, 178(2):455--476, 2018.

\bibitem{tv23}
Marc Teboulle and Yakov Vaisbourd.
\newblock An elementary approach to tight worst case complexity analysis of gradient based methods.
\newblock {\em Math. Program.}, 201(1-2):63--96, 2023.

\bibitem{rppa}
Bofan Wang, Shiqian Ma, Junfeng Yang, and Danqing Zhou.
\newblock Relaxed proximal point algorithm: Tight complexity bounds and acceleration without momentum.
\newblock {\em arXiv preprint arXiv:2410.08890}, 2024.

\bibitem{accel-lyapunov}
Ashia~C. Wilson, Ben Recht, and Michael~I. Jordan.
\newblock A {L}yapunov analysis of accelerated methods in optimization.
\newblock {\em J. Mach. Learn. Res.}, 22(113):1--34, 2021.

\bibitem{pep-minimax}
Taeho Yoon and Ernest~K. Ryu.
\newblock Accelerated algorithms for smooth convex-concave minimax problems with ${O}(1/k^2)$ rate on squared gradient norm.
\newblock In {\em International Conference on Machine Learning}, volume 139, pages 12098--12109. PMLR, 2021.

\bibitem{flocktogether}
TaeHo Yoon and Ernest~K. Ryu.
\newblock Accelerated minimax algorithms flock together.
\newblock {\em SIAM J. Optim.}, 35(1):180--209, 2025.

\bibitem{young}
David Young.
\newblock On {R}ichardson's method for solving linear systems with positive definite matrices.
\newblock {\em J. Math. Physics}, 32(1-4):243--255, 1953.

\bibitem{pep-admm}
Moslem Zamani, Hadi Abbaszadehpeivasti, and Etienne de~Klerk.
\newblock The exact worst-case convergence rate of the alternating direction method of multipliers.
\newblock {\em Math. Program.}, 208(1-2):243--276, 2024.

\bibitem{subgdrate}
Moslem Zamani and François Glineur.
\newblock Exact convergence rate of the last iterate in subgradient methods.
\newblock {\em arXiv preprint arXiv:2307.11134}, 2023.

\bibitem{zhangcomposing}
Zehao Zhang and Rujun Jiang.
\newblock Accelerated gradient descent by concatenation of stepsize schedules.
\newblock {\em arXiv preprint arXiv:2410.12395}, 2024.

\bibitem{anytimeaccel}
Zihan Zhang, Jason~D. Lee, Simon~S. Du, and Yuxin Chen.
\newblock Anytime acceleration of gradient descent.
\newblock {\em arXiv preprint arXiv:2411.17668}, 2024.

\end{thebibliography}

\newpage
\appendix

\section{Proof of Theorem \ref{thm:main-func}}\label{appendix:proof-main-func}

\subsection{Implications of \eqref{eq:vanilla-func}}

In order to establish \eqref{eq:prox-func}, we first parse out \eqref{eq:vanilla-func} by comparing the coefficients on its each side.

\paragraph*{Linear form.} By comparing the coefficients of the linear form in $\{f_i\}$, we obtain
\begin{equation}\label{eq:vanilla-func-linear}
	\begin{split}
		\lambda_{i, \bullet} - \lambda_{\bullet, i} &= 0, \quad 0 \leq i \leq n-1 ,\\
		\lambda_{n, \bullet} - \lambda_{\bullet, n} &= -R_n, \\
		\lambda_{*, \bullet} &= R_n.
	\end{split}
\end{equation}

\paragraph*{Quadratic form.} Here, we compare the coefficients of the quadratic form in $x_0 - x_*, \{g_i\}$. The coefficient of $\angs{g_i}{g_j}$ for $0 \leq i < j \leq n$ on the left hand side of \eqref{eq:vanilla-func} is
\[\sum_{k=i+1}^n \wt{\alpha}_{k, i}\lambda_{k, j} -\wt{\alpha}_{j, i}\lambda_{\bullet, j}  + \sum_{k=j+1}^n \wt{\alpha}_{k, j}\lambda_{k, i} + \lambda_{i, j} + \lambda_{j, i} + \gamma_i \gamma_j = 0\,,
\]
and for $0 \leq i = j \leq n$ the coefficient is $\sum_{k=i+1}^n \wt{\alpha}_{k, i}\lambda_{k, i} - \frac{1}{2}(\lambda_{\bullet, i} + \lambda_{i, \bullet}) + \frac{1}{2}\gamma_i^2 = 0$.
This is equivalent to 
\begin{align}
	\lhat + \widetilde{H}\ltilde + (\widetilde{H}\ltilde)^T &= -\gtilde \gtilde^T, \label{eq:vanilla-func-quad-mat} \\
	\begin{bmatrix}
		\lambda_{0, n} + \lambda_{n, 0} \\
		\vdots \\
		\lambda_{n-1, n} + \lambda_{n, n-1}
	\end{bmatrix} + \widetilde{H}\begin{bmatrix}
		\lambda_{1, n} \\
		\vdots \\
		\lambda_{n-1, n} \\
		-\lambda_{\bullet, n}
	\end{bmatrix} &= -\gamma_n \gtilde, \label{eq:vanilla-func-quad-vec} \\
	\lambda_{\bullet, n} + \lambda_{n, \bullet} &= \gamma_n^2. \label{eq:vanilla-func-quad-scalar}
\end{align}
Also, by comparing the coefficient of $\angs{x_0 - x_*}{g_i}$ for $0 \leq i \leq n$, we obtain
\begin{equation}\label{eq:vanilla-func-quad-star}
	\lambda_{*, i} = \gamma_i, \quad 0 \leq i \leq n\,.
\end{equation}

\subsection{Verification of \eqref{eq:prox-func}}\label{subsec:verif-main-func}

Assuming \eqref{eq:vanilla-func} is true, we show that \eqref{eq:prox-func} is true by matching coefficients. It is clear that both sides of \eqref{eq:prox-func} are linear in $\{f_i\}, \{h_i\}$ and quadratic in $x_0 - x_*, \{g_i\}, \{s_i\}$.

\paragraph*{Linear form.} The coefficients of $\{f_i\}$ are from \eqref{eq:vanilla-func}. For the coefficients of $\{h_i\}$, it suffices to show that
\[\mtilde\bm{1}_n = -R_ne_n\,.
\]
From $\mtilde = \ltilde + \widetilde{H}^{-1}(\lhat - \gtilde \stilde^T)$ and $\ltilde \bm{1}_n = \begin{bmatrix}
	-\lambda_{1, n} \\ \vdots \\ -\lambda_{n-1, n} \\
	\lambda_{\bullet, n} - R_n
\end{bmatrix}$ from \eqref{eq:vanilla-func-linear}, it suffices to show that
\[\widetilde{H}^{-1}(\lhat - \gtilde \stilde^T) \bm {1}_n = \begin{bmatrix}
	\lambda_{1, n} \\ \vdots \\ \lambda_{n-1, n} \\
	-\lambda_{\bullet, n}
\end{bmatrix} \Leftrightarrow (\lhat - \gtilde \stilde^T) \bm {1}_n = \widetilde{H}\begin{bmatrix}
	\lambda_{1, n} \\ \vdots \\ \lambda_{n-1, n} \\
	-\lambda_{\bullet, n}
\end{bmatrix}\,.
\]
The right hand side is equal to the left hand side because
\begin{align*}
	- \begin{bmatrix}
		\lambda_{0, n} + \lambda_{n, 0} \\ \vdots \\ \lambda_{n-1, n} + \lambda_{n, n-1}
	\end{bmatrix}- \gamma_n\gtilde &= -\begin{bmatrix}
		\lambda_{0, n} + \lambda_{n, 0} \\ \vdots \\ \lambda_{n-1, n} + \lambda_{n, n-1}
	\end{bmatrix} - \gtilde -(\sum \stilde)\gtilde \\
	&= -\begin{bmatrix}
		\lambda_{0, n} + \lambda_{n, 0} + \lambda_{*, 0} \\ \vdots \\ \lambda_{n-1, n} + \lambda_{n, n-1} + \lambda_{*, n-1} 
	\end{bmatrix} -(\sum \stilde)\gtilde \\
	&= (\lhat - \gtilde \stilde^T)\bm{1}_n\,,
\end{align*}
where the first equality is from 
\begin{equation}\label{eq:sumsigma}
	\sum \sigma = \sum \stilde + \sigma_*  = \frac{\lambda_{\bullet, n} + \lambda_{n, \bullet}}{\gamma_n} = \frac{\gamma_n^2}{\gamma_n} = \gamma_n\,,
\end{equation}
by \eqref{eq:vanilla-func-quad-scalar} and the second equality is from \eqref{eq:vanilla-func-quad-star}.

\paragraph*{Quadratic form.} Now, we compare the coefficients of the quadratic forms (in $x_0 - x_*, \{g_i\}, \{s_i\}$). The analysis is more technically involved than that for the linear form, because $Q_{ij}$ (in unconstrained setting) is not equal to $Q_{ij}^f$ (in composite setting). Thus, we define and analyze an intermediate quantity that connects these quantities. After this, the comparison is conceptually straightforward yet algebraically tedious.

\paragraph*{Setup.} 
It is convenient to first characterize $\sum_{i, j}\lambda_{i, j}Q^f_{ij}$, from \eqref{eq:vanilla-func} (which is written in $Q_{ij}$, for unconstrained setting). For this, define the \emph{pseudo-co-coercivities} $\widetilde{Q}^f_{ij}$ obtained by replacing $(x_0-x_*, g_0,  g_1, \dots, g_n)$ from $Q_{ij}$ with $(x_0 - x_*, g_0 + s_1, g_1+s_2, \dots, g_{n-1}+s_n, g_n+ s_{n+1})$, with $s_{n+1} := 0$. Then \eqref{eq:vanilla-func} implies
\begin{equation}\label{eq:pseudococo}
	\sum_{i, j} \lambda_{i, j}\widetilde{Q}^f_{ij} + \frac{1}{2}\norm{x_0 - x_* - \begin{bmatrix}
			g_0 + s_1 | g_1 + s_2 | \dots | g_n + s_{n+1}
		\end{bmatrix} \gamma}^2 = R_n(f_* - f_n) + \frac{1}{2}\norm{x_0 - x_*}^2\,.
\end{equation}
The difference between the actual co-coercivities $Q_{ij}^f$ and pseudo-co-coercivities $\widetilde{Q}_{ij}^f$ satisfies
\begin{equation}\label{eq:pseudococodiff}
	\begin{split}
		Q^f_{ij} - \widetilde{Q}^f_{ij} &= \angs{s_{j+1}}{x_i - x_j} - \frac{1}{2}\norm{g_i - g_j}^2 + \frac{1}{2}\norm{g_i + s_{i+1} - g_j - s_{j+1}}^2 \\
		&= \angs{s_{j+1}}{x_i - x_j} +  \frac{1}{2}\norm{s_{i+1} - s_{j+1}}^2 + \angs{g_i - g_j}{s_{i+1}-s_{j+1}}\,,
	\end{split}
\end{equation}
with $s_{*+1} := s_*$. Therefore, by adding $\sum_{i, j}\lambda_{i, j}(Q_{ij}^f - \wt{Q}_{ij}^f)$ on both sides of \eqref{eq:pseudococo} using \eqref{eq:pseudococodiff},
\begin{align*}
	&\sum_{i, j} \lambda_{i, j}Q^f_{ij} + \frac{1}{2}\norm{x_0 - x_* - \begin{bmatrix}
			g_0 + s_1 | g_1 + s_2 | \dots | g_n + s_{n+1}
		\end{bmatrix} \gamma}^2 \\
	&= R_n(f_* - f_n) + \frac{1}{2}\norm{x_0 - x_*}^2 + \sum_{i \neq *, j} \lambda_{i, j} \left(\angs{s_{j+1}}{x_i - x_j} + \frac{1}{2}\norm{s_{i+1} - s_{j+1}}^2 + \angs{g_i - g_j}{s_{i+1} - s_{j+1}}\right) \\
	&\quad+ \sum_j \lambda_{*j} \left(\angs{s_{j+1}}{x_* - x_j} + \frac{1}{2}\norm{s_{j+1}}^2 - \frac{1}{2}\norm{s_*}^2 + \angs{g_j}{s_{j+1} - s_*}\right)\,,
\end{align*}
where we split the sum $\sum_{i,j} \lambda_{i, j}(Q_{ij}-\widetilde{Q}_{ij}^f)$ based on $i \neq *$ and $i = *$. Thus, \eqref{eq:prox-func} is equivalent to
\begin{equation}\label{eq:frompseudococo-func}
	\begin{split}
		\sum_{i, j} \mu_{i, j}Q^h_{ij} 
		&= R_n(h_* - h_n) + \frac{1}{2}\xi \norm{x_0 - x_*}^2 \\
		&\quad + \angs{x_0 - x_* - \begin{bmatrix}
				g_0 + s_1 | g_1 + s_2 | \dots | g_n + s_{n+1}
			\end{bmatrix} \gamma}{\begin{bmatrix}
				s_1 | s_2 | \dots | s_n | s_*
			\end{bmatrix} \sigma} - \frac{1}{2}\norm{\begin{bmatrix}
				s_1 | s_2 | \dots | s_n | s_*
			\end{bmatrix} \sigma}^2 \\
		&\quad- \sum_{i \neq *, j} \lambda_{i, j} \left(\angs{s_{j+1}}{x_i - x_j} + \frac{1}{2}\norm{s_{i+1} - s_{j+1}}^2 + \angs{g_i - g_j}{s_{i+1} - s_{j+1}}\right) \\
		&\quad- \sum_j \lambda_{*j} \left(\angs{s_{j+1}}{x_* - x_j} + \frac{1}{2}\norm{s_{j+1}}^2 - \frac{1}{2}\norm{s_*}^2 + \angs{g_j}{s_{j+1} - s_*}\right) - \frac{1}{2}\Tr(VSV^T)\,.
	\end{split}
\end{equation}

\paragraph*{Category of vectors.} For convenience, we categorize the vectors (that constitute the quadratic form) into different types as follows:
\begin{equation}\label{eq:vectortype}
	\begin{split}
		g &: g_0, \dots, g_n \text{ (\underline{g}radients)}, \\
		s &: s_1, \dots, s_n \text{ (\underline{s}ubgradients)},\\
		i &: x_0 - x_* \text{ (\underline{i}nitial distance)}, \\
		o &: s_* = -g_* \text{ (\underline{o}ptimum)}.
	\end{split}
\end{equation}
In our subsequent analysis, we divide into different cases based on the combination of these types.

\paragraph*{Coefficients of $(g, g)$.} These are directly from \eqref{eq:vanilla-func}.

\paragraph*{Coefficients of $(g, s)$.}

The coefficient of $\angs{g_i}{s_j}$ on the left hand side of \eqref{eq:frompseudococo-func} for $0 \leq i \leq n-1, 1 \leq j \leq n$ is
\[\sum_{k=i+1}^n \wt{\alpha}_{k, i} \mu_{k, j}  -\wt{\alpha}_{j, i} \mu_{\bullet, j}  \bm{1}\{i \leq j-1\}\,,
\]
whereas the corresponding coefficient on the right hand side of \eqref{eq:frompseudococo-func} is
\[-\gamma_i \sigma_j + \sum_{k=i+1}^n \wt{\alpha}_{k, i}\lambda_{k, j-1} - \wt{\alpha}_{j-1, i}\lambda_{\bullet, j-1}\bm{1}\{i \leq j-2\} + \lambda_{i, j-1} + \lambda_{j-1, i} - (\lambda_{\bullet, j-1} + \lambda_{j-1, \bullet})\bm{1}\{i = j-1\}\,.
\]
Thus, the coefficients being matched is equivalent to (with a correspondence of entry $(i, j) \Leftrightarrow \angs{g_{i-1}}{s_j}$)
\[\wt{H}\mtilde = -\gtilde \stilde^T + \wt{H}\ltilde + \lhat\,,
\]
which is true from \eqref{eq:mutilde-func}. Note that the alternative form in \eqref{eq:mutilde-func} is obtained from \eqref{eq:vanilla-func-quad-mat}.

The coefficient of $\angs{g_n}{s_j}$ for $1 \leq j \leq n$ on the left hand side of \eqref{eq:frompseudococo-func} is 0. For the right hand side it is $-\gamma_n \sigma_j + \lambda_{j-1, n} + \lambda_{n, j-1} = 0$ from \eqref{eq:sigma}.

\paragraph*{Coefficients of $(s, s)$.} The coefficient of $\angs{s_i}{s_j}$ for $1 \leq i \leq j \leq n$ on the left hand side of \eqref{eq:frompseudococo-func} is
\[\begin{cases}
	\sum_{k=i}^n \wt{\alpha}_{k, i-1}\mu_{k, j} - \wt{\alpha}_{j, i-1}\mu_{\bullet, j} + \sum_{k=j}^n \wt{\alpha}_{k, j-1}\mu_{k, i} & j > i \\
	\sum_{k=i}^n \wt{\alpha}_{k, i-1}\mu_{k, i} - \wt{\alpha}_{i, i-1}\mu_{\bullet, i} & j = i\,,
\end{cases}
\]
and the corresponding coefficient on the right hand side is
\begin{align*}
	&-\gamma_{i-1}\sigma_j - \gamma_{j-1}\sigma_i - \sigma_i \sigma_j + \sum_{k=i}^n \wt{\alpha}_{k, i-1} \lambda_{k, j-1} - \wt{\alpha}_{j-1, i-1}\lambda_{\bullet, j-1} + \sum_{k=j}^n \wt{\alpha}_{k, j-1}\lambda_{k, i-1} + \lambda_{i-1, j-1} + \lambda_{j-1, i-1} \\
	&\quad+ \lambda_{i-1, j-1} + \lambda_{j-1, i-1} + \sigma_i \sigma_j \\
	&= -\gamma_{i-1}\sigma_j - \gamma_{j-1}\sigma_i + \sum_{k=i}^n \wt{\alpha}_{k, i-1} \lambda_{k, j-1}  - \wt{\alpha}_{j-1, i-1}\lambda_{\bullet, j-1} + \sum_{k=j}^n \wt{\alpha}_{k, j-1}\lambda_{k, i-1} + 2(\lambda_{i-1, j-1} + \lambda_{j-1, i-1} )
\end{align*}
if $i < j$ and 
\begin{align*}
	&-\gamma_{i-1}\sigma_i - \frac{1}{2}\sigma_i^2 + \sum_{k=i}^n \wt{\alpha}_{k, i-1}\lambda_{k, i-1} - \mu_{\bullet, i}\wt{\alpha}_{i, i-1} - \frac{1}{2}(\lambda_{i-1, \bullet} + \lambda_{\bullet, i-1}) -\frac{1}{2}(\lambda_{i-1, \bullet} + \lambda_{\bullet, i-1}) + \frac{1}{2}\sigma_i^2 \\
	&= -\gamma_{i-1}\sigma_i + \sum_{k=i}^n \wt{\alpha}_{k, i-1}\lambda_{k, i-1} - \mu_{\bullet, i}\wt{\alpha}_{i, i-1} - (\lambda_{i-1, \bullet} + \lambda_{\bullet, i-1})
\end{align*}
if $i = j$. Thus the coefficients matching is equivalent to
\[\wt{H}\mtilde + (\wt{H}\mtilde)^T = -(\gtilde \stilde^T + \stilde \gtilde^T) + \wt{H}\ltilde + (\wt{H}\ltilde)^T + 2\lhat\,,
\]
which holds from \eqref{eq:mutilde-func} as $\wt{H}\mtilde = \wt{H}\ltilde + \lhat - \gtilde \stilde^T$.

\paragraph*{Coefficients of remaining combinations.} The rest of the combinations for inner products involves $x_0 - x_*$ or $s_* = -g_*$, which only appears in a few entries of \eqref{eq:frompseudococo-func}.

\begin{itemize}
	\item $(g, o), (s, o), (o, o)$: In \eqref{eq:frompseudococo-func}, the left hand side has coefficients $0$, and the right hand side has coefficients (respectively for $\angs{g_i}{s_*}, \angs{s_j}{s_*}, \norm{s_*}^2$) $-\gamma_i \sigma_* + \lambda_{*, i} = 0, -\gamma_{j-1}\sigma_* - \sigma_j\sigma_* + \gamma_{j-1}\sigma_* + \sigma_j \sigma_* = 0, -\frac{1}{2}\sigma_*^2 + \frac{1}{2}\sigma_*^2 = 0$.
	\item $(i, i), (g, i)$: Straightforward from \eqref{eq:vanilla-func}.
	\item $(s, i), (o, i)$: In \eqref{eq:frompseudococo-func}, for $\angs{s_j}{x_0 - x_*}$ the left hand side has coefficients $\mu_{*, j}$ and the right hand side has coefficients $\sigma_j + \lambda_{*, j-1} - v_j = \mu_{*, j}$.
\end{itemize}

\subsection{Other properties}
In this section, we conclude the proof of Theorem \ref{thm:main-func} by providing the remaining details.
\paragraph*{$\boldsymbol{\sum v = 0}$.} From the definition of $v$ in \eqref{eq:slack-func},
\begin{align*}
	\sum v &= \sum_{k=1}^n(\sigma_k + \lambda_{*, k-1} - \mu_{*, k}) + \sigma_* \\
	&= \sum\sigma  + (\lambda_{*, \bullet} - \lambda_{*, n}) - \mu_{*, \bullet}\,,
\end{align*}
where $\sum \sigma = \lambda_{*, n} = \gamma_n$ from \eqref{eq:vanilla-func-quad-star} and \eqref{eq:sumsigma}. Thus $\sum v = R_n - \mu_{*, \bullet}$.
The term $\mu_{*, \bullet}$ can be calculated as follows:
\begin{align*}
	\mu_{*, \bullet} &= -\bm{1}_n^T \mtilde \bm{1}_n \\
	&= -\bm{1}_n^T \wt{H}^{-1}\left(-\begin{bmatrix}
		\lambda_{n, 0} + \lambda_{*, 0} + \lambda_{0, n} \\
		\vdots \\
		\lambda_{n, n-1} + \lambda_{*, n-1} + \lambda_{n-1, n}
	\end{bmatrix}  - \left(\sum \stilde\right)\gtilde\right) - \bm{1}_n^T \begin{bmatrix}
		\lambda_{1, \bullet} - \lambda_{1, n}- \lambda_{\bullet, 1} \\
		\vdots \\
		\lambda_{n-1, \bullet} - \lambda_{n-1, n} - \lambda_{\bullet, n-1} \\
		\lambda_{n, \bullet}
	\end{bmatrix} \\
	&= \bm{1}_n^T \wt{H}^{-1}(\gtilde + \stilde)\gamma_n + \lambda_{\bullet, n} - \lambda_{*, n} - \lambda_{n, \bullet} \\
	&= -\bm{1}_n^T \begin{bmatrix}
		\lambda_{1, n} \\
		\vdots \\
		-\lambda_{\bullet, n}
	\end{bmatrix} - \lambda_{*, n} + R_n \\
	&= \lambda_{*, n} - \lambda_{*, n} + R_n = R_n\,,
\end{align*}
where the second equality is from the definition of $\mtilde$ \eqref{eq:mutilde-func}, the third equality is from \eqref{eq:vanilla-func-linear} and \eqref{eq:sumsigma}, and the fourth equality is from \eqref{eq:vanilla-func-quad-vec}. Thus, $\sum v = 0$.

\paragraph*{$L$ is Laplacian.} Recall from \eqref{eq:slack-func} that $L = -\begin{bmatrix}
	\lhat & \gtilde \\
	\gtilde^T & -\lambda_{*, \bullet}
\end{bmatrix} - \sigma \sigma^T$. Here, the nondiagonal entries are clearly nonpositive. The $j$th row sum for $1 \leq j \leq n$ is
\[\lambda_{j-1, n} + \lambda_{n, j-1} + \lambda_{*, j-1} - \gamma_{j-1} - \left(\sum \sigma\right)\sigma_j = \sigma_j\left(\gamma_n - \sum \sigma\right) = 0\,,
\]
where the first equality is from the definition of $\sigma$ \eqref{eq:sigma}, $\lambda_{*, j-1} = \gamma_{j-1}$ from \eqref{eq:vanilla-func-quad-star} and the last equality is from \eqref{eq:sumsigma}. The $(n+1)$th row sum is $-\sum \gtilde + \lambda_{*, \bullet} - \left(\sum \sigma\right)\sigma_* = \lambda_{*, \bullet} - \sum \gamma = 0$ from \eqref{eq:vanilla-func-quad-star}.

\section{Proof of Theorem \ref{thm:main-grad}}\label{appendix:proof-main-grad}
As in Appendix \ref{appendix:proof-main-func}, we show that \eqref{eq:prox-grad} is true by matching coefficients (whose both sides are linear in $\{f_i\}, \{h_i\}$ and quadratic in $x_0 - x_*, \{g_i\}, \{s_i\}$), assuming \eqref{eq:vanilla-grad} is true.

\subsection{Implications of \eqref{eq:vanilla-grad}}
We record properties of $\lambda'$ by comparing the coefficients in \eqref{eq:vanilla-grad}.

\paragraph*{Linear form.} The coefficients of the linear form in $\{f_i\}$ yield
\begin{equation}\label{eq:vanilla-grad-linear}
	\begin{split}
		\lambda'_{0, \bullet} - \lambda'_{\bullet, 0} &= 1,\\
		\lambda'_{i, \bullet} - \lambda'_{\bullet, i} &= 0\,,\quad i \in \{1, \dots, n-1, *\}, \\
		\lambda'_{n, \bullet} - \lambda'_{\bullet, n} &= -1.
	\end{split}
\end{equation}

\paragraph*{Quadratic form.} The coefficients of the quadratic form in $x_0 - x_*, \{g_i\}$ yield
\begin{align}
	\lhat' + \widetilde{H}\ltilde' + (\widetilde{H}\ltilde')^T &= \bm{0}_{n \times n} , \label{eq:vanilla-grad-quad-mat} \\
	\begin{bmatrix}
		\lambda'_{0, n} + \lambda'_{n, 0} \\
		\vdots \\
		\lambda'_{n-1, n} + \lambda'_{n, n-1}
	\end{bmatrix} + \widetilde{H}\begin{bmatrix}
		\lambda'_{1, n} \\
		\vdots \\
		-\lambda'_{\bullet, n}
	\end{bmatrix} &= \bm{0}_n , \label{eq:vanilla-grad-quad-vec} \\
	\lambda'_{\bullet, n} + \lambda'_{n, \bullet} &= R'_n . \label{eq:vanilla-grad-quad-scalar}
\end{align}

\subsection{Verification of \eqref{eq:prox-grad} and positive semidefiniteness}

Now, we compare the coefficients in \eqref{eq:prox-grad}. The derivation for each case is very similar to that in Appendix \ref{subsec:verif-main-func} and thus we focus on the equivalent forms presented in matrix.

\paragraph*{Linear form.} The coefficients for $\{f_i\}$ are from \eqref{eq:vanilla-grad}. For the coefficients for $\{h_i\}$, it suffices to show that
\[\mtilde'\bm{1}_n = -e_n\,.
\]
From $\mtilde' = \ltilde' + \widetilde{H}^{-1}\lhat'$ and $\ltilde' \bm{1} = \begin{bmatrix}
	-\lambda'_{1, n} \\ \vdots \\ -\lambda'_{n-1, n} \\
	\lambda'_{\bullet, n} - 1
\end{bmatrix}$ (from \eqref{eq:vanilla-grad-linear}), it suffices to show that
\[\widetilde{H}^{-1}\lhat' \bm {1}_n = \begin{bmatrix}
	\lambda'_{1, n} \\ \vdots \\ \lambda'_{n-1, n} \\
	-\lambda'_{\bullet, n}
\end{bmatrix} \Leftrightarrow \lhat' \bm{1}_n = \widetilde{H}\begin{bmatrix}
	\lambda'_{1, n} \\ \vdots \\ \lambda'_{n-1, n} \\
	-\lambda'_{\bullet, n}
\end{bmatrix}\,,
\]
which holds from \eqref{eq:vanilla-grad-quad-vec}.

\paragraph*{Quadratic form.} By using the pseudo-co-coercivities defined in Appendix \ref{subsec:verif-main-func}, \eqref{eq:prox-grad} given \eqref{eq:vanilla-grad} is equivalent to
\begin{equation}\label{eq:frompseudococo-grad}
	\begin{split}
		\sum_{i, j} \mu'_{i, j}Q^h_{ij} 
		&= \frac{R'_n}{2}\norm{g_n}^2 - \frac{R'_n}{2}(1-\xi')\norm{g_n + s_n}^2 + h_0 - h_* \\
		&\quad- \sum_{i, j} \lambda'_{i, j} \left(\angs{s_{j+1}}{x_i - x_j} + \frac{1}{2}\norm{s_{i+1} - s_{j+1}}^2 + \angs{g_i - g_j}{s_{i+1} - s_{j+1}}\right) \\
		&\quad- \frac{1}{2}\Tr(V'S'(V')^T)\,.
	\end{split}
\end{equation}

Note that the term $- \frac{R'_n}{2}(1-\xi')\norm{g_n + s_n}^2$ in \eqref{eq:frompseudococo-grad} is cancelled by the summand $-R'_n(1-\xi')(e_1+e_{n+1})(e_1 + e_{n+1})^T$ in $S'$ from \eqref{eq:slack-grad}. Also, for the types of vectors defined in \eqref{eq:vectortype}, we only need to consider type $g$ and $s$ as we do not use the index $*$.
\paragraph*{Coefficients of $(g, g)$.} These are directly from \eqref{eq:vanilla-grad}.

\paragraph*{Coefficients of $(g, s)$.} The coefficients of $\angs{g_i}{s_j}$ for $0 \leq i \leq n-1, 1 \leq j \leq n$ being matched is equivalent to
\[\wt{H}\mtilde' = \wt{H}\ltilde' + \lhat'\,,
\]
which holds from \eqref{eq:vanilla-grad-quad-vec} and $\wt{H}\mtilde' = -(\wt{H}\ltilde)^T$ from \eqref{eq:mutilde-grad}.

The coefficient of $\angs{g_n}{s_j}$ for $1 \leq j \leq n$ on the left hand side of \eqref{eq:frompseudococo-grad} is 0; for the right hand side it is $\lambda_{j-1, n} + \lambda_{n, j-1} - v'_j = 0$ from \eqref{eq:slack-grad}.

\paragraph*{Coefficients of $(s, s)$.} By considering the coefficients of $\angs{s_i}{s_j}$ for $1 \leq i \leq j \leq n$, it suffices to show
\[\wt{H}\mtilde' + (\wt{H}\mtilde')^T = \wt{H}\ltilde' + (\wt{H}\ltilde')^T + 2\lhat'\,,
\]
which follows from the corresponding result for the coefficients of $(g, s)$.

\paragraph*{Structure of $S'$ and diagonal dominance.} Recall the definition of $S'$ from \eqref{eq:slack-grad}:
\[S' = \begin{bmatrix}
	R_n' & (v')^T \\
	v' & -\lhat'
\end{bmatrix} - R'_n(1-\xi')(e_1 + e_{n+1})(e_1 + e_{n+1})^T\,,
\]
where $v' = [v'_1, \dots, v'_n]$ with $ v'_i = \lambda_{i-1, n}' + \lambda_{n, i-1}'$. 

The first summand $\begin{bmatrix}
	R_n' & (v')^T \\
	v' & -\lhat'
\end{bmatrix}$ is diagonally dominant: in the first row, $v'_i \geq 0$ with $R'_n = \lambda'_{\bullet, n} + \lambda'_{n, \bullet} = \sum v'$ from \eqref{eq:vanilla-grad-quad-scalar}; in the $i$th row for $2 \leq i \leq n+1$, the diagonal entry is $\lambda'_{\bullet, i-1} + \lambda'_{i-1, \bullet}$, and the absolute sum of the remaining entries is $v'_{i-1} + \sum_{k \notin \{i-1, n\}}(\lambda'_{k, i-1} + \lambda'_{i-1, k}) = \lambda'_{\bullet, i-1} + \lambda'_{i-1, \bullet}$.

Thus, if we choose $\xi' \in [0, 1)$ such that the $(1, n+1)$ entry of $S'$ is nonnegative, i.e., $\lambda'_{n-1, n} + \lambda'_{n, n-1} - R'_n(1-\xi') \geq 0$, $S'$ is diagonally dominant. In particular, we can choose $\xi' = 1 - \frac{\lambda'_{n-1, n} + \lambda'_{n, n-1}}{R'_n}$ (which is always in $[0, 1]$ from \eqref{eq:vanilla-grad-quad-scalar}).

\section{Deferred details for Section \ref{sec:app-grad}}\label{appendix:appgrad}

\subsection{Stepsize-accelerated proximal GD}\label{appendix:sss-g}

\begin{proof}[Proof of Theorem \ref{thm:sss-g}]
	As in the case of objective function minimization, we first show that the following key lemma holds.
	\begin{lemma}\label{lem:sss-gpartialsum}
		For $k \in \N$ and $n = 2^k-1$, let $\lambda' = \lambda'^{(k)}$. Then
		\begin{equation}\label{eq:lambdascaled-grad}
			\frac{\lambda'_{i, j-1}}{\gamma'_{j-1}} - \frac{\lambda'_{i, j}}{\gamma'_j} \begin{cases}
				\geq 0 & 0 \leq i \leq j-2 \\
				\leq 0 & i \geq j+1
			\end{cases}
		\end{equation}
		for all $1 \leq j \leq n$, where $[\gamma'_0, \dots, \gamma'_{n-1}] := w^{(k)}$ and $\gamma'_n := 1$.
	\end{lemma}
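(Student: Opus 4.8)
The plan is to prove Lemma~\ref{lem:sss-gpartialsum} by induction on $k$, in direct parallel with the proof of Lemma~\ref{lem:ssspartialsum}. The case $k=1$ is vacuous (with $n=1$ the only column is $j=1$, and no index $i$ satisfies $i\leq j-2$ or $i\geq j+1$), and $k=2$ is a finite check using the explicit $4\times 4$ matrix $\lambda'^{(2)}$ and $w^{(2)}=[3/2,\eta_1,\sqrt2]$ (where $\eta_1=\rho$ and $\tau_2=6+4\sqrt2$). For the inductive step, assume \eqref{eq:lambdascaled-grad} for $k$ and consider level $k+1$ with $n':=2n+1=2^{k+1}-1$. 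Recall the decomposition $\lambda'^{(k+1)}=\lambda'^{(k+1),\rec}+\lambda'^{(k+1),\sparse}+\lambda'^{(k+1),\lr}$: the top-left $(n+1)\times(n+1)$ block of $\lambda'^{(k+1),\rec}$ is $\lambda'^{(k)}$, its bottom-right block is $\tfrac{\tau_{k+1}}{\rho^{2k}}\blambda^{(k)}$, the sparse term is supported on $(n,2n+1)$ and $(2n+1,n)$, and the low-rank term is supported in columns $n+1,\dots,2n$ at rows $n$ and $2n+1$ with value $\tfrac{\tau_{k+1}}{2\rho^{2k}}\pi^{(k)}_{j-n}$. The stepsizes split accordingly: $\gamma'_0,\dots,\gamma'_{n-1}$ are the entries of $w^{(k)}$, $\gamma'_n=\eta_k$, $\gamma'_{n+1},\dots,\gamma'_{2n}$ are the entries of $\pi^{(k)}$, and $\gamma'_{2n+1}=1$. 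A useful preliminary remark: since the low-rank value $\tfrac{\tau_{k+1}}{2\rho^{2k}}\pi^{(k)}_{j-n}$ divided by $\gamma'_j=\pi^{(k)}_{j-n}$ is the constant $\tfrac{\tau_{k+1}}{2\rho^{2k}}$, the low-rank correction contributes $0$ to every column-difference strictly inside $n+1\leq j\leq 2n$ and contributes only the correctly-signed amount $\pm\tfrac{\tau_{k+1}}{2\rho^{2k}}$ at the boundaries $j=n+1$ and $j=2n+1$.

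I would then split into the three ranges of $j$ used in Lemma~\ref{lem:ssspartialsum}. (a) For $1\leq j\leq n$, columns $j-1,j$ agree with those of $\lambda'^{(k)}$ in rows $0\leq i\leq n$ and vanish in rows $n+1\leq i\leq 2n$, so the induction hypothesis applies; the only wrinkle is that for $j=n$ one divides $\lambda'^{(k)}_{i,n}$ by $\gamma'_n=\eta_k\geq1$ rather than $1$, which only strengthens the inequality in the relevant direction $i\leq n-2$. The lone sparse entry $\lambda'^{(k+1),\sparse}_{2n+1,n}=\tfrac{\tau_{k+1}}{2\rho^k}-1$ gives $\tfrac{\lambda'_{2n+1,n-1}}{\gamma'_{n-1}}-\tfrac{\lambda'_{2n+1,n}}{\gamma'_n}=-\tfrac{1}{\eta_k}\bigl(\tfrac{\tau_{k+1}}{2\rho^k}-1\bigr)\leq0$, since $\tau_{k+1}\geq 2\rho^k$ (immediate from $\tau_{k+1}=\tfrac12(\tau_k+4\rho^k+\sqrt{\tau_k^2+8\rho^k\tau_k})$). (b) For $n+2\leq j\leq 2n+1$, the relevant entries lie in the bottom-right block, $\lambda'_{i,j}=\tfrac{\tau_{k+1}}{\rho^{2k}}\blambda^{(k)}_{i-n-1,\,j-n-1}$, with stepsizes $\pi^{(k)}$ and $\gamma'_{2n+1}=1$; the common scalar cancels in the ratio, so \eqref{eq:lambdascaled-grad} here is precisely Lemma~\ref{lem:ssspartialsum} applied to $\blambda^{(k)}$ and $\pi^{(k)}$, modulo the positive low-rank boundary contribution at $j=2n+1$ (needed only for the direction $i\leq j-2$, which it satisfies). (c) For the boundary column $j=n+1$: in rows $0\leq i\leq n-1$ column $n+1$ vanishes while column $n$ carries a nonnegative multiplier, so the difference is $\geq0$; row $i=n$ is exempt ($i=j-1$); and for $i\geq n+2$ the only nonzero entry of column $n$ is the sparse one at $i=2n+1$, so one must verify $\tfrac{\lambda'_{2n+1,n}}{\gamma'_n}-\tfrac{\lambda'_{2n+1,n+1}}{\gamma'_{n+1}}\leq0$; using $\blambda^{(k)}_{n,0}=0$ for $k\geq2$ (read off the recursion for $\blambda^{(\cdot)}$), this reduces to $(\rho^k-\eta_k)\,\tau_{k+1}\leq 2\rho^{2k}$.

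I expect this last inequality---and part (c) more broadly---to be the main obstacle, being the only place where the two recursive halves genuinely interact; establishing it amounts to extracting the right algebraic identities from the defining recursions for $\tau_k,\tau_{k+1},\eta_k$ (for instance $\sqrt{\tau_k^2+8\rho^k\tau_k}=\tau_k+4\eta_k-4$, hence $\tau_{k+1}=\tau_k-2+2\rho^k+2\eta_k$, obtained by clearing the square root) followed by a short computation. Everything else is bookkeeping: confirming the sparse and low-rank supports fall where claimed, matching index ranges across the recursion, and quoting Lemma~\ref{lem:ssspartialsum} for the second half. Finally, given Lemma~\ref{lem:sss-gpartialsum}, Theorem~\ref{thm:sss-g} follows exactly as Theorem~\ref{thm:sss}: since proximal GD uses $H=\diag(\gamma'_0,\dots,\gamma'_{n-1})$, Lemma~\ref{lem:partialsum} (with $A=\ltilde'$) writes the entries of $\mtilde'=\ltilde'+\widetilde{H}^{-1}\lhat'$ as scaled partial sums of the column-differences in \eqref{eq:lambdascaled-grad}, giving $\mu'\geq0$---item~(i) of Theorem~\ref{thm:main-grad}---while item~(ii) is automatic for the stated $\xi'$.
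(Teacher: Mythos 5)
Your proposal is correct and follows essentially the same route as the paper: induction on $k$ with base case $k=2$, using the induction hypothesis for columns $1\leq j\leq n$ (handling the sparse entry at $(2n+1,n)$ separately), invoking Lemma~\ref{lem:ssspartialsum} for $\blambda^{(k)}$ with stepsizes $\pi^{(k)}$ in the second half, and reducing the boundary column $j=n+1$ at row $i=2n+1$ to the algebraic inequality $(\rho^k-\eta_k)\tau_{k+1}\leq 2\rho^{2k}$, which the paper verifies via the equivalent identity $\eta_k=1+\rho^k(1-2\rho^k/\tau_{k+1})$. The remaining details you defer (the $k=2$ check and the short $\eta_k$ computation) go through exactly as you anticipate.
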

	\begin{proof}
		For $k = 2$ (note that no such entries are included for $k = 1$), all inequalities are straightforward (as either $\lambda'_{i, j-1}$ or $\lambda'_{i, j}$ is 0) except $\frac{\lambda'_{3, 1}}{\gamma'_1}-\frac{\lambda'_{3, 2}}{\gamma'_2} = \frac{\sqrt{2}}{1+\sqrt{2}} - \frac{2+\sqrt{2}}{\sqrt{2}} \leq 0$ and $\frac{\lambda'_{1, 2}}{\gamma'_2} - \frac{\lambda'_{1, 3}}{\gamma'_3} = \frac{\sqrt{2}}{\sqrt{2}} -1 = 0$.
		
		Assume that the result hold for $k$, and consider $k+1$. For $1 \leq j \leq n$, \eqref{eq:lambdascaled-grad} holds by the induction hypothesis with $w^{(k+1)} = [w^{(k)}, \eta_k, \pi^{(k)}]$.
		
		For $j = n+1$, we have $\lambda'_{i, j-1} = 0$ for all $n+1 \leq i \leq 2n$ and $\lambda'_{i, j} = 0$ for all $0 \leq i \leq n-1$. Thus, it only suffices to consider $i = 2n+1$ where 
		\[\frac{\lambda'_{2n+1, n}}{\gamma'_n} - \frac{\lambda'_{2n+1, n+1}}{\gamma'_{n+1}} = \frac{1}{\eta_k}\left(\frac{\tau_{k+1}}{2\rho^k}-1\right) - \frac{1}{\sqrt{2}}\frac{\tau_{k+1}}{2\rho^{2k}}\sqrt{2} \leq 0 \ \Leftrightarrow \ \eta_k \geq \rho^k\left(1-\frac{2\rho^k}{\tau_{k+1}}\right) \,,
		\]
		which holds from $\eta_k = 1+\frac{\sqrt{\tau_k(\tau_k+8\rho^k)}-\tau_k}{4}=1+\rho^k -\frac{4\rho^k}{\sqrt{\tau_k(\tau_k+8\rho^k)}+\tau_k + 4\rho^k} = 1+\rho^k\left(1-\frac{2\rho^k}{\tau_{k+1}}\right)$.
		
		For $n+2 \leq j \leq 2n+1$, \eqref{eq:lambdascaled-grad} holds from Lemma \ref{lem:ssspartialsum}.
	\end{proof}
	
	By Lemma \ref{lem:sss-gpartialsum}, for $1 \leq j < i \leq n-1$,
	\[\mu'_{i, j} = \gamma'_{j-1}\left(\frac{\sum_{l=0}^{j-1}\lambda'_{l, i-1}}{\gamma'_{i-1}} - \frac{\sum_{l=0}^{j-1} \lambda'_{l, i}}{\gamma'_i}\right) \geq 0\,,
	\]
	and for $1 \leq i < j \leq n$,
	\[\mu'_{i, j} = \gamma'_{j-1}\left(\frac{\sum_{l=j}^n \lambda'_{l, i}}{\gamma'_i} - \frac{\sum_{l=j}^n \lambda'_{l, i-1}}{\gamma'_{i-1}}\right) \geq 0\,.
	\]
	For $i = n$ and $1 \leq j \leq n-1$,
	\[\mu'_{n, j} = \frac{\gamma'_{j-1}}{\gamma'_{n-1}}\sum_{l=0}^{j-1}\lambda'_{l, n-1} \geq 0\,,
	\]
	and for $i = 0$ and $1 \leq j \leq n$, $\begin{bmatrix}
		\mu'_{0, 1} & \dots & \mu'_{0, n}
	\end{bmatrix} = -\bm{1}_n^T \mtilde = e_1$.
	
	For item (ii), as in Theorem \ref{thm:main-grad} we take $\xi' = 1 - \frac{\lambda'_{n, n-1} + \lambda'_{n-1, n}}{R'_n}$. For $k = 1$ (recall $n = 2^k-1$) we have $\lambda'_{n, n-1} + \lambda'_{n-1, n} = 3$; for $k \geq 2$,
	\begin{align*}
		\lambda'_{n, n-1} + \lambda'_{n-1, n} &= \frac{\tau_k}{\rho^{2(k-1)}}(\blambda'^{(k-1)}_{(n-1)/2, (n-3)/2} + \blambda'^{(k-1)}_{(n-3)/2, (n-1)/2}) + \frac{\tau_k}{2\rho^{2(k-1)}}\sqrt{2} \\
		&= \frac{\tau_k}{\rho^{2(k-1)}}\left(\frac{1}{\sqrt{2}}(\rho^{2k-3}-1) + \rho^{2k-3}\right) + \frac{\tau_k}{2\rho^{2(k-1)}}\sqrt{2} \\
		&= \frac{\tau_k}{\sqrt{2}}\,,
	\end{align*}
	where the formula for $a_k := \blambda'^{(k-1)}_{(n-1)/2, (n-3)/2}, b_k := \blambda'^{(k-1)}_{(n-3)/2, (n-1)/2} (
	k \geq 2)$ are obtained from respectively solving $a_2 = 1, a_{l+1} = \rho^2 a_l + \rho\sqrt{2}$ and $b_2 = \rho, b_{l+1} = \rho^2 b_l$ from the recursive definition in Proposition \ref{prop:gswknown}. Finally, from \citep[Lemma 4]{gsw24}, $\frac{1}{\tau_k} \sim \frac{1}{(\rho-1)(1+1/\sqrt{\rho})\rho^k} \sim \frac{\rho-\sqrt{\rho}}{2n^{\log_2 \rho}}$.
\end{proof}

\subsection{Proximal OGM-G}\label{appendix:ogm-g}
Before the proof of Theorem \ref{thm:ogm-g}, we present the following lemma that will be used throughout our analysis. This quantitatively characterizes the sequences $\{\theta_i\}$ and $\{\varphi_i\}$.

\begin{lemma}\label{lem:thetaphi}
	\begin{itemize}
		\item[(a)] (Difference of $\{\theta_i\}$). $\theta_{i+1} - \theta_i > \frac{1}{2}$ for all $0 \leq i \leq n-1$ and $\theta_{i+1}-\theta_i < \frac{3}{4}$ for all $0 \leq i \leq n-2$.
		\item[(b)] (Concentration of $\{\varphi_i\}$). $\varphi_{i+1} \geq \frac{4}{3}, \varphi_{i+2} \geq \frac{15}{11}, \varphi_{i+3} \geq \frac{7}{5}$ for all $i \geq 1$ and $\varphi_i \leq \frac{3}{2}, \varphi_{i+1} \leq 1+\frac{1}{\sqrt{2}}$ for all $1 \leq i \leq n-1$.
	\end{itemize}
\end{lemma}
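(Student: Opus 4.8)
The plan is to reduce both (a) and (b) to elementary control on $\{\theta_i\}$: strict monotonicity with $\theta_i\ge 1$, together with a two-sided bound on consecutive gaps. For (a), I would first note from $\theta_{i+1}=\tfrac{1+\sqrt{1+4\theta_i^2}}{2}$ that $\sqrt{1+4\theta_i^2}$ lies strictly between $2\theta_i$ and $1+2\theta_i$ (the upper bound by squaring), so $\theta_i+\tfrac12<\theta_{i+1}<1+\theta_i\le 2\theta_i$ for $0\le i\le n-2$ (using $\theta_i\ge\theta_0=1$); the terminal recursion $\theta_n=\tfrac{1+\sqrt{1+8\theta_{n-1}^2}}{2}$ likewise gives $\theta_n>\theta_{n-1}+\tfrac12$. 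Combined with the identity $(\theta_{i+1}-\theta_i)(\theta_{i+1}+\theta_i)=\theta_{i+1}$ coming from $\theta_{i+1}^2-\theta_{i+1}-\theta_i^2=0$, this yields $\theta_{i+1}-\theta_i=\tfrac{\theta_{i+1}}{\theta_{i+1}+\theta_i}\in(\tfrac12,\tfrac23)$ for $0\le i\le n-2$, which proves (a) with room to spare. Telescoping then gives $\theta_j>\theta_1+\tfrac{j-1}{2}=\tfrac{1+\sqrt5}{2}+\tfrac{j-1}{2}$ for $j\ge 2$; in particular $\theta_2>2$, $\theta_3>\tfrac52$, $\theta_4>3$, which is all I will need below.

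Next I would convert the $\theta$-recursions into closed forms for $\varphi_i$. Dividing $\theta_i^2-\theta_i-\theta_{i-1}^2=0$ by $\theta_i^2$ gives $\theta_{i-1}/\theta_i=\sqrt{1-1/\theta_i}$, hence $\varphi_i=1+\tfrac12\sqrt{1-1/\theta_i}$ for $1\le i\le n-1$; likewise $\theta_n^2-\theta_n-2\theta_{n-1}^2=0$ gives $\varphi_n=1+\tfrac1{\sqrt2}\sqrt{1-1/\theta_n}$. Both expressions are increasing in the relevant $\theta$-value. The upper bounds in (b) are then immediate: $\sqrt{1-1/\theta_i}<1$ gives $\varphi_i<\tfrac32$ for $i<n$ and $\varphi_n<1+\tfrac1{\sqrt2}$, and since $\tfrac32<1+\tfrac1{\sqrt2}$ this also yields $\varphi_j\le 1+\tfrac1{\sqrt2}$ for all $2\le j\le n$.

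For the lower bounds, observe that for $1\le j\le n-1$ the inequality $\varphi_j\ge c$ (with $1\le c<\tfrac32$) is equivalent to $\theta_j\ge\bigl(1-4(c-1)^2\bigr)^{-1}$. Plugging in $c=\tfrac43,\tfrac{15}{11},\tfrac75$ gives thresholds $\tfrac95,\tfrac{121}{57},\tfrac{25}{9}$ respectively, each met once $j\ge2,3,4$ by the estimates $\theta_2>2$, $\theta_3>\tfrac52$, $\theta_4>3$. The index $j=n$ must be handled separately since $\varphi_n$ obeys the other closed form, but there it is even easier: when $n\ge2$ we have $\theta_n>\theta_{n-1}+\tfrac12\ge\theta_1+\tfrac12>2$, so $\varphi_n=1+\tfrac1{\sqrt2}\sqrt{1-1/\theta_n}>1+\tfrac1{\sqrt2}\cdot\tfrac1{\sqrt2}=\tfrac32$, which exceeds all three claimed lower bounds.

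I do not expect a genuine obstacle; the content is routine verification. The two places needing care are the boundary index $i=n$, where $\varphi_n$ follows a different formula and so each bound must be re-checked there, and the degenerate small cases $n=1,2$, where several stated index ranges are empty or collapse to a single explicit value (e.g.\ $\varphi_1=\tfrac{1+\sqrt5}{2}$ when $n=1$, $\varphi_2=1+\theta_1/\theta_2$ when $n=2$) that one checks directly. Everything else follows mechanically from the monotone formulas $\varphi_i=1+\tfrac12\sqrt{1-1/\theta_i}$ and $\varphi_n=1+\tfrac1{\sqrt2}\sqrt{1-1/\theta_n}$ together with the elementary bounds on $\theta_j$.
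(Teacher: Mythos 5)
Your proof is correct, and it is organized around a device the paper does not use: the closed forms $\varphi_i = 1+\tfrac12\sqrt{1-1/\theta_i}$ (for $i<n$) and $\varphi_n = 1+\tfrac{1}{\sqrt2}\sqrt{1-1/\theta_n}$, obtained by dividing the defining quadratic for $\theta_i$ by $\theta_i^2$. This changes the shape of the argument in (b): the paper rewrites each lower bound $\varphi_j\geq c$ as a \emph{multiplicative} comparison $\theta_j\leq \tfrac{1}{2(c-1)}\theta_{j-1}$ and verifies it from the additive gap $\theta_j\leq\theta_{j-1}+\tfrac34$ together with a lower bound on $\theta_{j-1}$, whereas you rewrite it as an \emph{absolute} threshold $\theta_j\geq(1-4(c-1)^2)^{-1}$ and verify it from the telescoped lower bound on $\theta_j$ alone. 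Your route has two advantages: the monotonicity of $\varphi_i$ in $i$ and the bound $\varphi_n\geq\tfrac32$ (which the paper asserts as ``straightforward'') become immediate from the closed forms, and the boundary index $j=n$ is handled by the same one-line computation. In part (a) you also deviate slightly, using the identity $\theta_{i+1}-\theta_i=\theta_{i+1}/(\theta_{i+1}+\theta_i)$ together with $\theta_{i+1}<1+\theta_i\leq 2\theta_i$ to get the sharper gap bound $2/3$ in place of the paper's $3/4$ (obtained there by completing the square under the radical); both are equally elementary. One small correction to a parenthetical: when $n=1$, $\theta_1$ obeys the terminal recursion, so $\theta_1=2$ and $\varphi_1=1+\theta_0/\theta_1=\tfrac32$, not $\tfrac{1+\sqrt5}{2}$; this is immaterial since every claim of the lemma involving $\varphi$ is vacuous for $n=1$, but the stated value is wrong as written.
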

\begin{proof}
	\begin{itemize}
		\item[(a)] For $0 \leq i \leq n-2$, $\theta_i + \frac{1}{2} < \theta_{i+1} = \frac{1 + \sqrt{1 + 4\theta_i^2}}{2} < \frac{1 + \sqrt{4\theta_i^2 + 2\theta_i + \frac{1}{4}}}{2} = \theta_i + \frac{3}{4}$. $\theta_n - \theta_{n-1} > \frac{1}{2}$ directly follows.
		\item[(b)] The upper bounds hold from $\theta_{i-1} \leq \theta_i \Rightarrow \varphi_i = 1+\frac{\theta_{i-1}}{2\theta_i} \leq \frac{3}{2}$ for all $1 \leq i \leq n-1$ and $\sqrt{2}\theta_{n-1} \leq \theta_n \Rightarrow \varphi_n = 1 + \frac{\theta_{n-1}}{\theta_n} \leq 1+\frac{1}{\sqrt{2}}$.
		For the lower bounds, it is straightforward to see that $\varphi_i$ is increasing in $i$ and $\varphi_n \geq \frac{3}{2}$. Thus, it suffices to show that $\varphi_2 \geq \frac{4}{3}, \varphi_3 \geq \frac{15}{11}, \varphi_4 \geq \frac{7}{5}$ (for $n \geq 5$).
		
		These are respectively equivalent to 
		\[\theta_2 \leq \frac{3}{2}\theta_1, \quad \theta_3 \leq \frac{11}{8}\theta_2, \quad \theta_4 \leq \frac{5}{4}\theta_3\,.\]
		From (a), we know that $\theta_i \geq \frac{i}{2}+1$ and in particular $\theta_1 \geq \frac{3}{2}, \theta_2 \geq 2$. Thus from (a), $\theta_2 \leq \theta_1 + \frac{3}{4} \leq \frac{3}{2}\theta_1$ and $\theta_3 \leq \theta_2 + \frac{3}{4} \leq \frac{11}{8}\theta_2$, proving the first two inequalities. The last inequality holds from $\theta_4 = \frac{1 + \sqrt{1 + 4\theta_3^2}}{2} \leq \frac{1 + \frac{5}{2}\theta_3-1}{2} = \frac{5}{4}\theta_3$, from $\theta_3 \geq \frac{5}{2}$.
	\end{itemize}
\end{proof}

\begin{proof}[Proof of Theorem \ref{thm:ogm-g}]
	For item (i), we calculate $\mtilde' = \ltilde' + \widetilde{H}^{-1}\lhat'$ and show the corresponding nonnegativity. For notational convenience, we will mainly consider these matrices scaled by $1/\theta_n^2$.

	First, note that from
	\[[(1/\theta_n^2)\ltilde']_{i, j} = \begin{cases}
		-\frac{1}{2\theta_{n-i-1}^2} & j = i+1 \\
		\frac{1}{2\theta_{n-i-1}^2} & j = i + 2 \\
		\frac{1}{2\theta_{n-1}^2} - \frac{1}{\theta_n^2} & i = n, j = 1 \\
		\frac{1}{2\theta_{n-j}^2} - \frac{1}{2\theta_{n-j+1}^2} & i = n, 2 \leq j \leq n \\
		0 & \text{else}\,,
	\end{cases}\]
	its $(j-1, j)$ entries are negative, whereas the rest are nonnegative.
	
	The main technical part is to calculate $\widetilde{H}^{-1}\lhat'$. First, note that the stepsize matrix $H$ of OGM-G satisfies the following (the proof is analogous to that of Lemma \ref{lem:ogmHmatrix} and hence is omitted):
	\begin{gather*}
		H = U(\theta_n, \dots, \theta_1)^{-1}U(\varphi_n, \dots, \varphi_1)\diag(2\theta_{n-1}, \dots, 2\theta_0) \\
		\Rightarrow \widetilde{H}^{-1} = U_n^{-1}\diag(1/(2\theta_{n-1}), \dots, 1/(2\theta_0))U(\varphi_n, \dots, \varphi_1)^{-1}U(\theta_n, \dots, \theta_1)\,.
	\end{gather*}
	Thus with $[(1/\theta_n^2)\lhat']_{i, j} = \begin{cases}
		\frac{1}{2\theta_{n-i}^2} & i = j-1 \\
		\frac{1}{2\theta_{n-j}^2} & j = i-1 \\
		\frac{1}{\theta_{n}^2} - \frac{1}{\theta_{n-1}^2} & i = j = 1 \\
		-\frac{1}{\theta_{n-i}^2} & i = j \geq 2\,,
	\end{cases}$ we have
	\[[(1/\theta_n^2)U(\theta_n, \dots, \theta_1)\lhat']_{i, j} = \begin{cases}
		\frac{1}{2\theta_{n-j+1}^2} - \frac{1}{2\theta_{n-j}^2} & i+2 \leq j \leq n-1 \\
		\frac{1}{2\theta_1^2} - \frac{1}{\theta_0^2} & i +2 \leq j = n \\
		\frac{\theta_{n-i+1}}{2\theta_{n-i}^2} - \frac{1}{2\theta_{n-i-1}^2} & i + 1 = j \leq n-1 \\
		\frac{\theta_2}{2\theta_1^2} - \frac{1}{\theta_0^2} & i + 1 = j = n \\
		-\frac{\theta_n}{2\theta_{n-1}^2} & i = j = 1 \\
		-\frac{2\theta_{n-i+1}-1}{2\theta_{n-i}^2} & 2 \leq i  = j \leq n-1 \\
		-\frac{\theta_1}{\theta_0^2} & i =j =n \\
		\frac{1}{2\theta_{n-i+1}} & i = j+1 \\
		0 & \text{else}\,.
	\end{cases}
	\]
	Similarly as before, we fix $j$ and solve an equation with respect to the $j$th column. We divide this step into two cases.
	
	\paragraph*{Typical columns ($2 \leq j \leq n-1$).} Here we consider a $j$th column of $(1/\theta_n^2)U(\theta_n, \dots, \theta_1)\lhat'$, which is given as
	\[\begin{bmatrix}
		\left(\frac{1}{2\theta_{n-j+1}^2} - \frac{1}{2\theta_{n-j}^2}\right)\bm{1}_{j-2} \\
		\frac{\theta_{n-j+2}}{2\theta_{n-j+1}^2} - \frac{1}{2\theta_{n-j}^2} \\
		-\frac{2\theta_{n-j+1}-1}{2\theta_{n-j}^2} \\
		\frac{1}{2\theta_{n-j}} \\
		\bm{0}_{n-j-1}
	\end{bmatrix}\,.
	\]
	This column, after premultiplying $U(\varphi_n, \dots, \varphi_1)^{-1}$, satisfies the following properties. The proof for the following lemma is similar to that of Lemma \ref{lem:ogmls} and is deferred until later.
	\begin{lemma}[Linear system for typical columns]\label{lem:ogmgls1}
		For $2 \leq j \leq n-1$, let $x'(j)$ be the unique solution $x' = [x'_1, \dots, x'_n]$ of
		\[U(\varphi_n, \dots, \varphi_1)x' = 2\theta_{n-j+1}\theta_{n-j}^2\begin{bmatrix}
			\left(\frac{1}{2\theta_{n-j+1}^2} - \frac{1}{2\theta_{n-j}^2}\right)\bm{1}_{j-2} \\
			\frac{\theta_{n-j+2}}{2\theta_{n-j+1}^2} - \frac{1}{2\theta_{n-j}^2} \\
			-\frac{2\theta_{n-j+1}-1}{2\theta_{n-j}^2} \\
			\frac{1}{2\theta_{n-j}} \\
			\bm{0}_{n-j-1}
		\end{bmatrix} = \begin{bmatrix}
			-\bm{1}_{j-2} \\
			\theta_{n-j+2}\theta_{n-j+1} - \theta_{n-j+2} - \theta_{n-j+1} \\
			-(2\theta_{n-j+1}-1)\theta_{n-j+1} \\
			\theta_{n-j+1}\theta_{n-j} \\
			\bm{0}_{n-j-1}
		\end{bmatrix}\,.
		\]
		Then the following hold.
		\begin{itemize}
			\item[(a)] $x'_n = \dots = x'_{j+2} = 0, x'_{j+1} > 0, x'_j < 0$ and $x'_{j-1} > 0$.
			\item[(b)] For $j \geq 3$, $x'_{j-2} < 0$ and $x'_k = \frac{\varphi_{n-k}-1}{\varphi_{n-k+1}}x'_{k+1}$ for all $1 \leq k \leq j-3$.
			\item[(c)] $-\theta_{n-j+1}^2 + \frac{1}{2}x'_{j-1} - \frac{\theta_{n-j+1}}{2\theta_{n-j}}x'_j \geq 0$.
			\item[(d)] For $j \geq 3$, $\theta_{n-j}^2 + \frac{\theta_{n-j+1}}{2\theta_{n-j+2}}x'_{j-2} - \frac{1}{2}x'_{j-1} \geq 0$.
		\end{itemize}
	\end{lemma}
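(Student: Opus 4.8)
The plan is to solve the upper-triangular system by back-substitution, working from the last coordinate $x'_n$ upward to $x'_1$, reading off the signs and the explicit recurrence along the way. Recall that $U(\varphi_n,\dots,\varphi_1)$ has $\varphi_{n-i+1}$ in the $i$-th diagonal position and $1$ strictly above the diagonal, and that the prescribed right-hand side is supported on coordinates $1,\dots,j+1$. The rows $i=n,n-1,\dots,j+2$ of the system read $\varphi_{n-i+1}x'_i+\sum_{k>i}x'_k=0$, and solving them in this order gives $x'_n=\dots=x'_{j+2}=0$. Row $j+1$ then gives $\varphi_{n-j}x'_{j+1}=\theta_{n-j+1}\theta_{n-j}$, hence $x'_{j+1}=\theta_{n-j+1}\theta_{n-j}/\varphi_{n-j}>0$. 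Row $j$ gives $\varphi_{n-j+1}x'_j=-(2\theta_{n-j+1}-1)\theta_{n-j+1}-x'_{j+1}$; since $\theta$ is increasing with $\theta_0=1$ we have $2\theta_{n-j+1}-1>0$ and $x'_{j+1}>0$, so $x'_j<0$. Row $j-1$ gives $\varphi_{n-j+2}x'_{j-1}=\big(\theta_{n-j+2}\theta_{n-j+1}-\theta_{n-j+2}-\theta_{n-j+1}\big)-x'_j-x'_{j+1}$; substituting the closed forms of $x'_j$ and $x'_{j+1}$ and using the recurrences $\theta_{i+1}(\theta_{i+1}-1)=\theta_i^2$ (for interior indices) and $\theta_n(\theta_n-1)=2\theta_{n-1}^2$ (when the index $n$ occurs) together with the $\varphi$-bounds of Lemma~\ref{lem:thetaphi}(b), one checks the right-hand side is positive, so $x'_{j-1}>0$. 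This establishes part (a).

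For part (b), when $j\ge 3$ the coordinates $1,\dots,j-2$ of the right-hand side all equal $-1$, so subtracting the equation of row $k+1$ from that of row $k$ (both valid with right-hand side $-1$ for $1\le k\le j-3$) cancels every $x'_l$ with $l\ge k+2$ and yields $\varphi_{n-k+1}x'_k=(\varphi_{n-k}-1)x'_{k+1}$, the claimed recurrence; since every $\varphi_i>1$ the factor is positive, so $x'_1,\dots,x'_{j-2}$ all share the sign of $x'_{j-2}$. It remains to show $x'_{j-2}<0$: row $j-2$ reads $\varphi_{n-j+3}x'_{j-2}=-1-x'_{j-1}-x'_j-x'_{j+1}$, and eliminating $x'_j+x'_{j+1}$ via the row-$(j-1)$ equation gives $\varphi_{n-j+3}x'_{j-2}=-1-b_{j-1}+(\varphi_{n-j+2}-1)x'_{j-1}$, where $b_{j-1}=\theta_{n-j+2}\theta_{n-j+1}-\theta_{n-j+2}-\theta_{n-j+1}$ and $1+b_{j-1}=(\theta_{n-j+2}-1)(\theta_{n-j+1}-1)$; one then bounds $(\varphi_{n-j+2}-1)x'_{j-1}$ from above using $\varphi_{n-j+2}\le 3/2$ and the explicit value of $x'_{j-1}$, and verifies it is strictly less than $1+b_{j-1}$, giving $x'_{j-2}<0$. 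Finally, parts (c) and (d) are obtained by substituting the closed forms of $x'_{j-1}$, $x'_j$, $x'_{j-2}$ into the stated inequalities and reducing, via the $\theta$-recurrences and Lemma~\ref{lem:thetaphi}, to comparisons among a few $\theta_i$'s and $\varphi_i$'s.

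The main obstacle is the sign verifications for $x'_{j-1}$ and $x'_{j-2}$ and the inequalities (c), (d): these are where the algebra is delicate. The index $n-j+1$ ranges from $2$ (at $j=n-1$) up to $n-1$ (at $j=2$), and the adjacent index $n-j+2$ can equal $n$ (at $j=2$), so one must invoke both the generic recurrence $\theta_{i+1}^2=\theta_{i+1}+\theta_i^2$ and the terminal one $\theta_n^2=\theta_n+2\theta_{n-1}^2$; moreover the small-index cases $j\in\{2,3\}$, where the refined lower bounds $\varphi_{i+2}\ge 15/11$ and $\varphi_{i+3}\ge 7/5$ from Lemma~\ref{lem:thetaphi}(b) come into play, may need to be separated from the generic case. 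Everything else—the back-substitution and the telescoping that produces the recurrence in (b)—is routine.
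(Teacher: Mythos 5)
Your plan follows the paper's proof exactly: back-substitution through the upper-triangular system to get $x'_n=\dots=x'_{j+2}=0$ and the closed forms of $x'_{j+1}$, $x'_j$, $x'_{j-1}$; the telescoping of consecutive rows with right-hand side $-1$ to obtain the recurrence $x'_k=\frac{\varphi_{n-k}-1}{\varphi_{n-k+1}}x'_{k+1}$; and reduction of the remaining sign claims and of (c), (d) to estimates on $\theta_i$ and $\varphi_i$ via Lemma~\ref{lem:thetaphi}. Your reformulation of $x'_{j-2}<0$ as $(\varphi_{n-j+2}-1)x'_{j-1}<1+b_{j-1}$ is algebraically equivalent to the paper's criterion $x'_{j-1}+x'_j+x'_{j+1}>-1$, and your identification of the delicate points (the terminal recurrence $\theta_n^2=\theta_n+2\theta_{n-1}^2$ when $n-j+2=n$, and the refined lower bounds on $\varphi_{i+2},\varphi_{i+3}$) is exactly where the paper's case splits occur.

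That said, there is a gap of execution rather than of strategy: the positivity of $x'_{j-1}$, the negativity of $x'_{j-2}$, and all of parts (c) and (d) are asserted to follow from ``substituting and reducing,'' but none of these reductions is carried out, and they are the substantive content of the lemma. In the paper each one requires a specific chain of bounds --- e.g., $x'_{j-1}>0$ uses $(\varphi_{i+1}-1)x'_j>-\tfrac12(3\theta_{i+1}^2-\theta_{i+1})$ together with $\theta_{i+2}\geq\theta_{i+1}+\tfrac12$ to land on $\tfrac{1}{2\varphi_{i+2}}(3\theta_{i+1}^2-4\theta_{i+1}-1)>0$, and part (c) needs a genuine dichotomy between $i\neq n-2$ and $i=n-2$ with different constants ($\tfrac{1}{\varphi_{i+2}}\geq\tfrac23$ versus $\tfrac{1}{\varphi_{i+2}}\geq 2-\sqrt2$) and different lower bounds on $\theta_{i+1}$ in each branch. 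These verifications do all go through, but since they constitute most of the proof, the proposal as written is a correct outline rather than a complete argument; to finish it you would need to reproduce estimates of the kind appearing in the paper's Appendix~\ref{appendix:ogm-g}.
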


	Note that $[U_n^{-1}\diag(1/(2\theta_{n-1}), \dots, 1/(2\theta_0))]_{i, j} = \begin{cases}
		\frac{1}{2\theta_{n-i}} & i = j \\
		-\frac{1}{2\theta_{n-j}} & j = i + 1 \\
		0 & \text{else}
	\end{cases}$ holds. Thus with Lemma \ref{lem:ogmgls1} and $x' = x'(j)$, the $(i, j)$ entry of $(1/\theta_n^2)\widetilde{H}^{-1}\lhat' = (1/\theta_n^2)U_n^{-1}\diag(1/(2\theta_{n-1}), \dots, 1/(2\theta_0))U_n(\varphi_n, \dots, \varphi_1)^{-1}U(\theta_n, \dots, \theta_1)\lhat'$ is given as
	\[\begin{cases}\frac{1}{2\theta_{n-j+1}\theta_{n-j}^2}\left(\frac{1}{2\theta_{n-i}}x'_i - \frac{1}{2\theta_{n-i-1}}x'_{i+1}\right) &  1 \leq i \leq n-1 \\
		\frac{1}{2\theta_{n-j+1}\theta_{n-j}^2}\left(\frac{1}{2\theta_0}x'_n\right) \geq 0 & i = n\,.
	\end{cases}
	\]
	In particular, except for $i \in \{j-2, j\}$, the $(i, j)$ entry of $(1/\theta_n^2)\widetilde{H}^{-1}\lhat'$ is nonnegative as
	\begin{align*}
		\frac{1}{2\theta_{n-i}}x'_i - \frac{1}{2\theta_{n-i-1}}x'_{i+1} \geq \left(\frac{1}{2\theta_{n-i}}-\frac{1}{2\theta_{n-i-1}}\right)x'_{i+1} &\geq 0, \ 1 \leq i \leq j-3,\\
		\frac{1}{2\theta_{n-j+1}}x'_{j-1} - \frac{1}{2\theta_{n-j}}x'_j &\geq 0, \\
		\frac{1}{2\theta_{n-j-1}}x'_{j+1} &\geq 0.
	\end{align*}
	by Lemma \ref{lem:ogmgls1} (a) and (b).

	Summing up what we have shown so far, for $(1/\theta_n^2)(\ltilde + \widetilde{H}^{-1}\lhat)$ it suffices to show that the $(i, j)$ entry is nonnegative for $i \in \{j-2, j-1\}$. For $i = j-2$, this entry is given as (from Lemma \ref{lem:ogmgls1} (c))
	\[\frac{1}{2\theta_{n-j+1}^2} + \frac{1}{2\theta_{n-j+1}\theta_{n-j}^2}\left(\frac{1}{2\theta_{n-j+2}}x'_{j-2} - \frac{1}{2\theta_{n-j+1}}x'_{j-1}\right) = \frac{1}{2\theta_{n-j+1}^2 \theta_{n-j}^2}\left(\theta_{n-j}^2 + \frac{\theta_{n-j+1}}{2\theta_{n-j+2}}x'_{j-2} - \frac{1}{2}x'_{j-1}\right) \geq 0\,,
	\]
	and for $i = j-1$ this entry is given as (from Lemma \ref{lem:ogmgls1} (d))
	\[-\frac{1}{2\theta_{n-j}^2} + \frac{1}{2\theta_{n-j+1}\theta_{n-j}^2}\left(\frac{1}{2\theta_{n-j+1}}x'_{j-1} - \frac{1}{2\theta_{n-j}}x'_{j}\right) = \frac{1}{2\theta_{n-j+1}^2\theta_{n-j}^2}\left(-\theta_{n-j+1}^2 + \frac{1}{2}x'_{j-1} - \frac{\theta_{n-j+1}}{2\theta_{n-j}}x'_j\right) \geq 0\,.
	\]
	
	\paragraph*{Special columns ($j \in \{1, n\}$).}
	
	The first column of $(1/\theta_n^2)U(\theta_n, \dots, \theta_1)\lhat'$ is given as
	\[\begin{bmatrix}
		-\frac{\theta_{n}}{2\theta_{n-1}^2} \\
		\frac{1}{2\theta_{n-1}} \\
		\bm{0}_{n-2}
	\end{bmatrix}\,,
	\]
	from which it is trivial to check that the $(i, 1)$ entry of $(1/\theta_n^2)\widetilde{H}^{-1}\lhat$ is nonnegative for all $2 \leq i \leq n$ (in particular, is equal to 0 for all $3 \leq i \leq n$).

	The $n$th column of $(1/\theta_n^2)U(\theta_n, \dots, \theta_1)\lhat'$ is given as
	\[\begin{bmatrix}
		(\frac{1}{2\theta_1^2} - \frac{1}{\theta_0^2})\bm{1}_{n-2} \\
		\frac{\theta_{2}}{2\theta_1^2}- \frac{1}{\theta_0^2} \\
		-\frac{\theta_1}{\theta_0}
	\end{bmatrix}\,.
	\]
	As in the case of typical columns, we solve the corresponding linear system as follows.
	\begin{lemma}[Linear system for $n$th column]\label{lem:ogmgls2}
		Let $y' = [y'_1, \dots, y'_n]$ be the unique solution of
		\[U(\varphi_n, \dots, \varphi_1)y' = 2\theta_1^2\begin{bmatrix}
			(\frac{1}{2\theta_1^2} - \frac{1}{\theta_0^2})\bm{1}_{n-2} \\
			\frac{\theta_{2}}{2\theta_1^2}- \frac{1}{\theta_0^2} \\
			-\frac{\theta_1}{\theta_0}
		\end{bmatrix} = \begin{bmatrix}
			-(2\theta_1^2 - 1)\bm{1}_{n-2} \\
			\theta_2 - 2\theta_1^2 \\
			-2\theta_1^3
		\end{bmatrix}\,.
		\]
		Then the following hold.
		\begin{itemize}
			\item[(a)] $y'_n = -4\theta_1 < 0, y'_{n-1} = \frac{1}{\varphi_2}(\theta_2 + 2\theta_1 - 2) > 0, y'_{n-2} = -\frac{1}{\varphi_3}(\theta_2 - 1 - (\varphi_2-1)y'_{n-1}) < 0$.
			\item[(b)] $y'_k = \frac{\varphi_{n-k}-1}{\varphi_{n-k+1}}y'_{k+1}$ for all $1 \leq k \leq n-3$.
		\end{itemize}
	\end{lemma}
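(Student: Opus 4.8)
The plan is to solve the triangular system by back-substitution, exactly as in the proof of Lemma~\ref{lem:ogmls}, keeping in mind that $U(\varphi_n, \dots, \varphi_1)$ has diagonal entry $\varphi_{n-i+1}$ in row $i$ and all $1$'s strictly above the diagonal. Writing $b=[b_1,\dots,b_n]$ for the right-hand side (so $b_1=\dots=b_{n-2}=-(2\theta_1^2-1)$, $b_{n-1}=\theta_2-2\theta_1^2$, $b_n=-2\theta_1^3$), row $i$ of the system reads $\varphi_{n-i+1}\,y'_i+\sum_{j>i}y'_j=b_i$. Starting from the bottom, row $n$ gives $\varphi_1 y'_n=-2\theta_1^3$; since $\theta_0=1$ and hence (from the $\{\theta_i\}$ recursion, as used in Lemma~\ref{lem:ogmHmatrix}) $2\varphi_1=2\theta_0\varphi_1=\theta_1+2\theta_0-1=\theta_1+1=\theta_1^2$, this yields $y'_n=-2\theta_1^3/\varphi_1=-4\theta_1$. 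Feeding this into row $n-1$ and using $\theta_1^2=\theta_1+1$ gives $\varphi_2 y'_{n-1}=b_{n-1}-y'_n=\theta_2-2\theta_1^2+4\theta_1=\theta_2+2\theta_1-2$, i.e.\ the stated $y'_{n-1}$. Feeding both into row $n-2$ and simplifying with $\theta_1^2=\theta_1+1$ gives $\varphi_3 y'_{n-2}=b_{n-2}-y'_{n-1}-y'_n=2\theta_1-1-y'_{n-1}$, which equals $-(\theta_2-1-(\varphi_2-1)y'_{n-1})$ after substituting $\varphi_2 y'_{n-1}=\theta_2+2\theta_1-2$; this is part~(a)'s third identity.

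For the signs in~(a): $y'_n=-4\theta_1<0$ is immediate, and $y'_{n-1}>0$ since $\varphi_2>0$ and $\theta_2+2\theta_1-2>0$. For $y'_{n-2}<0$ it suffices to show $\theta_2-1-(\varphi_2-1)y'_{n-1}>0$. Substituting $y'_{n-1}=(\theta_2+2\theta_1-2)/\varphi_2$ and $\varphi_2=1+\theta_1/(2\theta_2)$, clearing denominators reduces this to $2\theta_2(\theta_2-1)>\theta_1(2\theta_1-1)$; using the identity $\theta_2^2-\theta_2=\theta_1^2$ the left side is exactly $2\theta_1^2$, so the inequality becomes $2\theta_1^2>2\theta_1^2-\theta_1$, i.e.\ $\theta_1>0$, which holds. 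This is the only step requiring any thought, and it is mild.

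For part~(b), the recurrence follows by subtracting row $k+1$ from row $k$ of the triangular system: $\varphi_{n-k+1}y'_k-\varphi_{n-k}y'_{k+1}+y'_{k+1}=b_k-b_{k+1}$. For $1\leq k\leq n-3$ both $b_k$ and $b_{k+1}$ lie in the constant block $b_1=\dots=b_{n-2}=-(2\theta_1^2-1)$, so the right side vanishes and we obtain $\varphi_{n-k+1}y'_k=(\varphi_{n-k}-1)y'_{k+1}$, i.e.\ $y'_k=\frac{\varphi_{n-k}-1}{\varphi_{n-k+1}}y'_{k+1}$ as claimed (the range stops at $k=n-3$ precisely because $b_{n-2}\neq b_{n-1}$). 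All of the above is elementary back-substitution together with the two quadratic identities $\theta_1^2=\theta_1+1$ and $\theta_2^2-\theta_2=\theta_1^2$; I expect no real obstacle beyond careful bookkeeping of the reversed indexing in $U(\varphi_n,\dots,\varphi_1)$.
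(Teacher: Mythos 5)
Your proposal is correct and follows essentially the same route as the paper: back-substitution on the upper-triangular system using the identities $2\theta_0\varphi_1=\theta_1+1=\theta_1^2$ and $\theta_1^2=\theta_1+1$, with part (b) coming from differencing consecutive rows over the constant block of the right-hand side. The only (minor) divergence is the sign of $y'_{n-2}$: you verify $\theta_2-1-(\varphi_2-1)y'_{n-1}>0$ exactly via $\theta_2^2-\theta_2=\theta_1^2$ (valid since this entry only exists for $n\geq 3$), whereas the paper bounds $1-1/\varphi_2\leq 1/3$ and uses $\theta_2-\theta_1>\tfrac12$; both are fine.
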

	
	For $1 \leq i \leq n-3$, the $(i, n)$ entry of $(1/\theta_n^2)\wt{H}^{-1}\lhat'$ is nonnegative from $\frac{1}{2\theta_{n-i}}y'_i - \frac{1}{2\theta_{n-i-1}}y'_{i+1} \geq \left(\frac{1}{2\theta_{n-i}} - \frac{1}{2\theta_{n-i-1}}\right)y'_{i+1} \geq 0$. 
	
	The $(n-2, n)$ entry of $(1/\theta_n^2)(\ltilde + \widetilde{H}^{-1}\lhat)$ is $\frac{1}{2\theta_1^2}\left(\frac{1}{2\theta_2}y'_{n-2} - \frac{1}{2\theta_1}y'_{n-1}\right) + \frac{1}{2\theta_1^2} \geq \frac{1}{2\theta_1^3}\left(\theta_1 - \frac{1}{2}y'_{n-1}+ \frac{1}{2}y'_{n-2}\right)$ where
	\begin{align*}
		\theta_1 - \frac{1}{2}y'_{n-1} + \frac{1}{2}y'_{n-2} &= \theta_1 - \frac{1}{2}y'_{n-1} - \frac{1}{2\varphi_3}(\theta_2 - 1 - (\varphi_2-1)y'_{n-1}) \\
		&\geq \theta_1 - \frac{1}{2}y'_{n-1} - \frac{3}{8}(\theta_{2} - 1 - (\varphi_2-1)y'_{n-1}) \\
		&= \theta_1 - \frac{3}{8}(\theta_2-1)+\frac{3\varphi_2 - 7}{8}y'_{n-1} \\
		&= \theta_1 - \frac{3}{8}(\theta_2-1) + \left(\frac{3}{8}-\frac{7}{8\varphi_2}\right)(\theta_2 +2\theta_1 - 2) \\
		&\geq \theta_1 - \frac{3}{8}(\theta_2-1) - \frac{9}{32}(\theta_2 + 2\theta_1 - 2) \\
		&= \frac{7}{16}\theta_1 - \frac{21}{32}\theta_2 + \frac{15}{16} \\
		&\geq \frac{7}{16}\theta_1 - \frac{21}{32}\left(\theta_1 + \frac{3}{4}\right) + \frac{15}{16} \\
		&\geq \frac{7(2 - \theta_1)}{32} \geq 0\,,
	\end{align*}
	and the $(n-1, n)$ entry of $(1/\theta_n^2)(\ltilde + \widetilde{H}^{-1}\lhat)$ is $\frac{1}{2\theta_1^2}\left(\frac{1}{2\theta_1}y'_{n-1} - \frac{1}{2\theta_0}y'_n\right) - \frac{1}{2\theta_0^2} \geq \frac{1}{2\theta_1^3}\left(\frac{1}{2}y'_{n-1} - \frac{3}{4}y'_n - \theta_1^3\right)$ where
	\begin{align*}
		-\theta_1^3 + \frac{1}{2}y'_{n-1} - \frac{3}{4}y'_n &= -\theta_1^3 + \frac{1}{2\varphi_2}(\theta_2 +2\theta_1 - 2) + 3\theta_1 \\
		&= \theta_1 - 1 + \frac{1}{2\varphi_2}(\theta_2 + 2\theta_1 - 2) \geq 0 \,.
	\end{align*}
	
	\paragraph*{Nonnegativity of $\mu_{0, j}$.}
	Finally, for $\begin{bmatrix}\mu_{0, 1} & \dots & \mu_{0, n} \end{bmatrix} = -\bm{1}_n^T(\ltilde' + \widetilde{H}^{-1}\lhat')$, we have
	\[-(1/\theta_n^2)\bm{1}_n^T\ltilde' = \begin{bmatrix}
		\frac{1}{\theta_n^2} - \frac{1}{2\theta_{n-1}^2} & \frac{1}{2\theta_{n-1}^2} & \bm{0}_{n-2}^T
	\end{bmatrix}
	\]
	and 
	\begin{align*}
		-(1/\theta_n^2)\bm{1}_n^T(\widetilde{H}^{-1}\lhat') &= -\bm{1}_n^T U_n^{-1}\diag(1/(2\theta_{n-1}), \dots, 1/(2\theta_0)) (1/\theta_n^2)U(\varphi_n, \dots, \varphi_1)^{-1}U(\theta_n, \dots, \theta_1)\lhat' \\
		&= -\frac{1}{2\theta_{n-1}}e_1^T  (1/\theta_n^2)U(\varphi_n, \dots, \varphi_1)^{-1}U(\theta_n, \dots, \theta_1)\lhat'\,.
	\end{align*}
	We know that $(1, j)$ entry of $-(1/\theta_n^2)U(\varphi_n, \dots, \varphi_1)^{-1}U(\theta_n, \dots, \theta_1)\lhat'$ is nonnegative for all $j \neq 2$. Thus it suffices to verify the nonnegativity of the first two entries of $-(1/\theta_n^2)\bm{1}_n^T(\ltilde' + \widetilde{H}^{-1}\lhat')$. The first entry is $\frac{1}{\theta_n^2} - \frac{1}{2\theta_{n-1}^2} + \frac{4}{\theta_{n-1}^3\varphi_n}\left(\theta_n + \frac{\theta_{n-1}}{\varphi_{n-1}}\right) = \frac{1}{2\theta_{n-1}^3}\left(\frac{1}{2\varphi_n}\left(\theta_n + \frac{\theta_{n-1}}{\varphi_{n-1}}\right)-\theta_{n-1} + \frac{2\theta_{n-1}^3}{\theta_n^2}\right)$ where from Lemma \ref{lem:thetaphi},
	\begin{align*}
		\frac{1}{2\varphi_n}\left(\theta_n + \frac{\theta_{n-1}}{\varphi_{n-1}}\right)-\theta_{n-1} + \frac{2\theta_{n-1}^3}{\theta_n^2} &\geq \frac{1}{2+\sqrt{2}}\left(\theta_n + \frac{\theta_{n-1}}{\varphi_{n-1}}\right)-\theta_{n-1} + \frac{(\theta_{n}-1)\theta_{n-1}}{\theta_n} \\
		&\geq \frac{1}{2+\sqrt{2}}\left(\sqrt{2}\theta_{n-1} + \frac{2}{3}\theta_{n-1}\right)-\theta_{n-1} + \frac{1}{2}\theta_{n-1} \geq 0 \,.
	\end{align*}
	The second entry, for $n \geq 3$, is (letting $x' = x'(2)$) $-\frac{1}{4\theta_{n-1}^2\theta_{n-2}^2}x'_1 + \frac{1}{2\theta_{n-1}^2} = \frac{1}{2\theta_{n-1}^2\theta_{n-2}^2}(-\frac{1}{2}x'_1 + \theta_{n-2}^2)$ where
	\begin{align*}
		-\frac{1}{2}x'_{1} + \theta_{n-2}^2 &= -\frac{1}{2\varphi_n}(\theta_n(\theta_{n-1}-1)+2\theta_{n-2}^2 + (\varphi_{n-1}-1)x'_2) + \theta_{n-2}^2 \\
		&\geq - \frac{5}{16}\left(\theta_n(\theta_{n-1}-1)+2(\theta_{n-1}^2 - \theta_{n-1}) - \frac{\varphi_{n-1}-1}{\varphi_{n-1}}(2\theta_{n-1}^2 - \theta_{n-1}+x'_3)\right) + (\theta_{n-1}^2 - \theta_{n-1}) \\
		&\geq -\frac{5}{16}\left(\frac{10}{7}(\theta_{n-1}^2 - 1) + 2(\theta_{n-1}^2 - \theta_{n-1}) - \frac{1}{4}(2\theta_{n-1}^2 - \theta_{n-1}) - \frac{1}{4}x'_3\right)+(\theta_{n-1}^2 - \theta_{n-1}) \\
		&\geq -\frac{5}{16}\left(\frac{10}{7}(\theta_{n-1}^2 - 1) + 2(\theta_{n-1}^2 - \theta_{n-1}) - \frac{1}{4}(2\theta_{n-1}^2 - \theta_{n-1}) - \frac{1}{4}\theta_{n-1}\right)+(\theta_{n-1}^2 - \theta_{n-1}) \\
		&= \frac{1}{112}(9\theta_{n-1}^2 -42\theta_{n-1} + 50) \geq 0\,.
	\end{align*}
	Finally, the second entry for $n = 2$ is $-\frac{1}{4\theta_1^3}y'_1 + \frac{1}{2\theta_1^2} = \frac{1}{2\theta_1^3}\left(-\frac{1}{2}y_1' + \theta_1\right)$ where
	\begin{align*}
		-\frac{1}{2}y_1' + \theta_1 &= -\frac{1}{2\varphi_2}(\theta_2 +2\theta_1 - 2) + \theta_1 \\
		&\geq -\frac{1}{3}(\theta_2 +2\theta_1 - 2) + \theta_1 \\
		&= \frac{1}{3}(\theta_1-\theta_2 + 2) \\
		&\geq \frac{1}{3}\left(\theta_1 - \frac{3}{2}\theta_1 -\frac{1}{2} + 2\right) \geq 0 \,,
	\end{align*}
	where in the second inequality we used $\theta_2 = \frac{1+\sqrt{1+8\theta_1^2}}{2} \leq \frac{1+3\theta_1}{2}$.
	
	For item (ii), we can take $\xi' = 1 - \frac{\lambda'_{n-1, n} + \lambda'_{n, n-1}}{R'_n}$ where $\lambda'_{n-1, n} + \lambda'_{n, n-1} = \theta_n^2\left(\frac{1}{2\theta_0^2} + \frac{1}{2\theta_0^2} - \frac{1}{2\theta_1^2}\right) = \frac{\sqrt{5}+1}{4}\theta_n^2$ for $n \geq 2$. For $n = 1$, $\lambda'_{n-1, n} + \lambda'_{n, n-1} = \frac{1}{\theta_0^2} - \frac{1}{\theta_1^2} = \frac{3}{4}$.
	
\end{proof}

We present the proofs of the technical lemmas within the previous parts as follows.

\begin{proof}[Proof of Lemma \ref{lem:ogmgls1}]
	For notational convenience, let $i := n-j$ (which implies $1 \leq i \leq n-2$).
	\begin{itemize}
		\item[(a)] Since $U(\varphi_n, \dots, \varphi_1)$ is upper triangular, we can solve the equation iteratively starting from $x'_n$, which yields $x'_n = \dots = x'_{j+2} = 0$. Also,
		\begin{align*}
			x'_{j+1} &= \frac{\theta_{i+1}\theta_i}{\varphi_i} > 0, \\
			x'_j &= -\frac{1}{\varphi_{i+1}}(2\theta_{i+1}^2 - \theta_{i+1} + x'_{j+1}) < 0, \\
			x'_{j-1} &= \frac{1}{\varphi_{i+2}}(\theta_{i+2}\theta_{i+1} - \theta_{i+2}-\theta_{i+1} - x'_j - x'_{j+1}) \\
			&= \frac{1}{\varphi_{i+2}}(\theta_{i+2}\theta_{i+1} - \theta_{i+2}-\theta_{i+1} + (2\theta_{i+1}^2 - \theta_{i+1}) + (\varphi_{i+1}-1)x'_j) \\
			&= \frac{1}{\varphi_{i+2}}(\theta_{i+2}(\theta_{i+1}-1)+2\theta_i^2 + (\varphi_{i+1}-1)x'_j),
		\end{align*}
		where from $(\varphi_{i+1}-1)x'_j = -\frac{\varphi_{i+1}-1}{\varphi_{i+1}}(2\theta_{i+1}^2 - \theta_{i+1} + x_{j+1}) > -\frac{1}{2}(2\theta_{i+1}^2-\theta_{i+1} + x'_{j+1}) > -\frac{1}{2}(3\theta_{i+1}^2-\theta_{i+1})$, $x'_{j-1} > \frac{1}{\varphi_{i+2}}\left(\left(\theta_{i+1}+\frac{1}{2}\right)(\theta_{i+1}-1)+2(\theta_{i+1}^2-\theta_{i+1}) - \frac{1}{2}(3\theta_{i+1}^2 - \theta_{i+1})\right) = \frac{1}{2\varphi_{i+2}}(3\theta_{i+1}^2-4\theta_{i+1}-1) > 0$.
		
		\item[(b)] Solving the equation yields
		\[x'_{k} = \frac{1}{\varphi_{n+1-k}}\left(-1-\sum_{l=k+1}^{j+1} x'_l\right), \ k= j-2, \dots, 1\,,   
		\]
		which implies $x'_k = \frac{1}{\varphi_{n+1-k}}(-x'_{k+1} + \varphi_{n-k}x'_{k+1}) = \frac{\varphi_{n-k}-1}{\varphi_{n+1-k}}x'_{k+1}$ for all $1 \leq k \leq j-3$.
		From $x'_{j-2} = \frac{1}{\varphi_{i+3}}(-1-x'_{j-1}-x'_j - x'_{j+1})$, it suffices to show that $x'_{j-1} + x'_j + x'_{j+1} > -1$, which is equivalent to
		\[\frac{1}{\varphi_{i+2}}(\theta_{i+2}\theta_{i+1}-\theta_{i+2}-\theta_{i+1}) - \left(1-\frac{1}{\varphi_{i+2}}\right)\frac{1}{\varphi_{i+1}}(2\theta_{i+1}^2-\theta_{i+1}) + \left(1-\frac{1}{\varphi_{i+2}}\right)\left(1-\frac{1}{\varphi_{i+1}}\right)\frac{1}{\varphi_{i}}\theta_{i+1}\theta_i > -1\,.
		\]
		By Lemma \ref{lem:thetaphi}, $\frac{1}{\varphi_{i}} \geq \frac{2}{3}$ and $\frac{2}{3} \leq \frac{1}{\varphi_{i+1}}, \frac{1}{\varphi_{i+2}} \leq \frac{3}{4}$ (note that here, $i+2 < n$ as $j \geq 3$). Thus the left hand side is lower bounded by
		\begin{align*}
			&\frac{2}{3}(\theta_{i+2}\theta_{i+1}-\theta_{i+2}-\theta_{i+1}) - \left(1-\frac{2}{3}\right)\frac{3}{4}(2\theta_{i+1}^2 - \theta_{i+1}) + \left(1-\frac{3}{4}\right)^2\frac{2}{3}\theta_{i+1}\theta_{i} \\
			&=\frac{1}{24}(16\theta_{i+2}(\theta_{i+1}-1) - 12\theta_{i+1}^2 - 10\theta_{i+1} + \theta_{i+1}\theta_i) \\
			&\geq \frac{1}{24}((16\theta_{i+1}+8)(\theta_{i+1}-1)-12\theta_{i+1}^2 - 10\theta_{i+1} + \theta_i^2) \\
			&= \frac{1}{24}(4\theta_{i+1}^2 - 18\theta_{i+1} + (\theta_{i+1}^2 - \theta_{i+1})) \\ 
			&= \frac{1}{24}(5\theta_{i+1}^2 - 19\theta_{i+1}) > -1\,,
		\end{align*}
		as desired.
		
		\item[(c)] First, note that $x'_{j-1} > 0$ and $x'_j < 0$ from (a). Thus
		\begin{align*}
			-\theta_{i+1}^2 + \frac{1}{2}x'_{j-1} - \frac{\theta_{i+1}}{2\theta_{i}}x'_j &\geq -\theta_{i+1}^2 + \frac{1}{2}(x'_{j-1} - x'_j) \\
			&= -\theta_{i+1}^2 + \frac{1}{2}\frac{1}{\varphi_{i+2}}(\theta_{i+2}(\theta_{i+1}-1)+2\theta_i^2 + (\varphi_{i+1}-1)x'_j) - \frac{1}{2}x'_j \,.
		\end{align*}
		Consider first the case where $i \neq n-2$. Then $\frac{1}{\varphi_{i+2}} \geq \frac{2}{3}$, implying
		\begin{align*}
			&-\theta_{i+1}^2 + \frac{1}{2}\frac{1}{\varphi_{i+2}}(\theta_{i+2}(\theta_{i+1}-1)+2\theta_i^2 + (\varphi_{i+1}-1)x'_j) - \frac{1}{2}x'_j \\
			&\geq -\theta_{i+1}^2 + \frac{1}{3}\left(\left(\theta_{i+1}+\frac{1}{2}\right)(\theta_{i+1}-1)+2(\theta_{i+1}^2 - \theta_{i+1}) + \frac{1}{2}x'_j\right) - \frac{1}{2}x'_j \\
			&= -\frac{5}{6}\theta_{i+1} - \frac{1}{6}-\frac{1}{3}x'_j\,.
		\end{align*}
		Since $-x'_j \geq \frac{2}{3}(2\theta_{i+1}^2 - \theta_{i+1}+x'_{j+1}) \geq \frac{2}{3}\left(2\theta_{i+1}^2-\theta_{i+1} + \frac{2}{3}\theta_{i+1}\theta_i\right) \geq \frac{4}{3}\theta_{i+1}^2$ (from $\theta_i \geq \theta_1 \geq \frac{3}{2}$), the final term is lower bounded by $\frac{4}{9}\theta_{i+1}^2 - \frac{5}{6}\theta_{i+1} - \frac{1}{6} \geq 0$ (from $\theta_{i+1} \geq \theta_2 \geq 1+\frac{\sqrt{5}}{2}$).
		
		If $i = n-2$, then $\frac{1}{\varphi_{i+2}} \geq 2-\sqrt{2}$ and $\theta_{i+2} \geq \sqrt{2}\theta_{i+1}$, implying
		\begin{align*}
			&-\theta_{i+1}^2 + \frac{1}{2}\frac{1}{\varphi_{i+2}}(\theta_{i+2}(\theta_{i+1}-1)+2\theta_i^2 + (\varphi_{i+1}-1)x'_j) - \frac{1}{2}x'_j \\
			&\geq -\theta_{i+1}^2 + \frac{1}{2+\sqrt{2}}\left(\sqrt{2}\theta_{i+1}(\theta_{i+1}-1)+2(\theta_{i+1}^2 -\theta_{i+1}) + \frac{1}{2}x'_j\right) - \frac{1}{2}x'_j \\
			&= -\theta_{i+1} - \frac{1}{2\sqrt{2}}x'_j \\
			&\geq \frac{\sqrt{2}}{3}\theta_{i+1}^2 - \theta_{i+1} \geq 0\,,
		\end{align*}
		where the last inequality is from $\theta_{i+1} \geq \theta_2 = \frac{1+\sqrt{7+2\sqrt{5}}}{2}$.
		
		\item[(d)] First, note that $x'_{j-2} < 0$ and $x'_{j-1} > 0$ from (a) and (b). Thus
		\begin{align*}
			\theta_{i}^2 + \frac{\theta_{i+1}}{2\theta_{i+2}}x'_{j-2} - \frac{1}{2}x'_{j-1} &\geq \theta_i^2 + \frac{1}{2}(x'_{j-2} - x'_{j-1}) \\
			&=\theta_i^2 + \frac{1}{2\varphi_{i+3}}(-1-x'_{j-1}-x'_j - x'_{j+1}) - \frac{1}{2}x'_{j-1}\,.
		\end{align*}
		From Lemma \ref{lem:thetaphi}, this is lower bounded by
		\begin{align*}
			&\theta_i^2 + \frac{5}{14}(-1-x'_{j-1}-x'_j - x'_{j+1})-\frac{1}{2}x'_{j-1}\\
			&= \theta_i^2 - \frac{5}{14}-\frac{6}{7}x'_{j-1} - \frac{5}{14}(\theta_{i+2}\theta_{i+1}-\theta_{i+2}-\theta_{i+1} - \varphi_{i+2}x'_{j-1}) \\
			&= \theta_i^2 - \frac{5}{14}(\theta_{i+2}-1)(\theta_{i+1}-1) + \frac{5\varphi_{i+2}-12}{14}x'_{j-1} \\
			&\geq \theta_i^2 - \frac{5}{14}(\theta_{i+2}-1)(\theta_{i+1}-1) - \frac{19}{70}(\theta_{i+2}(\theta_{i+1}-1)+2\theta_i^2 + (\varphi_{i+1}-1)x'_j) \\
			&= \frac{16}{35}\theta_i^2 - \frac{5}{14}(\theta_{i+2}-1)(\theta_{i+1}-1) - \frac{19}{70}\theta_{i+2}(\theta_{i+1}-1) + \frac{19}{70}\frac{\varphi_{i+1}-1}{\varphi_{i+1}}(2\theta_{i+1}^2 - \theta_{i+1}+x'_{j+1}) \\
			&\geq \frac{16}{35}\theta_i^2 - \frac{5}{14}(\theta_{i+2}-1)(\theta_{i+1}-1) - \frac{19}{70}\theta_{i+2}(\theta_{i+1}-1) + \frac{19}{280}\left(2\theta_{i+1}^2 - \theta_{i+1}+\frac{2}{3}\theta_i^2\right) \\
			&\geq \frac{16}{35}\theta_i^2 - \frac{5}{14}(\theta_{i+1}-\frac{1}{4})(\theta_{i+1}-1) - \frac{19}{70}(\theta_{i+1} + \frac{3}{4})(\theta_{i+1}-1) + \frac{19}{280}\left(2\theta_{i+1}^2 - \theta_{i+1} + \frac{2}{3}(\theta_{i+1}^2 - \theta_{i+1})\right) \\
			&\geq \frac{1}{105}(\theta_{i+1}^2 - 6\theta_{i+1} + 12) \geq 0\,.
		\end{align*}
	\end{itemize}
\end{proof}

\begin{proof}[Proof of Lemma \ref{lem:ogmgls2}]
	\begin{itemize}
		\item[(a)] The formulae for $y'_n, y'_{n-1}, y'_{n-2}$ can be derived straightforwardly by solving the equation, with $\theta_1^2 = \theta_1 + 1$ and $\varphi_1 = \frac{3+\sqrt{5}}{4} = \frac{\theta_1^2}{2}$. The only nontrivial inequality is $y'_{n-2} < 0$, which holds from $\theta_2 - 1 - (\varphi_2 - 1)y'_{n-1} = \theta_2-1-\left(1-\frac{1}{\varphi_2}\right)(\theta_2 + 2\theta_1-2) \geq \theta_2 - 1 - \frac{1}{3}(\theta_2+2\theta_1-2) = \frac{2}{3}\left(\theta_2 - \theta_1 - \frac{1}{2}\right) > 0$.
		\item[(b)] The proof follows as in Lemma \ref{lem:ogmgls1} (b).
	\end{itemize}
\end{proof}

\end{document}